\newcolumntype{L}[1]{>{\raggedright\let\newline\\\arraybackslash\hspace{0pt}}m{#1}}
\newcolumntype{C}[1]{>{\centering\let\newline\\\arraybackslash\hspace{0pt}}m{#1}}
\newcolumntype{R}[1]{>{\raggedleft\let\newline\\\arraybackslash\hspace{0pt}}m{#1}}
\theoremstyle{plain}
\newtheorem{theorem}{\protect Theorem}[section]
\newtheorem{prop}[theorem]{\protect Proposition}
\newtheorem{definition}[theorem]{\protect Definition}
\newtheorem{lemma}[theorem]{\protect Lemma}
\newtheorem{remark}[theorem]{\protect Remark}
\newtheorem{ass}{\protect Assumption}
\newtheorem{corollary}[theorem]{\protect Corollary}
\def\d{\mathrm{d}}
\def\ie{\mathrm{i.e.}}
\def\as{\mathrm{a.s.}}
\def\RV{\mathrm{r.v.}}
\newcommand{\comp}{\mathrm{comp}}
\newcommand{\R}{\mathbb{R}}
\newcommand{\E}{\mathbb{E}}
\newcommand{\T}{\top}
\newcommand{\C}{\mathcal{C}}
\newcommand{\F}{\mathcal{F}}
\newcommand{\Fb}{\mathbb{F}}
\newcommand{\Pb}{\mathbb{P}}
\newcommand{\Pc}{\mathcal{P}}
\newcommand{\tr}{\mathrm{tr}}
\newcommand{\Law}{\mathcal{L}}
\newcommand{\dist}{\mathrm{dist}}
\newcommand{\Gr}{\mathrm{Gr}}
\newcommand{\pa}{\partial}
\newcommand{\Mc}{\mathcal{M}}
\newcommand{\U}{\mathscr{U}}
\newcommand{\tb}{\tilde b}
\newcommand{\ts}{\tilde\sigma}
\newcommand{\tf}{\tilde f}
\newcommand{\assref}[1]{\hyperref[#1]{Assumption \ref*{#1}}}
\newcommand{\thmref}[1]{\hyperref[#1]{Theorem \ref*{#1}}}
\newcommand{\propref}[1]{\hyperref[#1]{Proposition \ref*{#1}}}
\newcommand{\remref}[1]{\hyperref[#1]{Remark \ref*{#1}}}
\newcommand{\lemref}[1]{\hyperref[#1]{Lemma \ref*{#1}}}
\newcommand{\defref}[1]{\hyperref[#1]{Definition \ref*{#1}}}
\newcommand{\corref}[1]{\hyperref[#1]{Corollary \ref*{#1}}}
\newcommand{\equref}[1]{\hyperref[#1]{(\ref*{#1})}}
\newcommand{\exaref}[1]{\hyperref[#1]{Example \ref*{#1}}}
\title{Mean Field Game of Controls with State Reflections: Existence and Limit Theory}
\author{Lijun Bo \thanks{Email: lijunbo@ustc.edu.cn, School of Mathematics and Statistics, Xidian University, Xi'an, 710126, China.}
	\and
	Jingfei Wang \thanks{Email:wjf2104296@mail.ustc.edu.cn, School of Mathematical Sciences, University of Science and Technology of China, Hefei, 230026, China.}
	\and
	Xiang Yu \thanks{Email: xiang.yu@polyu.edu.hk, Department of Applied Mathematics, The Hong Kong Polytechnic University, Kowloon, Hong Kong, China.}
}
\date{ }
\begin{document}
	
	\maketitle
	
	\vspace{-0.2in}
	\begin{abstract}
		This paper studies mean field game (MFG) of controls by featuring the state-control joint distribution and the reflected state process at an exogenous stochastic reflection boundary. We contribute to the literature with a customized relaxed formulation and some new compactification arguments to establish the existence of a Markovian mean field equilibrium (MFE) in the weak sense. We consider an enlarged canonical space, utilizing the dynamic Skorokhod mapping, to accommodate the stochastic reflection boundary process. A fixed-point argument on the extended space using an extension transformation technique is developed to tackle challenges from the joint measure flow of the state and the relaxed control that may not be continuous. Furthermore, the bidirectional connections between the MFG and the $N$-player game are  established. We first show that any weak limit of empirical measures induced by $\boldsymbol{\epsilon}$-Nash equilibria in $N$-player games must be supported exclusively on the set of relaxed mean field equilibria, analogous to the propagation of chaos in mean field control problems. We then prove the convergence result that a Markovian MFE in the weak sense can be approximated by a sequence of constructed $\boldsymbol{\epsilon}$-Nash equilibria in the weak sense in $N$-player games when $N$ tends to infinity.\\		
		\ \\
		\textbf{Keywords}: Mean field game of controls, reflected state process, compactification method,  mean field equilibrium in the weak sense, limit theory, approximate Nash equilibrium in the weak sense.\\
		\ \\
		\textbf{MSC2020}: 49N80, 60H30, 93E20, 60G07
	\end{abstract}
	
\vspace{0.2in}	
\section{Introduction}
Decision making for a large population of interacting agents has wide applications in finance, energy management, epidemic control, traffic flow, and among others. To overcome the coupled influence in decision making by all agents and the curse of dimensionality, the mean field game (MFG) theory has been proposed to effectively approximate the rational behavior of agents by assuming that each agent interacts with the aggregated effect of the population. The pioneer studies in Lasry and Lions~\cite{Lasry07} and Huang et al.~\cite{Huang06} provided insightful ways to study the approximate Nash equilibria for large stochastic systems using the more tractable mean field equilibrium solution, which spurred fast-growing developments in MFG theory and applications in the past decade. 
	
In classical MFG problems, the mean field interaction is typically encoded by the distribution of the state process. The problem then consists of solving an optimal control problem for a representative agent and finding a fixed point as the consistency condition of the state distribution flow. The first core question in MFG theory is to investigate the existence of mean field equilibrium (MFE). Several approaches have been proposed and developed in different context in the literature. In early groundbreaking works initiated by Lasry and Lions \cite{Lions,Lasry07}, the HJB equation is employed to describe the evolution of the value function of the representative agent, while the Fokker-Planck equation models the evolution of the distribution of the state variables across the population. This method of coupled PDE system also allows for various local mean field interactions, see among \cite{Cardaliaguet, Gomes, Gomes1}. Later, a probabilistic approach based on Pontryagin maximum principle is proposed for MFG problems by examining the forward-backward stochastic differential equations (FBSDEs), where the forward equation governs the evolution of the state process and the backward equation corresponds to the adjoint process coupled via the law of the forward state process. For example, Bensoussan et al. \cite{Bensoussan} studied this approach in the linear-quadratic case.  Carmona and Delarue \cite{Carmona0} advanced this method to a more general setting. Apart from the previous two mainstream methods, Lacker \cite{Lacker} proposed a probabilistic compactification method, originating from the pioneer work of relaxed control formulation in El Karoui et al. \cite{Karoui1} and Haussmann and Lepeltier \cite{Haussmann}, to establish the existence of the Markovian MFE in a general setup. The compactification arguments tackle the law of the controlled system directly and allow for non-unique optimal controls by utilizing a set-valued fixed-point theorem (such as Kakutani’s fixed-point theorem). Recently, the compactification method has been refined as a powerful tool to analyze the existence of MFE for MFG problems in various contexts, including MFG with common noise in Carmona et al. \cite{Carmona}; MFG with controlled jumps in Benazzoli et al. \cite{Benazzoli}; MFG with absorption in Campi and Fisher \cite{Campi}; MFG with past absorptions and surviving players in Campi et al. \cite{Campi1}; MFG with finite states in Cecchin and Fisher \cite{Cecchin}; MFG with singular controls in Fu and Horst \cite{Fu}, and among others.
	
Another important question in MFG theory is to discuss the connections between the $N$-player game and the MFG problem via the so-called limit theory such as can one identify all possible limits from the $N$-player approximate Nash equilibria as $N$ tends to infinity, and can one use a MFE to construct an approximate $N$-player Nash equilibria? An increasing number of efforts can be found in the literature concerning the limit theory. For instance, the forward direction of convergence has been studied by \cite{Feleqi, Lasry07} in restrictive cases and was later investigated comprehensively in \cite{Fischer,Lakcer3}, primarily focusing on the convergence of open-loop Nash equilibria. Cardaliaguet et al. \cite{Cardaliaguet} made an important breakthrough to discuss the convergence of closed-loop equilibria on the strength of the master equation, which requires the uniqueness of the MFE. Lacker \cite{Lacker4} established the limit theory for closed-loop equilibria in a weak formulation using the Girsanov transform. Djete \cite{Djete1} introduced the concept of measure-valued MFG solutions and proved the convergence of open-loop Nash equilibria to a measure-valued MFG solution in the framework of MFG of control with common noise. It is worth noting that the framework considered in Jackson and Tangpi \cite{JTangpi24} concerns open-loop Nash equilibria. They adopted the maximum principle to recast the convergence problem as a question of forward-backward propagation of chaos and obtained the convergence result for MFG with common noise and controlled volatility under the displacement monotonicity condition. The reverse direction of connection has also been extensively studied in the literature. To name a few, Carmona and Delarue \cite{Carmona0} established the approximating procedure by employing an FBSDE approach.  Benazzoli et al. \cite{Benazzoli1} constructed the approximate open-loop Nash equilibria for jump diffusion dynamics. Carmona and Lacker \cite{Carmona1} examined the approximate Nash-equilibira in the framework of MFG of controls, where the law of the control appears in the running cost function. Djete \cite{Djete1} constructed the approximate Nash equilibria using a measure-valued MFG solution in the presence of common noise.
	
	In the present paper, we aim to contribute some new theoretical advances to the compactification method and limit theory for
	MFG problems when mean field interactions appear via the joint distribution of the state and the control as well as the state process exhibits reflections along some stochastic boundary.

\subsection{Related literature}
	
Compared with classical MFG problems, MFG of controls with mean-field interactions depending on the joint distribution of the state and the control are relatively under-explored. Existing studies along this direction mainly focus on the existence and uniqueness results. Some restrictive model assumptions such as convexity and continuity are often imposed to facilitate the proofs. For instance, \cite{Cardaliaguet2, Kobeissi, Bonnans} handled the existence of MFEs by employing the PDE approach as in classical MFG problems. Achdou and Kobeissi \cite{Achdou} developed some numerical approximations via finite difference methods for the PDE system arising from the MFG of controls. The mean field FBSDE approach was developed in Carmona and Lacker \cite{Carmona1} for uncontrolled and non-degenerate volatility when the mean field term is depicted via two marginal laws of the state and control. A similar approach was used by Graber \cite{Graber} in a linear-quadratic framework. Djete \cite{Djete1} focused on the limit theory for MFG of controls with common noise using their tailor-made measure-valued MFG solution. It is worth noting that some techniques in previous studies in the presence of marginal laws of the state and the control under the separated structure are not applicable to the general joint law of the state and the control in the present paper (see \eqref{strict_SDE} and \eqref{cost_func_strict}). In particular, how to utilize the compactification method to cope with the joint law dependence calls for new and careful investigations.  

Another new feature in the present work is to allow the state process to reflect along an exogenous stochastic boundary described by a continuous adapted process. Stochastic control and stochastic differential games with reflected diffusions have attracted increasing attention in recent studies. To name a few,  Borkar and Budhiraja \cite{Borkar05} examined the ergodic drift control of reflected diffusions; Han et al.~\cite{Han16} concerned the optimal pricing barrier control in a regulated financial market system modelled by a reflected diffusion process;  Ferrari \cite{Fer19} handled a class of singular control problems for a reflected diffusion process at boundary zero; Bo  et al. \cite{BoLiaoYu21, BHY23b, BHY25} studied the stochastic control problems of reflected diffusion with applications in optimal benchmark tracking portfolio using the capital injection; Bo et al. \cite{BHYEJP24} proposed a decomposition-homogenization technique for Robin boundary PDE problems arising from stochastic control of state process in nonnegative orthant; Pradhan \cite{Prad21} solved a risk-sensitive ergodic control problem for reflected diffusions in nonnegative orthant; Ghosh and Kumar \cite{Gh02} tackled a zero-sum stochastic differential game in nonnegative orthant. Recently, Bayraktar et al. \cite{Bayraktar2019} formulated a MFG problem with state reflections where only the drift is under control in the context of large symmetric queuing systems in heavy traffic with strategic servers. For the existence of MFE, the HJB equation with Neumann boundary conditions, together with some fixed point arguments, plays the key role in  \cite{Bayraktar2019}. Ricciardi \cite{Ricciardi2023} discussed the convergence of Nash equilibrium in $N$-player games towards the MFE in the MFG with drift control and state reflections using the approach of the master equation with Neumann boundary condition. For general MFG of controls with joint law dependence in the context with reflected state process, the existence of MFE and the limit theory remain interesting open problems.

\subsection{Our contributions}
	
The first theoretical contribution of the present paper is to prove, for the first time, the existence of a Markovian MFE for MFG of controls  using our customized compactification method (see \thmref{existence_RMFE} and \corref{existence_Markovian}). A well-known challenge in the compactification method arises from the relaxed formulation when the mean-field term is lifted to the joint distribution of the state $X$ and the relaxed control $\Lambda$. Mathematically, we need to cope with $\Pc_2(\R\times\Pc(U))$ instead of $\Pc_2(\R\times U)$ in compactification arguments. Another challenge is the discontinuity of the joint law measure flow of the state and the control. Indeed, recall that in the classical MFG problem, for any $\boldsymbol{\mu}=(\mu_t)_{t\in [0,T]}\in C([0,T];\Pc_2(\R))$, the state process is given by
\begin{align*}
\d X_t^{\boldsymbol{\mu}}=b(t,X_t^{\boldsymbol{\mu}},\mu_t,\alpha_t)\d t+\sigma(t,X_t^{\boldsymbol{\mu}},\mu_t,\alpha_t)\d W_t.
\end{align*}
Note that $\boldsymbol{\mu}^n\to\boldsymbol{\mu}$ in $C([0,T];{\cal P}_2(\R))$  as $n\to\infty$ is equivalent to $\sup_{t\in [0,T]}\mathcal{W}_{2,\R}(\mu_t^n,\mu_t)\to 0$ as $n\to\infty$, which indicates that, under some regular conditions on $b$ and $\sigma$, 
\begin{align*}
\lim_{n\to\infty}\sup_{t\in [0,T]}\E\left[\left|X_t^{\boldsymbol{\mu}^n}-X_t^{\boldsymbol{\mu}}\right|^2\right]=0.
\end{align*}
However, in the MFG of controls, the state process is governed by
\begin{align*}
\d X_t^{\boldsymbol{\rho}}=b(t,X_t^{\boldsymbol{\rho}},\rho_t,\alpha_t)\d t+\sigma(t,X_t^{\boldsymbol{\rho}},\rho_t,\alpha_t)\d W_t,
\end{align*}
where $\boldsymbol{\rho}$ is the joint law of $(X,\alpha)=(X_t,\alpha_t)_{t\in[0,T]}$ with $\rho_t$ being its $t$-marginal distribution. Due to the possible discontinuity of the control process $\alpha=(\alpha_t)_{t\in[0,T]}$, the convergence $\boldsymbol{\rho}^n\to\boldsymbol{\rho}$ in $\Pc_2(\C\times\mathcal{B})$ (see \equref{CB_space} for the definition of $\C\times\mathcal{B}$) is strictly weaker than $\sup_{t\in [0,T]}\mathcal{W}_{2,\R\times \R^l}(\rho_t^n,\rho_t)\to 0$ as $n\to\infty$, which in turn poses a challenge in showing 
\begin{align*}
\lim_{n\to\infty}\sup_{t\in [0,T]}\E\left[\left|X_t^{\boldsymbol{\rho}^n}-X_t^{\boldsymbol{\rho}}\right|^2\right]=0.
\end{align*}
This issue caused by joint law dependence also poses additional complexities in establishing the limit theory within the relaxed control framework. In response to this challenge, we adopt the extension transformation (see \equref{eq:extension}) introduced in \cite{BWWY} together with a separation condition (see (A1) in \assref{ass1} and \remref{ass1_remark}).  We consider the joint distribution of entire trajectories of the state and control processes and establish the fixed point result directly on the extended space. As a price to pay, our method brings new difficulties in compactification arguments using relaxed control formulation in the weak sense. Some additional efforts are needed to conclude the fixed point result on the extended space, and the proof of \thmref{existence_RMFE} differs substantially from the one in Lacker \cite{Lacker} for the classical MFG (see \lemref{closed} and \lemref{compact_set}).
	
The second theoretical contribution is our incorporation of state reflections in the MFG problem (see \eqref{strict_SDE} and \eqref{eq:reflectedterm}). By using a proper Skorokhod mapping, we reveal that the problem can be reformulated to allow us to apply the techniques for MFG of controls without state reflections. For ease the presentation, we focus on the simple case where the state process $X=(X_t)_{t\in[0,T]}$ reflects upon reaching a given random boundary $A=(A_t)_{t\in[0,T]}$ as a continuous stochastic process (see \equref{strict_SDE}). In particular, to address the additional reflection behavior and stochastic boundary, we introduce a flexible formulation that expands the canonical space by incorporating the canonical space of the process $A$. With the help of the Skorokhod mapping, reflections can be encoded in the additional dimension in the canonical process, leaving us to primarily deal with the joint law issues in the compactification method and the limit theory. Our approach in the present paper provides a convenient and novel way to handle general state reflections in the MFG framework, which may motivate some related future studies.		
	
As the third theoretical contribution, we introduce for the first time the notion of Nash equilibrium in the weak sense (\defref{NE_weak}) to study the limit theory for MFG of controls, in which the equilibrium is defined through distributions on the canonical space, without specifying the underlying probability space. The main challenge in this definition is to characterize, in terms of distributions, how each decision-maker optimally chooses her strategy when all other agents' strategies remain fixed (\defref{RsiN}). We then establish two key limit theorems. Firstly, we prove that any weak limit of the empirical measures induced by $\boldsymbol{\epsilon}$-Nash equilibria in the weak sense must be supported exclusively on the set of relaxed mean field equilibria (\thmref{propagation}). This is an analogue of the propagation of chaos result in the mean field control theory, akin to Theorem 2.11 in Lacker \cite{Lacker1}. In the proof of \thmref{propagation}, we propose the notion of the extended relaxed control (see \defref{extended_relaxed_control}), in which the probability measure space $\Pc_2(\C\times\mathcal{Q})$ is incorporated in the canonical space. We additionally establish the relationship between such extended relaxed control and the (classical) relaxed control introduced in \defref{relaxed_control} (see \lemref{relation_value}), which, to the best of our knowledge, is novel in the literature.  Secondly, we prove that  if a Markovian MFE exists, we can always construct a sequence of $\boldsymbol{\epsilon}$-Nash equilibria in the weak sense in $N$-player games that converges to this MFE (\thmref{convergence_theorem}), thereby fulfilling the converse direction of the limit theory. Our results establish a bidirectional relationship between the $N$-player Nash equilibria in the weak sense and their mean field counterpart in the framework of MFG of controls, which complements the existing limit theory in the strong sense by Djete \cite{Djete1} using the measure-valued MFE.  Furthermore, we also highlight the difference between our results and the limit theory in the uncontrolled diffusion framework discussed in Lacker \cite{Lakcer3} where the mean of $N$-player laws $\frac1N\sum_{i=1}^N \Pb_n\circ (\xi^i,B,W^i,\hat{\mu}[\Lambda^n],\Lambda^{n,i},X^i[\Lambda^n])^{-1}$
is considered. In contrast, we focus on the convergence of the corresponding empirical measure $\frac1N\sum_{i=1}^N\delta_{(X^i,\Lambda^i,W^i,A^i)}$ within the framework of controlled diffusion with reflection,  
which requires some distinct proofs. Compared with the mean of laws, considering the empirical mean offers several advantages: (i) it is naturally observable and implementable within the $N$-player framework, as each agent interacts through the realized average of the population rather than through its abstract distribution; (ii) it is computationally tractable, providing a practical foundation for numerical approximation via particle system simulations; (iii) it allows a more intuitive representation of the aggregate population behavior, facilitating the interpretation and derivation of the mean field limit equations.

The rest of the paper is organized as follows. Section \ref{sec:formulation} introduces the problem formulation of MFG of controls with a stochastic reflection boundary and the dynamic Skorokhod representation. In Section \ref{sec:formulation-MFG}, we first consider the extension transformation and introduce the resulting relaxed control formulation and definitions of relaxed MFE and strict MFE in the weak sense in the mean field model. We then introduce the problem formulation of the $N$-player game and the $\epsilon$-Nash equilibrium in the weak sense. Section \ref{sec:existence} presents the main results of this paper including (i) the existence of a relaxed MFE and a Markovian strict MFE in the weak sense; (ii) the limit theory as the bidirectional convergent results between the approximate $N$-player $\epsilon$-Nash equilibria in the weak sense and mean field equilibria in the weak sense. Section \ref{sec:proofs} collects technical proofs of the main theorems and auxiliary results in previous sections.

	\vspace{0.3in}
	\noindent{\bf Notations.}\quad We list below some notations that will be used frequently throughout the paper:
\vspace{-0.2in}
\begin{center}
\begin{longtable}{l l}
$|\cdot|$ & Euclidean norm on $\R^n$\\
$\delta_x$ & The Dirac measure at $x$.\\
$L^p((A,\mathscr{B}(A),\lambda_A);E)$ & Set of $L^p$-integrable $E$-valued mapping defined on $(A,\mathscr{B}(A))$\\ 
& we write $L^p(A;E)$ for short\\
${\tt m}$ & The Lebesgue measure over $[0,T]$.\\
$\Law^P(\eta)$ & Law of r.v. $\eta$ under probability measure $P$\\
$\Pc_p(E)$ & Set of probability measures on $E$ with finite $p$-order moments\\
$\mathcal{W}_{p,E}$ & The $p$-Wasserstein metric on $\Pc_p(E)$\\
$\E^{P}[\cdot]$ & Expectation operator under probability measure $P$\\
$\Mc(U)$ & Set of signed Radon measures on $U$\\
$\C=C([0,T];E)$ & Set of $E$-valued continuous functions on $[0,T]$\\
$\|\cdot\|_{\infty}$ & The supremum norm of $\C$ \\
$R(\boldsymbol{\xi})$ ($R^{\rm s}(\boldsymbol{\xi})$) & Set of admissible relaxed (strict) controls (see Def. \ref{relaxed_control})\\
$R_N$ ($R_N^{\rm s}$) & Set of $N$-player relaxed (strict) controls (see Def. \ref{relaxed_strategy})\\
$R_{\rm e}(\Theta)$ ($R_{\rm e}^{\rm s}(\boldsymbol{\xi})$) & Set of extended admissible relaxed (strict) controls (see Def. \ref{extended_relaxed_control})
\end{longtable}
\end{center}

\section{Problem Formulation of MFG with State Reflections}\label{sec:formulation}
	
We formulate the problem of MFG of controls with state reflections, where the mean-field interaction is encoded via the joint law of the state-control. 
    
Let $T>0$ be a finite horizon and $(\Omega,\F,\Fb,\Pb)$ be a filtered  probability space with the filtration $\Fb=(\F_t)_{t\in[0,T]}$ satisfying the usual conditions. For $l\in\mathbb{N}$ and $p>2$, let $W=(W_t)_{t\in[0,T]}$ be a standard scalar $\Pb$-Brownian motion and $U\subset\R^l$ be a compact subset as the policy space.  
Let $A=(A_t)_{t\in [0,T]}$ be a continuous and adapted process which will be viewed as our reflection boundary (or the benchmark process) satisfying $\E[\sup_{t\in[0,T]}|A_t|^p]<\infty$ and  $A_0\leq \eta$ with $\eta\in L^2((\Omega,\F_0,\Pb);\R)$. We also define 
\begin{align}\label{hatP}
    \hat{P}:=\Pb\circ (W,A)^{-1}
\end{align}
 by the joint law of $(W,A)$. 
    
Let $\Fb=(\F_t)_{t\in [0,T]}$ be the enlarged natural filtration generated by $(W,A)$ so that $\Fb$ satisfies the usual condition. Denote by $\U[0,T]$ the set of admissible controls which are $\Fb$-progressively measurable processes. We then set $\mathcal{B}:=
L^2([0,T];U)$ which is also equivalent to $L^{\infty}([0,T];U)$ due to the compactness of the policy space $U$. Moreover, we endow $\mathcal{B}$ with the $L^2$-metric defined by $d_{\mathcal{B}}(\beta^1,\beta^2):=(\int_0^T|\beta^1_t-\beta^2_t|^{2}\d t)^{\frac12}$ for $\beta^1,\beta^2\in\mathcal{B}$.
Then, we equip $\C\times\mathcal{B}$ with the product metric given by, for $(C^1,\beta^1),(C^2,\beta^2)\in\C\times\mathcal{B}$, 
\begin{align}\label{CB_space}
d_{\C\times\mathcal{B}}((C^1,\beta^1),(C^2,\beta^2)):=\left\|C^1-C^2\right\|_{\infty}+d_{\mathcal{B}}(\beta^1,\beta^2),\quad\forall (C^1,\beta^1).
\end{align}
Consequently, we have the following lemma whose proof is reported in subsection \ref{appendix}.
\begin{lemma}\label{CB_property} 
For $\boldsymbol{\rho}^1,\boldsymbol{\rho}^2\in\Pc_2(\C\times\mathcal{B})$ with $\rho_t^1,\rho_t^2$ being their $t$-marginal distributions, there exists a constant $C>0$ only depending on $T$ such that\begin{align}
			\int_0^T\mathcal{W}_{2,\R\times U}(\rho_t^1,\rho_t^2)^2\d t\leq C\mathcal{W}_{2,\C\times\mathcal{B}}(\boldsymbol{\rho}^1,\boldsymbol{\rho}^2)^2.
		\end{align}
	\end{lemma}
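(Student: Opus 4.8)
The plan is to transfer an optimal coupling between the full trajectory laws $\boldsymbol{\rho}^1,\boldsymbol{\rho}^2$ down to each time-$t$ marginal pair $(\rho_t^1,\rho_t^2)$, and then integrate in $t$, exploiting that the sup-norm on $\C$ controls the pointwise evaluation $C\mapsto C_t$ while the metric $d_{\mathcal{B}}$ is itself the time-integral of pointwise discrepancies. Since $(\C\times\mathcal{B},d_{\C\times\mathcal{B}})$ is Polish, I first fix an optimal coupling $\pi\in\Pc((\C\times\mathcal{B})^2)$ of $\boldsymbol{\rho}^1$ and $\boldsymbol{\rho}^2$, so that, writing a generic point as $\omega=((C^1,\beta^1),(C^2,\beta^2))$,
\[
\mathcal{W}_{2,\C\times\mathcal{B}}(\boldsymbol{\rho}^1,\boldsymbol{\rho}^2)^2=\int d_{\C\times\mathcal{B}}\big((C^1,\beta^1),(C^2,\beta^2)\big)^2\,\pi(\d\omega).
\]

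Next, for (a.e.) $t\in[0,T]$ I consider the evaluation map $e_t(C,\beta):=(C_t,\beta_t)\in\R\times U$, so that $\rho_t^i=\boldsymbol{\rho}^i\circ e_t^{-1}$. Pushing $\pi$ forward by $(e_t,e_t)$ produces a coupling of $\rho_t^1$ and $\rho_t^2$; since $\mathcal{W}_{2,\R\times U}$ is an infimum over couplings,
\[
\mathcal{W}_{2,\R\times U}(\rho_t^1,\rho_t^2)^2\leq\int d_{\R\times U}\big((C^1_t,\beta^1_t),(C^2_t,\beta^2_t)\big)^2\,\pi(\d\omega).
\]
As $U\subset\R^l$ is endowed with a Euclidean product metric, there is a universal constant $C_0$ with $d_{\R\times U}((x^1,u^1),(x^2,u^2))^2\leq C_0(|x^1-x^2|^2+|u^1-u^2|^2)$. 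Integrating the previous display over $t$ and exchanging $\int_0^T\d t$ with $\int\pi(\d\omega)$ by Fubini, I obtain
\[
\int_0^T\mathcal{W}_{2,\R\times U}(\rho_t^1,\rho_t^2)^2\,\d t\leq C_0\int\Big(\int_0^T|C^1_t-C^2_t|^2\,\d t+\int_0^T|\beta^1_t-\beta^2_t|^2\,\d t\Big)\pi(\d\omega).
\]

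To finish, I bound $\int_0^T|C^1_t-C^2_t|^2\,\d t\leq T\|C^1-C^2\|_\infty^2$ and recognize $\int_0^T|\beta^1_t-\beta^2_t|^2\,\d t=d_{\mathcal{B}}(\beta^1,\beta^2)^2$. Applying $a^2+b^2\leq(a+b)^2$ for $a,b\geq0$ with $a=\|C^1-C^2\|_\infty$ and $b=d_{\mathcal{B}}(\beta^1,\beta^2)$ gives the integrand-wise estimate $T\|C^1-C^2\|_\infty^2+d_{\mathcal{B}}(\beta^1,\beta^2)^2\leq\max(T,1)\,d_{\C\times\mathcal{B}}((C^1,\beta^1),(C^2,\beta^2))^2$. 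Integrating against $\pi$ and recalling the optimality of $\pi$ then yields the claim with $C=C_0\max(T,1)$, which depends only on $T$.

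The genuinely delicate point is not this chain of inequalities but the measure-theoretic bookkeeping around the evaluation on $\mathcal{B}=L^2([0,T];U)$: its elements are equivalence classes, so $\beta_t$, hence $e_t$ and $\rho_t^i$, are only defined for a.e. $t$. I would justify, via Fubini applied to $(t,\omega)\mapsto|\beta^i_t|$, that $\rho_t^i$ is well-defined for a.e. $t$, that $t\mapsto\mathcal{W}_{2,\R\times U}(\rho_t^1,\rho_t^2)$ is measurable, and that the joint measurability required to exchange the $t$- and $\omega$-integrals holds; since every object is integrated in $t$, defining them only for a.e. $t$ suffices. The remaining ingredients—the bound of $\mathcal{W}_2$ by the cost of any admissible coupling, the sup-norm domination on $\C$, and $a^2+b^2\le(a+b)^2$—are routine.
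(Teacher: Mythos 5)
Your proposal is correct and follows essentially the same route as the paper: fix an optimal coupling of $\boldsymbol{\rho}^1,\boldsymbol{\rho}^2$, push it forward through the time-$t$ evaluation to get an admissible (generally suboptimal) coupling of the marginals, bound the resulting cost by $T\|C^1-C^2\|_\infty^2+d_{\mathcal{B}}(\beta^1,\beta^2)^2$ via Fubini, and absorb this into $(T\vee1)\,d_{\C\times\mathcal{B}}^2$. The only differences are cosmetic (the paper phrases the coupling through random variables rather than a measure $\pi$, and it does not spell out the a.e.-$t$ measurability point for $L^2$ equivalence classes, which you rightly flag but which is harmless here).
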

	
Let $(b,\sigma):[0,T]\times\R\times\Pc_2(\R\times U)\times U\mapsto\R^2$ be measurable mappings. For any $\boldsymbol{\rho}\in\Pc_2(\C\times\mathcal{B})$ with the induced measure flow $(\rho_t)_{t\in [0,T]}$ such that $\rho_t$ stands for the marginal distribution at time $t$ and an admissible control $\alpha=(\alpha_t)_{t\in [0,T]}\in\U[0,T]$, we consider the following controlled state process with reflection given by
\begin{align}\label{strict_SDE}
\d X_t^{\alpha,\boldsymbol{\rho}}=b(t,X_t^{\alpha,\boldsymbol{\rho}},\rho_t,\alpha_t)\d t+\sigma(t,X_t^{\alpha,\boldsymbol{\rho}},\rho_t,\alpha_t)\d W_t+\d R^{A}_t,\quad X_0^{\alpha,\rho}=\eta,
\end{align}
where $R^A=(R_t^A)_{t\in [0,T]}$ is an $\Fb$-adapted non-decreasing continuous process with $R_0^A=0$ such that $\Pb$-$\as$, $X_t^{\alpha,\boldsymbol{\rho}}\geq A_t$ for all $t\in [0,T]$ and $\mathbb{P}$-a.s. 
\begin{align}\label{eq:reflectedterm}
\int_0^T \mathbf{1}_{\{X_t^{\alpha,\boldsymbol{\rho}}>A_t\}}\d R_t^{A}=0.  
\end{align}
	
Let $\mu_T$ be the first marginal distribution of $\rho_T$. We consider the following cost functional that 
\begin{align}\label{cost_func_strict}
J(\alpha,\boldsymbol{\rho}):=\E\left[\int_0^Tf(t,X_t^{\alpha,\boldsymbol{\rho}},\rho_t,\alpha_t)\d t+\int_0^T c(t,X_t^{\alpha,\boldsymbol{\rho}})\d R_t^A+g(X_T,\mu_T)\right],
	\end{align}
where $f:[0,T]\times\R\times\Pc_2(\R\times U)\times U\mapsto\R$ is the running cost function, $c:[0,T]\times \R\mapsto\R$ is the cost function associated to the reflection, and $g:\R\times\Pc_2(\R)\mapsto\R$ is the terminal cost function.
	
The strict mean field equilibrium (MFE) in the strong sense is defined as below.
\begin{definition}[Strict MFE in the strong sense]\label{strong_MFE}
A couple $(\boldsymbol{\rho}^*,\alpha^*)\in\mathcal{P}_2(\C\times\mathcal{B})\times\U[0,T]$ is said to be a strict MFE in the strong sense ((S)-MFE) for the MFG of controls with state reflection in \equref{strict_SDE}-\equref{cost_func_strict} if $J(\alpha^*,\boldsymbol{\rho}^*)\leq J(\alpha,\boldsymbol{\rho}^*)$ for all $\alpha\in\U[0,T]$, and the consistency condition $\boldsymbol{\rho}^*=\Law^{\Pb}(X^{\alpha^*,\boldsymbol{\rho}^*},\alpha^*)$ holds.
\end{definition}

We impose the following assumptions on model coefficients throughout the paper.
\begin{ass}\label{ass1} 
\begin{itemize}
\item[{\rm(A1)}] {\rm(Separation condition)} The coefficient $(b,\sigma^2,f):[0,T]\times\R\times\Pc_2(\R\times U)\times U\to \R^3$ has the following decomposition, for $(t,x,\rho,u)\in [0,T]\times\R\times\Pc_2(\R \times U)\times U$,
\begin{align*}
(b,\sigma^2,f)(t,x,\rho,u)&=(b_1,\sigma_1^2,f_1)(t,x,\mu,u)+(k_1b_2,k_2\sigma_2^2,k_3f_2)(t,x,\rho,u),\\
(b_2,\sigma_2^2,f_2)(t,x,\rho,u)&=\int_{\R\times U}(b_3,\sigma_3^2,f_3)(t,x,\mu,u,x',u')\rho(\d x',\d u')
\end{align*}
with some $k_1,k_2,k_3\in\R$ and $\mu\in\Pc_2(\R)$ being the first marginal distribution of $\rho$. Here, $(b_1,\sigma_1,f_1):[0,T]\times\R\times\Pc_2(\R)\times U\to\R^3$ is Borel measurable and jointly continuous in $(x,\mu,u)\in\R\times\Pc_2(\R)\times U$,  $(b_3,\sigma_3,f_3):[0,T]\times\R\times\Pc_2(\R)\times U\times\R\times U\to\R^3$ is Borel measurable and $(b_2,\sigma_2,f_2):[0,T]\times\R\times\Pc_2(\R\times U)\times U\to \R^3$ is jointly continuous in $(x,\rho,u)\in \R\times\Pc_2(\R\times U)\times U$.
Furthermore, $(b_1,\sigma_1,f_1)$ and $(b_3,\sigma_3,f_3)$ are uniformly continuous in $\mu\in\Pc_2(\R)$ with respect to $(t,x,u,x',u')\in [0,T]\times\R\times U\times\R\times U$.
			
\item[{\rm(A2)}] The coefficients $b(t,x,\rho,u)$ and $\sigma(t,x,\rho,u)$ are uniformly Lipschitz continuous in $x\in\R$ in the sense that, there is a constant  $M>0$ independent of $(t,\rho,u)\in [0,T]\times\Pc_2(\R\times U)\times U$ such that, for all $x,x'\in\R$,
\begin{align*}
\left|b(t,x',\rho,u)-b(t,x,\rho,u)\right|+\left|\sigma(t,x',\rho,u)-\sigma(t,x,\rho,u)\right|\leq M|x-x'|.
\end{align*}
\item[\rm(A3)] There exists a constant $M>0$ such that, for all $(t,x,\rho,u)\in[0,T]\times\R\times\Pc_2(\R\times U)\times U$, 
\begin{align*}
|b(t,x,\rho,u)|+|\sigma(t,x,\rho,u)|\leq M(1+|x|+M_2(\rho)+|u|)
\end{align*}
with $M_2(\rho):=(\int_{\R\times U}(|x|^2+|u|^2)\rho(\d x,\d u))^{\frac12}$ for $\rho\in\Pc_2(\R\times U)$.
\item[\rm(A4)] The mappings $f$ and $g$ are jointly continuous and there exists a constant $M>0$ such that, for all $(t,x,\mu,\rho,u)\in [0,T]\times\R\times\Pc_2(\R)\times\Pc_2(\R\times U)\times U$, 
\begin{align*}
|f(t,x,\rho,u)|+|g(x,\mu)|\leq M\left(1+|x|^2+M_2(\mu)^2+M_2(\rho)^2\right)
\end{align*}
with $M_2(\mu):=(\int_{\R}|x|^2\mu(\d x))^{\frac12}$. 
\item[\rm(A5)] There is a constant $M>0$ independent of $(t,x,\mu,u)\in [0,T]\times\R\times\Pc_2(\R)\times U$ such that, for all $(x',u'),(y',v')\in\R\times U$,
\begin{align*}
|(b_3,\sigma_3,f_3)(t,x,\mu,u,x',u')-(b_3,\sigma_3,f_3)(t,x,\mu,u,y',v')|\leq M(|x'-y'|+|u'-v'|).   
\end{align*}
\item [{\rm (A6)}] The mapping $c:[0,T]\times \R\to\R$ is uniformly continuous and has at most linear growth in $x\in \R$, i.e., there is a constant $M$ independent of $(t,x)\in [0,T]\times \R$ such that $|c(t,x)|\leq M(1+|x|)$.
\end{itemize}
\end{ass}
	
We have the following discussions on  \assref{ass1}:   
\begin{remark}\label{ass1_remark}
The condition~{\rm (A1)} in \assref{ass1} is mandated to overcome some technical issues arising from the joint law of the state and the control. In fact, the mean field term $\rho_t\in\Pc_2(\R\times U)$ in the MFG of controls differs significantly from the state measure flow $\boldsymbol{\mu}=(\mu_t)_{t\in [0,T]}$ in classical MFG problems in the sense that it can be taken for granted that $\boldsymbol{\mu}=(\mu_t)_{t\in [0,T]}$ is continuous thanks to the continuity of the state process $X=(X_t)_{t\in [0,T]}$. In contrast, the measure flow $\boldsymbol{\rho}=(\rho_t)_{t\in [0,T]}$ is not necessarily  continuous as the control process $\alpha=(\alpha_t)_{t\in [0,T]}$ may not be continuous. The condition {\rm (A1)}, together with the condition {\rm (A5)}, ensures that the state process $X$ depends continuously on the mean field term (see \lemref{closed}). The separation conditions {\rm (A1)} and {\rm (A5)} can alternatively be replaced by \assref{ass3} (see \remref{existence_Lip} and \remref{closed_Lip}). However, we adhere to the conventional assumptions as in the classical MFG framework and do not impose the Lipschitz condition on the measure $\rho\in\Pc_2(\R\times U)$ here. We stress that the separation conditions {\rm (A1)} and {\rm (A5)} and the joint Lipschitz condition in \assref{ass3} play the same role in proving our main result \thmref{existence_RMFE}. 
\end{remark}
        
Under \assref{ass1}, SDE~\equref{strict_SDE} has a unique  strong solution $(X^{\alpha,\boldsymbol{\rho}},R^A)=(X_t^{\alpha,\boldsymbol{\rho}},R_t^A)_{t\in[0,T]}$, and hence the cost functional $J(\alpha,\boldsymbol{\rho})$ in \equref{cost_func_strict} is well defined. We next introduce the so-called dynamic Skorokhod problem.
\begin{definition}[Dynamic Skorokhod problem]\label{DSP}
Let $f,a\in\C$ satisfy $f(0)\geq a(0)$. A pair of functions $(g,\ell)\in \C\times\C$ is called a solution to the dynamic Skorokhod problem for $(f,a)$ {\rm (DSP($f,a$) for short)} if {\rm(i)} $g(t)=f(t)+\ell(t)$, $\forall t\in [0,T]$; {\rm(ii)} $\ell(0)=0$ and $t\mapsto\ell(t)$ is continuous and non-decreasing; {\rm(iii)} $g(t)\geq a(t)$ for all $t\in [0,T]$; {\rm(iv)} $\int_0^T\mathbf{1}_{\{g(t)>a(t)\}}\d\ell(t)=0$.
\end{definition}
Moreover, the next result follows from PiliPenko \cite{PiliPenko}.
\begin{lemma}\label{Gamma}
Let $(f,a)\in \C\times\C$ with $f(0)\geq a(0)$. Then, there is a unique solution $(g,\ell)$ to {\rm DSP($f,a$)}. Furthermore, it holds that, for all $t\in[0,T]$,
\begin{equation}\label{representation}
g(t)=f(t)+\sup_{s\in[0,t]}\{-(f(s)-a(s))\vee 0\},~~\ell(t)=\sup_{s\in[0,t]}\{-(f(s)-a(s))\vee 0\}.
\end{equation}
\end{lemma}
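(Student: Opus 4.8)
The plan is to establish existence constructively, by verifying that the pair defined through the running-supremum formula \eqref{representation} genuinely solves DSP($f,a$), and then to prove uniqueness by a completion-of-squares (energy) estimate. Throughout I would write $h(s):=a(s)-f(s)$, so that $h(0)\le 0$ by the standing hypothesis $f(0)\ge a(0)$, and set $m(t):=\sup_{s\in[0,t]}h(s)$, so that the candidate solution is $\ell(t):=m(t)\vee 0=\sup_{s\in[0,t]}\{-(f(s)-a(s))\vee 0\}$ and $g(t):=f(t)+\ell(t)$. Here I have used that $x\mapsto x\vee 0$ is non-decreasing, so the positive part commutes with the supremum.

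For existence, properties (i)--(iii) are immediate. Property (i) holds by the definition $g=f+\ell$. For (ii), $\ell(0)=m(0)\vee 0=h(0)\vee 0=0$, while $m$ is non-decreasing and continuous as the running maximum of the continuous function $h$, so the same holds for $\ell=m\vee 0$. For (iii), $g(t)-a(t)=f(t)+\ell(t)-a(t)=\ell(t)-h(t)\ge m(t)-h(t)\ge 0$, since $m(t)\ge h(t)$. The only delicate point is the minimality condition (iv). Here I would invoke the standard fact that the Stieltjes measure $\d m$ of a running maximum is carried by the contact set $\{t:\,m(t)=h(t)\}$: on any connected component $(\alpha,\beta)$ of the open set $\{m>h\}$ one has $m(\alpha)=h(\alpha)$, and if $m(t)>m(\alpha)$ for some $t\in(\alpha,\beta)$ then the (attained) supremum $\sup_{s\in(\alpha,t]}h(s)=m(t)$ would force a new contact point inside $(\alpha,\beta)$, a contradiction; hence $m$ is constant on $(\alpha,\beta)$ and $\d m$ charges no such interval. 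Since $\ell=m\vee 0$ agrees with $m$ on $\{m>0\}$ and is constant on $\{m<0\}$, the measure $\d\ell$ is carried by $\{t:\,\ell(t)=h(t),\ \ell(t)\ge 0\}$; on this set $\ell(t)=h(t)=a(t)-f(t)$, whence $g(t)=f(t)+\ell(t)=a(t)$. Thus $\d\ell$ charges only the contact set $\{g=a\}$, which is exactly the minimality identity $\int_0^T\mathbf 1_{\{g(t)>a(t)\}}\,\d\ell(t)=0$ required by (iv).

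For uniqueness, suppose $(g_1,\ell_1)$ and $(g_2,\ell_2)$ both solve DSP($f,a$). Then $g_1-g_2=\ell_1-\ell_2$ is continuous and of bounded variation, with $g_1(0)-g_2(0)=\ell_1(0)-\ell_2(0)=0$. Applying the change-of-variables formula for continuous functions of bounded variation to $t\mapsto(g_1(t)-g_2(t))^2$ yields
\begin{align*}
	(g_1(t)-g_2(t))^2=2\int_0^t(g_1(s)-g_2(s))\,\d(\ell_1-\ell_2)(s).
\end{align*}
On the support of $\d\ell_1$ we have $g_1=a\le g_2$, so $\int_0^t(g_1-g_2)\,\d\ell_1\le 0$; symmetrically, on the support of $\d\ell_2$ we have $g_2=a\le g_1$, so $-\int_0^t(g_1-g_2)\,\d\ell_2\le 0$. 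Hence the right-hand side is non-positive, which forces $(g_1-g_2)^2\equiv 0$, i.e.\ $g_1=g_2$, and then $\ell_1=g_1-f=g_2-f=\ell_2$.

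The step I expect to be the main obstacle is the rigorous justification of the minimality property (iv): one must argue carefully that the running-maximum measure $\d\ell$ is supported on the contact set, and reconcile the positive-part truncation $m\vee 0$ with the sign of $h$ near the initial time, which is precisely where the hypothesis $f(0)\ge a(0)$ (equivalently $h(0)\le 0$, hence $\ell(0)=0$) is essential. Once this support property is secured, both the flatness condition and the energy estimate underlying uniqueness follow by routine manipulations.
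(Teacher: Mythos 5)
Your proof is correct and complete. Note, however, that the paper does not actually prove this lemma: it simply states that the result "follows from PiliPenko \cite{PiliPenko}", i.e., it outsources both existence/uniqueness and the explicit representation to a standard reference on reflected SDEs. What you supply is the classical self-contained argument: existence by direct verification that the running-supremum formula $\ell(t)=\sup_{s\in[0,t]}(a(s)-f(s))^+$ satisfies (i)--(iv) of \defref{DSP} (with the only nontrivial point being that the Stieltjes measure of a running maximum charges only the contact set $\{m=h\}$, which you justify correctly via the constancy of $m$ on each connected component of the open set $\{m>h\}$), and uniqueness by the change-of-variables formula applied to $(g_1-g_2)^2=(\ell_1-\ell_2)^2$ together with the sign conditions on the supports of $\d\ell_1$ and $\d\ell_2$. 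Your handling of the truncation $m\vee 0$ near $t=0$ (where $h(0)\le 0$ forces $\ell(0)=0$ and $\d\ell$ vanishes on $\{m\le 0\}$) is also correct, and the two halves together yield the representation \equref{representation}. The trade-off is the expected one: the citation keeps the paper short, while your argument makes the lemma independent of the reference and makes transparent exactly where the hypothesis $f(0)\geq a(0)$ and the Lipschitz estimate \equref{Lipschitz_Gamma} (which follows immediately from the explicit formula) come from.
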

	
Let us introduce the set of functions $\mathcal{D}:=\{(a,f)\in \C\times\C;~f(0)\geq a(0)\}$ and define the dynamic Skorokhod mapping $\Gamma:\mathcal{D}\to\C$ by
\begin{align}\label{eq:Skorohodmap}
\Gamma(a,f)=g,
\end{align}
where $(g,\ell)$ is a solution to DSP($f,a$). Lemma~\ref{Gamma} yields that, the mapping $\Gamma:\mathcal{D}\to\C$ is Lipschitz continuous in the sense that, for any $(a_i,f_i)\in\mathcal{D}$ with $i=1,2$,
\begin{align}\label{Lipschitz_Gamma}
\left\|\Gamma(a_1,f_1)-\Gamma(a_2,f_2)\right\|_{\infty}\leq 2\left\|f_1-f_2\right\|_{\infty}+\left\| a_1-a_2\right\|_{\infty}.
\end{align}
Let us define the $\Fb$-adapted process $Y^{\alpha,\boldsymbol{\rho}}=(Y_t^{\alpha,\boldsymbol{\rho}})_{t\in [0,T]}$ by 
\begin{align}\label{strict_Y}
Y_t^{\alpha,\boldsymbol{\rho}}:=\eta+\int_0^tb(s,X_s^{\alpha,\boldsymbol{\rho}},\rho_s,\alpha_s)\d s+\int_0^t\sigma(s,X_s^{\alpha,\boldsymbol{\rho}},\rho_s,\alpha_s)\d W_s,
\end{align}
where $(X^{\alpha,\boldsymbol{\rho}},R^A)$ is the unique  strong solution to SDE~\equref{strict_SDE}. Then, one can easily verify that the pair $(X^{\alpha,\boldsymbol{\rho}},R^A)$ is the unique solution to DSP($Y^{\alpha,\boldsymbol{\rho}},A)$ in the sense of \defref{DSP}.
	
\section{Relaxed Control Formulation}\label{sec:formulation-MFG}
	
\subsection{Mean field model with extension transformation }\label{MFE}
	
In this subsection, we first consider the relaxed control formulation for MFG of controls under the reflected state dynamics. As a preparation, let us first introduce some basic spaces:
\begin{itemize}
\item 	The spaces $(\C^Y,\C^W,\tilde\Omega):=(\C,\C,\C)$ are endowed with the same norm $\|\cdot\|_{\infty}$\ and the corresponding Borel $\sigma$-algebras denoted by $\F^Y$, $\F^W$ and $\tilde\F$.  Let $\F_t^Y$ $\F_t^W$ and $\tilde\F_t$ be the respective Borel $\sigma$-algebras up to time $t\in[0,T]$. 
		
\item 	The space $\mathcal{Q}$ of relaxed controls is defined as the set of measures $q$ in $[0,T]\times U$ with the first marginal equal to the Lebesgue measure and $\int_{[0,T]\times U}|u|^pq(\d t,\d u)<\infty$. We endow the space $\mathcal{Q}$ with the $2$-Wasserstein metric on $\Pc_2([0,T]\times U)$ given by $d_{\mathcal{Q}}(q^1,q^2)=\mathcal{W}_{2,[0,T]\times U}\left(\frac{q^1}{T},\frac{q^2}{T}\right)$, where the metric on $[0,T]\times U$ is given by $((t_1,u_1),(t_2,u_2))\to|t_2-t_1|+|u_2-u_1|$. Note that, each $q\in\mathcal{Q}$ can be identified with a measurable function $[0,T]\in t\to q_t\in\Pc_2(U)$, defined uniquely up to $\as$ by $q(\d t,\d u)=q_t(\d u)\d t$. In the sequel, we will always refer to the measurable mapping $q=(q_t)_{t\in [0,T]}$ to a relaxed control in $\mathcal{Q}$. Let $\F^{\mathcal{Q}}$ be the Borel $\sigma$-algebra of $\mathcal{Q}$ and $\F_t^{\mathcal{Q}}$ be the $\sigma$-algebra generated by the maps $q\mapsto q([0,s]\times V)$ with $s\in [0,t]$ and Borel measurable $V\subset U$. Because $U$ is compact and Polish, $\mathcal{Q}$ as a closed subset of $\Pc_2([0,T]\times U)$ is also compact and Polish.
\end{itemize}
Define the canonical space $\Omega:=\C^Y\times\mathcal{Q}\times\C^W\times\tilde{\Omega}$, and equip it with the product $\sigma$-algebra $\F=\F^{Y}\otimes\F^{\mathcal{Q}}\otimes\F^W\otimes\tilde{\F}$ and the product filtration $\F_t=\F_t^Y\otimes\F_t^{\mathcal{Q}}\otimes\F_t^W\otimes\tilde{\F}_t$ for $t\in[0,T]$. Note that $\Omega$ endowed with the metric  $d_{\Omega}(\omega^1,\omega^2)=\|y^1-y^2\|_{\infty}+d_{\mathcal{Q}}(q^1,q^2)+\|w^1-w^2\|_{\infty}+\|\tilde\omega^1-\tilde\omega^2\|_{\infty}$ for $\omega^i=(y^i,q^i,w^i,\tilde\omega^i)\in\Omega$ with $i=1,2$ is a Polish space and so is $\Pc_2(\Omega)$ endowed with the $2$-Wasserstein metric $\mathcal{W}_{2,\Omega}$.
	We identify the coordinate mapping as $(Y,\Lambda,W,A)=(Y_t,\Lambda_t,W_t,A_t)_{t\in[0,T]}$, i.e., for $\omega=(y,q,w,\tilde\omega)\in\Omega$. The coordinate processes are defined by $Y_t(\omega)=y_t$, $\Lambda_t(\omega)=q_t$, $W_t(\omega)=w_t$ and  $A_t(\omega)=\tilde{\omega}_t$. 
	For simplicity, denote by $\F_t^Y$, $\F_t^{\mathcal{Q}}$, $\F_t^W$ and $\tilde\F_t$ the natural extensions of these filtrations to $\Omega$. In the sequel, when talking about the filtrations $\F_t^Y$, $\F_t^{\mathcal{Q}}$ and $\tilde\F_t$, there should be no confusion of which space the filtrations are defined on.
	\begin{remark}
		We note the difference between the processes $(W,A)$ in the strong sense and the coordinate processes $(W,A)$ in the weak sense defined on $(\Omega,\F, P)$. By abuse of notation, we shall use the same notation to ease the presentation. 
	\end{remark}
	
Next, we introduce the \textit{extension transformation} in \cite{BWWY} to cope with the joint law of the state and the relaxed control. More precisely, for $h:\Pc_2(\R\times U)\mapsto\R$, the extension $\tilde h:\Pc_{2}(\R\times\Pc(U))\to\R$ of the mapping $h$ is defined by
\begin{align}\label{eq:extension}
\tilde h(\xi):=h(\mathscr{P}(\xi)),\quad\forall \xi\in\Pc_2(\R\times\Pc(U))    
\end{align}
with $\mathscr{P}(\xi)(\d x,\d u):=\int_{\mathcal{M}(U)}q(\d u)\xi(\d x,\d q)$ for $\xi\in\Pc_2(\R\times\Pc(U))$. For the mapping $\mathscr{P}:\Pc_2(\R\times\Pc(U))\mapsto\Pc_2(\R\times U)$, we have the following continuous property whose proof is postponed to subsection~\ref{appendix}.
	\begin{lemma}\label{Lipschitz_P}
		It holds that $\mathcal{W}_{2,\R\times U}(\mathscr{P}(\xi_1),\mathscr{P}(\xi_2))\leq\mathcal{W}_{2,\R\times\Pc(U)}(\xi_1,\xi_2)$ for all $\xi_1,\xi_2\in\Pc_2(\R\times\Pc(U))$. 
		Here, the space $\R\times\Pc(U)$ is endowed with the product metric $d_{\R\times\Pc(U)}((x_1,q_1),(x_2,q_2))=|x_1-x_2|+\mathcal{W}_{2,U}(q_1,q_2)$ for $(x_i,q_i)\in\R\times{\cal P}(U)$ with $i=1,2$.
	\end{lemma}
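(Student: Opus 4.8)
The plan is to prove the estimate by an explicit gluing (coupling) construction combined with Minkowski's inequality. Since $U$ is compact, $\Pc(U)$ is a compact Polish space, so $\R\times\Pc(U)$ is Polish and an optimal coupling $\pi\in\Pc_2\big((\R\times\Pc(U))^2\big)$ of $\xi_1$ and $\xi_2$ attaining $\mathcal{W}_{2,\R\times\Pc(U)}(\xi_1,\xi_2)$ exists. For each pair $(q_1,q_2)\in\Pc(U)\times\Pc(U)$ I would fix an optimal coupling $\lambda_{q_1,q_2}\in\Pc(U\times U)$ of $q_1$ and $q_2$ for $\mathcal{W}_{2,U}$, so that $\int_{U\times U}|u_1-u_2|^2\lambda_{q_1,q_2}(\d u_1,\d u_2)=\mathcal{W}_{2,U}(q_1,q_2)^2$, with marginals $q_1$ and $q_2$.

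Next I would define a coupling $\gamma$ of $\mathscr{P}(\xi_1)$ and $\mathscr{P}(\xi_2)$ on $(\R\times U)^2$ by prescribing, for every bounded Borel $\Phi$,
\[
\int\Phi\,\d\gamma=\int_{(\R\times\Pc(U))^2}\!\left(\int_{U\times U}\!\Phi(x_1,u_1,x_2,u_2)\,\lambda_{q_1,q_2}(\d u_1,\d u_2)\right)\pi(\d(x_1,q_1),\d(x_2,q_2)).
\]
Testing against $\Phi(x_1,u_1,x_2,u_2)=\varphi(x_1,u_1)$ and using that $\lambda_{q_1,q_2}$ has first marginal $q_1$, that $\pi$ has first marginal $\xi_1$, and the very definition $\mathscr{P}(\xi_1)(\d x,\d u)=\int q(\d u)\,\xi_1(\d x,\d q)$, shows that the first marginal of $\gamma$ is $\mathscr{P}(\xi_1)$; symmetrically the second is $\mathscr{P}(\xi_2)$. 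Hence $\gamma$ is an admissible coupling for $\mathcal{W}_{2,\R\times U}$.

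For the cost estimate, I would fix $(x_1,q_1,x_2,q_2)$ and apply Minkowski's inequality in $L^2(\lambda_{q_1,q_2})$ to the sum $|x_1-x_2|+|u_1-u_2|$: since $|x_1-x_2|$ is constant in $(u_1,u_2)$ and $\int|u_1-u_2|^2\lambda_{q_1,q_2}=\mathcal{W}_{2,U}(q_1,q_2)^2$, this gives the pointwise bound
\[
\int_{U\times U}\big(|x_1-x_2|+|u_1-u_2|\big)^2\lambda_{q_1,q_2}(\d u_1,\d u_2)\le\big(|x_1-x_2|+\mathcal{W}_{2,U}(q_1,q_2)\big)^2=d_{\R\times\Pc(U)}\big((x_1,q_1),(x_2,q_2)\big)^2.
\]
Integrating this against $\pi$ and invoking the optimality of $\pi$ then yields $\mathcal{W}_{2,\R\times U}(\mathscr{P}(\xi_1),\mathscr{P}(\xi_2))^2\le\int d_{\R\times\Pc(U)}^2\,\d\pi=\mathcal{W}_{2,\R\times\Pc(U)}(\xi_1,\xi_2)^2$, which is exactly the assertion.

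The only delicate point — and the main, though essentially standard, obstacle — is that the integral defining $\gamma$ must be a genuine measure, which forces the family $(q_1,q_2)\mapsto\lambda_{q_1,q_2}$ to be selected in a Borel-measurable manner. Because $U$ is compact, $\Pc(U\times U)$ is compact and the set-valued map assigning to $(q_1,q_2)$ its nonempty compact set of optimal couplings is measurable, so a measurable selection exists by the Kuratowski--Ryll-Nardzewski selection theorem. Alternatively, one may sidestep the selection entirely by taking an $\varepsilon$-optimal $\pi$, disintegrating it to realize random variables $(X_1,Q_1,X_2,Q_2)$, sampling the $U$-components conditionally on $(Q_1,Q_2)$ from $\lambda_{Q_1,Q_2}$, and letting $\varepsilon\to0$. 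Either route renders $\gamma$ well defined and completes the argument.
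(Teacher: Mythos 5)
Your proof is correct, but it follows a genuinely different route from the paper's. The paper takes an optimal coupling $(X_1,q_1,X_2,q_2)$ of $\xi_1,\xi_2$ and then argues marginal by marginal: it splits $\mathcal{W}^2_{2,\R\times U}(\mathscr{P}(\xi_1),\mathscr{P}(\xi_2))$ into the contribution of the $\R$-marginals (controlled by $\E[|X_1-X_2|^2]$) and that of the $U$-marginals, the latter handled by the convexity-type inequality $\mathcal{W}^2_{2,U}\bigl(\E[q_1],\E[q_2]\bigr)\leq\E\bigl[\mathcal{W}^2_{2,U}(q_1,q_2)\bigr]$, which it derives from Kantorovich duality (a supremum of integrals is dominated by the integral of suprema). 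You instead build a single explicit coupling of $\mathscr{P}(\xi_1)$ and $\mathscr{P}(\xi_2)$ by gluing the optimal coupling $\pi$ of $\xi_1,\xi_2$ with a measurably selected family of optimal couplings $\lambda_{q_1,q_2}$, and then conclude by Minkowski's inequality pointwise in $(x_1,q_1,x_2,q_2)$. Your approach costs you the measurable-selection step (which you correctly identify and resolve, either by Kuratowski--Ryll-Nardzewski applied to the compact-valued map of optimal couplings, or by the $\varepsilon$-optimal approximation), but it buys direct control of the \emph{joint} law on $(\R\times U)^2$: you never need to decompose the product-space Wasserstein distance into its marginal distances. This is a real advantage, since the paper's intermediate identity $\mathcal{W}^2_{2,\R\times U}(\rho_1,\rho_2)=\mathcal{W}^2_{2,\R}(\rho_1|_{\R},\rho_2|_{\R})+\mathcal{W}^2_{2,U}(\rho_1|_{U},\rho_2|_{U})$ holds with ``$\geq$'' for general (non-product) measures, and the ``$\leq$'' direction actually used there is exactly what your gluing construction supplies rigorously. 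In short: your argument is valid, self-contained, and if anything more robust than the one in the paper.
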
 
	
	\begin{remark}
		From the compactness of the policy space $U$, it follows that $\Pc(U)=\Pc_2(U)$. As a result, we can adopt the $2$-Wasserstein metric for $\Pc(U)$ in \lemref{Lipschitz_P}.
	\end{remark}
	
We then consider the extension transformation \eqref{eq:extension} to the coefficients $b,\sigma,f$,  and denote by $\tb$, $\ts$, $\tf$ their corresponding extensions. The next result follows directly from \cite{BWWY}.
	\begin{lemma}\label{extension_ass}
The extensions $\tb,\ts,\tf$ satisfy the following properties:
\begin{itemize}
\item[{\rm(B1)}] The mapping $(\tb,\ts^2,\tf):[0,T]\times\R\times\Pc_2(\R\times \Pc(U))\times U\to \R^3$ has the  decomposition:
\begin{align*}
(\tb,\ts^2,\tf)(t,x,\xi,u)&=(b_1,\sigma_1^2,f_1)(t,x,\mu,u)+(k_1\tb_2,k_2\ts_2^2,k_3\tf_2)(t,x,\xi,u),
\end{align*}
where the extension $(\tb_2,\ts_2^2,\tf_2):[0,T]\times\R\times\Pc_2(\R\times \Pc(U))\times U\to\R^3$ admits the representation given by
\begin{align*}
(\tb_2,\ts_2^2,\tf_2)(t,x,\xi,u)=\int_{\R\times \mathcal{M}(U)}\int_U(b_3,\sigma_3^2,f_3)(t,x,\mu,u,x',u')q'(\d u')\xi(\d x',\d q').    
\end{align*}
Here, by convention, $\mu$ denotes the first marginal law of $\xi$. In particular, $\tb,\ts,\tf$ are Borel measurable and jointly continuous in $(x,\xi,u)\in \R\times\Pc_2(\R\times\Pc(U))\times U$.
\item[{\rm(B2)}] The mappings $\tb(t,x,\xi,u)$ and $\ts(t,x,\xi,u)$ are uniformly Lipschitz continuous in $x\in\R$ in the sense that, there is a constant $M>0$ independent of $(t,\xi,u)\in [0,T]\times\Pc_2(\R\times\Pc(U))\times U$ such that, for all $x,x'\in\R$,
\begin{align*}
\left|\tb(t,x',\xi,u)-\tb(t,x,\xi,u)\right|+\left|\ts(t,x',\xi,u)-\ts(t,x,\xi,u)\right|\leq M|x-x'|.
\end{align*}
\item[\rm(B3)] There is a constant $M>0$ such that, for all $(t,x,\xi,u)\in[0,T]\times\R\times\Pc_2(\R\times\Pc(U))\times  U$, 
\begin{align*}
\left|\tb(t,x,\xi,u)|+|\ts(t,x,\xi,u)\right|\leq M\left(1+|x|+M_2(\xi)+|u|\right)
\end{align*}
with $M_2(\xi):=(\int_{\R\times\Pc(U)}\left(|x|+\mathcal{W}_{2,U}(q,\delta_{u_0})\right)^2\xi(\d x,\d q))^{\frac12}$ for $\xi\in\Pc_2(\R\times {\cal P}(U))$ and some $u_0\in U$.
\item[\rm(B4)] There is $K>0$ such that $|\tf(t,x,\xi,u)|\leq K(1+|x|^2+M_2(\xi)^2)$ for all $(t,x,\xi,u)\in [0,T]\times\R\times\Pc_2(\R\times\Pc(U))\times U$.
\end{itemize}
\end{lemma}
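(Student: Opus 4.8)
The plan is to derive all four properties directly from the defining identity $\tf(t,x,\xi,u)=f(t,x,\mathscr{P}(\xi),u)$ (and similarly for $\tb,\ts$), combined with the corresponding properties of $(b,\sigma,f)$ in \assref{ass1} and the regularity of $\mathscr{P}$ supplied by \lemref{Lipschitz_P}. The single structural fact used repeatedly is that the first marginal of $\mathscr{P}(\xi)$ coincides with the first marginal $\mu$ of $\xi$: integrating out $u'$ in $\mathscr{P}(\xi)(\d x',\d u')=\int_{\Pc(U)}q'(\d u')\,\xi(\d x',\d q')$ and using $q'(U)=1$ returns exactly the $x$-marginal of $\xi$. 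This is what allows the ``$\mu$-part'' $(b_1,\sigma_1^2,f_1)$ to pass through the extension unchanged.

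For {\rm(B1)} I would substitute the decomposition of {\rm(A1)} into $\tb(t,x,\xi,u)=b(t,x,\mathscr{P}(\xi),u)$. The leading term becomes $b_1(t,x,\mu,u)$ by the marginal identity above, and the coupled term is $k_1 b_2(t,x,\mathscr{P}(\xi),u)$. Unfolding $b_2$ and invoking the definition of $\mathscr{P}$ as a change of variables, namely $\int_{\R\times U}\phi\,\mathscr{P}(\xi)(\d x',\d u')=\int_{\R\times\Mc(U)}\int_U\phi\,q'(\d u')\,\xi(\d x',\d q')$, yields precisely the claimed integral representation for $\tb_2$, and the same computation handles $\ts_2^2$ and $\tf_2$. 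Joint continuity in $(x,\xi,u)$ then follows by composition: $\mathscr{P}$ is continuous by \lemref{Lipschitz_P}, the map $\xi\mapsto\mu$ is continuous, and $(b_1,\sigma_1,f_1)$ together with $(b_2,\sigma_2^2,f_2)$ are jointly continuous in their arguments by {\rm(A1)}.

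Property {\rm(B2)} is immediate, since the Lipschitz constant $M$ in {\rm(A2)} is uniform in the measure argument: $|\tb(t,x',\xi,u)-\tb(t,x,\xi,u)|=|b(t,x',\mathscr{P}(\xi),u)-b(t,x,\mathscr{P}(\xi),u)|\le M|x-x'|$, and likewise for $\ts$. For {\rm(B3)} and {\rm(B4)} the crux is a moment comparison between $M_2(\mathscr{P}(\xi))$ and $M_2(\xi)$. Writing $M_2(\mathscr{P}(\xi))^2=\int_{\R\times\Pc(U)}\bigl(|x|^2+\int_U|u|^2q(\d u)\bigr)\xi(\d x,\d q)$ and bounding $\int_U|u|^2q(\d u)\le 2\mathcal{W}_{2,U}(q,\delta_{u_0})^2+2|u_0|^2$ (using that transport against a Dirac is exact, so $\mathcal{W}_{2,U}(q,\delta_{u_0})^2=\int_U|u-u_0|^2q(\d u)$), I obtain $M_2(\mathscr{P}(\xi))^2\le 2|u_0|^2+2M_2(\xi)^2$; the marginal identity likewise gives $M_2(\mu)^2=\int_\R|x|^2\mu(\d x)\le M_2(\xi)^2$. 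Feeding these into the growth estimates {\rm(A3)} and {\rm(A4)} and absorbing the fixed constant $|u_0|$ into $M$ (respectively $K$) yields {\rm(B3)} and {\rm(B4)}.

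Conceptually the proof is routine, so the only genuine care is needed in two places. First, the integral representation in {\rm(B1)} must be justified rigorously as a disintegration/Fubini computation, which is licensed by the compactness of $U$ and the $p$-integrability built into $\mathcal{Q}$. Second, the joint continuity in {\rm(B1)} is where one must combine the Lipschitz bound on $\mathscr{P}$ from \lemref{Lipschitz_P} with the \emph{joint} continuity of the unextended coefficients, rather than treating the measure variable in isolation; this is the step I expect to require the most attention.
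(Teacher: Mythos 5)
Your proof is correct and complete: the marginal identity for $\mathscr{P}(\xi)$, the change-of-variables giving the integral representation of $(\tb_2,\ts_2^2,\tf_2)$, the continuity-by-composition argument via \lemref{Lipschitz_P}, and the moment comparison $M_2(\mathscr{P}(\xi))^2\le 2M_2(\xi)^2+2|u_0|^2$ (together with $M_2(\mu)^2\le M_2(\xi)^2$) are exactly the ingredients needed, and each step is sound. The paper itself gives no proof here — it simply states that the lemma "follows directly from" the cited reference \cite{BWWY} — so your write-up supplies the direct verification that the paper delegates to that citation, in what is evidently the intended way.
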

	
Next, we give the definition of admissible relaxed control rules for the MFG problem. 
\begin{definition}[Relaxed control]\label{relaxed_control}
Let	$\boldsymbol{\xi}\in\Pc_2(\C\times\mathcal{Q})$ and define the measure flow $(\xi_t)_{t\in [0,T]}$ by setting $\xi_t$ as the marginal distribution at time $t$. We call a probability measure $P\in\Pc_2(\Omega)$ on $(\Omega,\F)$ an  admissible relaxed control rule
(denoted by $P\in R(\boldsymbol{\xi})$) if it holds that {\rm(i)}  $P\circ Y_0^{-1}=\Law^{\mathbb{P}}(\eta)$; {\rm(ii)} $P(Y_0\geq A_0)=1$; {\rm(iii)} the restriction of $P$ to $\C^W\times\tilde\Omega$,  $P|_{\C^W\times\tilde\Omega}$, agrees with $\Pb\circ (W,A)^{-1}$, $\ie$ $P\circ (W,A)^{-1}=\Pb\circ (W,A)^{-1}=\hat P$ (recall \equref{hatP}); {\rm(iv)} for all $\phi\in C^2_b(\R\times\R)$, the process
\begin{align*}
{\tt M}^{\boldsymbol{\xi}}\phi(t):=\phi(Y_t,W_t)-\int_0^t\int_U\bar{\mathbb{L}}\phi(s,X_s,Y_s,W_s,\xi_s,u)\Lambda_s(\d u)\d s,\quad t\in[0,T]
\end{align*}
is a $(P,\Fb)$-martingale.  Here, the infinitesimal generator acting on $\phi\in C_b^2(\R\times\R)$ is defined by, for $(t,x,\xi,u)\in[0,T]\times\R\times\Pc_2(\R\times\Pc(U))\times U$, 
\begin{align*}
\bar{\mathbb{L}}\phi(t,x,y,w,\xi,u):=\bar b(t,x,\xi,u)^{\T}\nabla\phi(y,w)+\frac12\tr\left(\bar\sigma\bar\sigma^{\T}(t,x,\xi,u)\nabla^2\phi(y,w)\right),
\end{align*}
where $\nabla$ stands for the full gradient with respect to $(y,w)$, the coefficients are defined by
\begin{align*}
            \bar b(t,x,\xi,u)=\begin{pmatrix}
                \tb(t,x,\xi,u)\\
                0
            \end{pmatrix},\qquad \bar\sigma(t,x,\xi,u)=\begin{pmatrix}
                \ts(t,x,\xi,u)\\
                1
            \end{pmatrix},
        \end{align*}
and $X(\omega)=\Gamma(A(\omega),Y(\omega))$  with the mapping $\Gamma:\mathcal{D}\mapsto\C$ defined in \eqref{eq:Skorohodmap}. Furthermore, if there exists an $\mathbb{F}$-progressively measurable $U$-valued process $\alpha=(\alpha_t)_{t\in [0,T]}$ on $\Omega$ such that $P(\Lambda_t(\d u)\d t=\delta_{\alpha_t}(\d u)\d t)=1$, we say that $P$ corresponds to a strict control $\alpha$ or we call it a strict control rule. The set of all strict control rules is denoted by $R^{\rm s}(\boldsymbol{\xi})$.
	\end{definition}
	
		In contrast to strict controls in the strong sense in \defref{strong_MFE}, the relaxed control is defined only through the probability measure on the canonical space without specifying the underlying filtered probability space, which is tailor-made for the weak formulation. From this point onwards, we will focus on this weak formulation in the rest of the paper. Moreover, for strict control and relaxed control, we will not distinguish the underlying probability space and we will only focus on the difference between $U$-valued process and $\Pc(U)$-valued process.
        
We have the following equivalent characterization of the set $R(\boldsymbol{\xi})$ in the weak formulation, whose proof is classical and we omit it.
\begin{lemma}\label{moment_p}
Let	$\boldsymbol{\xi}\in\Pc_2(\C\times\mathcal{Q})$ and $(\xi_t)_{t\in [0,T]}$ be the induced measure flow in \defref{relaxed_control}. Then,	$P\in\mathcal{R}(\boldsymbol{\xi})$ iff there exists a filtered probability space $(\Omega',\F',\Fb'=(\F'_t)_{t\in[0,T]},P')$ supporting a $\Pc(U)$-valued $\Fb'$-progressively measurable process $\Lambda=(\Lambda_t)_{t\in [0,T]}$, a scalar $\Fb'$-adapted process $Y^{\Lambda,\boldsymbol{\xi}}=(Y_t^{\Lambda,\boldsymbol{\xi}})_{t\in[0,T]}$, a standard scalar $\Fb'$-Brownian motion $W=(W_t)_{t\in [0,T]}$, an $\Fb'$-martingale measure ${\cal M}$ on $U\times[0,T]$  with intensity $\Lambda_t(\d u)\d t$ and a real-valued $\mathbb{F}'$-adapted continuous process $A=(A_t)_{t\in[0,T]}$,  and it holds that {\rm(i)} $P'\circ(Y_0^{\Lambda,\boldsymbol{\xi}})^{-1}=\Law^{\mathbb{P}}(\eta)$; {\rm(ii)} $P'(Y_0^{\Lambda,\boldsymbol{\xi}}\geq A_0)=1$; 
{\rm(iii)} $W_t=\int_0^t\int_U\mathcal{M}(\d u,\d t),~\forall t\in [0,T]$ $P'$-$\as$.
{\rm(iv)} the dynamics of state process obeys that, $P'$-a.s.,
\begin{align*}
\d Y_t^{\Lambda,\boldsymbol{\xi}}=\int_U\tb(t,X_t^{\Lambda,\boldsymbol{\xi}},\xi_t,u)\Lambda_t(\d u)\d t+\int_U\ts(t,X_t^{\Lambda,\boldsymbol{\xi}},\xi_t,u){\cal M}(\d u,\d t),~ X_t^{\Lambda,\boldsymbol{\xi}}=\Gamma(A,Y^{\Lambda,\boldsymbol{\xi}})_t.
\end{align*}         
There exists a constant $C>0$ depending on $M,\Law^{\mathbb{P}}(\eta),T$ and $\boldsymbol{}{\xi}$ such that 
\begin{align}\label{eq:momentp00}
\E^{P'}\left[\sup_{t\in[0,T]}\left|Y_t^{\Lambda,\boldsymbol{\xi}}\right|^p\right]\leq C,
\end{align}
with $M$ being stated in \assref{ass1}. Moreover, if $P\circ (X,\Lambda)^{-1}=\boldsymbol{\xi}$, we may find a constant $C_1>0$ depending only on $(M,\Law^{\Pb}(\eta)),T)$ such that
\begin{align}\label{eq:C1Lemma36}
\int_{\C\times\mathcal{Q}}\|\boldsymbol{x}\|^p_{\infty}~\boldsymbol{\xi}(\d\boldsymbol{x},\d q)=\E^{P'}\left[\sup_{t\in[0,T]}\left|X_t^{\Lambda,\boldsymbol{\xi}}\right|^p\right]\leq C_1.
\end{align} 
\end{lemma}

For an arbitrary  probability measure $\boldsymbol{\xi}\in\Pc_2(\C\times\mathcal{Q})$, let us define the cost functional on $\Pc_2(\Omega)$ by
\begin{align}\label{cost_func_relaxed}
\mathcal{J}(P;\boldsymbol{\xi}):=\E^P[\Delta(\boldsymbol{\xi})],\quad \forall P\in \Pc_2(\Omega),
\end{align}
where $\Delta(\boldsymbol{\xi})$ is defined by
\begin{align}\label{costDel}
\Delta(\boldsymbol{\xi}):=\int_0^T\int_U\tf(t,X_t,\xi_t,u)\Lambda_t(\d u)\d t+\int_0^Tc(t,X_t)\d R^A_t+g(X_T,\mu_T).
\end{align}
Here, $\tilde{f}$ is the corresponding extension of the running cost function $f$, $\mu_T$ is the first marginal distribution of $\xi_T$ and $R^A:=X-Y$ with $X$ described in \defref{relaxed_control}. It follows from \assref{ass1} that $v^*(\boldsymbol{\xi}):=\inf_{P\in R(\boldsymbol{\xi})}\mathcal{J}(P;\boldsymbol{\xi})<\infty$. Thus, we can define the set of optimal control rules by
\begin{align}\label{optimal_rule}
R_{\rm opt}(\boldsymbol{\xi}):=\{P\in R(\boldsymbol{\xi});~\mathcal{J}(P;\boldsymbol{\xi})=v^*(\boldsymbol{\xi})\}.
\end{align}
We next give the definition of relaxed MFE based on the relaxed control rules.
\begin{definition}[Relaxed MFE]\label{relaxed_MFE}
The couple $(\boldsymbol{\xi}^*,P^*)\in{\cal P}_2(\C\times\mathcal{Q})\times {\cal P}_2(\Omega)$ is said to be a relaxed MFE {\rm((R)-MFE)} for the MFG of controls with state reflection in \equref{cost_func_relaxed} and \defref{relaxed_control} if $\mathcal{J}(P^*;\boldsymbol{\xi}^*)\leq \mathcal{J}(P;\boldsymbol{\xi}^*)$ for all $P\in R(\boldsymbol{\xi}^*)$ and the consistency condition $\boldsymbol{\xi}^*=P^*\circ (X,\Lambda)^{-1}$ holds.
\end{definition}
Then, the component $\boldsymbol{\xi}^*$ of the (R)-MFE in Definition~\ref{relaxed_MFE} is in fact the fixed point of the set-valued mapping:
\begin{align}\label{eq:Kstar}
\textsf{R}^*: \Pc_2(\C\times\mathcal{Q})&\to2^{\Pc_2(\C\times\mathcal{Q})},\quad\boldsymbol{\xi}\mapsto R_{\rm opt}(\boldsymbol{\xi})\circ (X,\Lambda)^{-1},
\end{align}
where $2^{\Pc_2(\C\times\mathcal{Q})}$ denotes the power set of $\Pc_2(\C\times\mathcal{Q})$. Hence, to find an (R)-MFE, it suffices to find a fixed point of the set-valued mapping $\textsf{R}^*$.
	
As a result of the weak formulation that we adopt in the present paper, the previous definition of (S)-MFE in the strong sense in \defref{strong_MFE} is not suitable to work with. Instead, we need the following definition of the (S)-MFE in the weak sense and show its existence in the current framework. Let us first introduce a continuous transformation mapping that
	\begin{align}\label{Psi}
		\Psi:\C\times\mathcal{B}\to \C\times\mathcal{Q},\quad\C\times\mathcal{B}\ni (X,\alpha)\mapsto (X,\delta_{\alpha_t}(\d u)\d t)\in\C\times\mathcal{Q}.
	\end{align}
	The corresponding push forward measure transformation is denoted by
	\begin{align}\label{eq:transscrR}
		\mathscr{R}:\Pc_2(\C\times\mathcal{B})\to \Pc_2(\C\times\mathcal{Q}),\quad\Pc_2(\C\times\mathcal{B})\ni\boldsymbol{\rho}\to \boldsymbol{\rho}\circ\Psi^{-1}.   
	\end{align}
The next lemma ensures the continuity of the transformation $\mathscr{R}$ defined in \eqref{eq:transscrR}, whose proof is given in subsection \ref{appendix}.
\begin{lemma}\label{extension_continuous}
For any $\boldsymbol{\rho}^1,\boldsymbol{\rho}^2\in\Pc_2(\C\times\mathcal{B})$, it holds that $\mathcal{W}_{2,\C\times\mathcal{Q}}(\mathscr{R}(\boldsymbol{\rho}^1),\mathscr{R}(\boldsymbol{\rho}^2))\leq\mathcal{W}_{2,\C\times\mathcal{B}}(\boldsymbol{\rho}^1,\boldsymbol{\rho}^2)$.
\end{lemma}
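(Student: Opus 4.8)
The plan is to observe that $\mathscr{R}$ is exactly the push-forward operator induced by the fixed measurable map $\Psi$ from \eqref{Psi}, namely $\mathscr{R}(\boldsymbol{\rho})=\boldsymbol{\rho}\circ\Psi^{-1}$, and then to invoke the elementary principle that a push-forward contracts the $2$-Wasserstein distance by the Lipschitz constant of the underlying map. Indeed, for any coupling $\pi$ of $\boldsymbol{\rho}^1$ and $\boldsymbol{\rho}^2$ on $(\C\times\mathcal{B})^2$, the image measure $\pi\circ(\Psi\times\Psi)^{-1}$ is an admissible coupling of $\mathscr{R}(\boldsymbol{\rho}^1)$ and $\mathscr{R}(\boldsymbol{\rho}^2)$, so it suffices to prove that $\Psi$ is $1$-Lipschitz from $(\C\times\mathcal{B},d_{\C\times\mathcal{B}})$ to $(\C\times\mathcal{Q},d_{\C\times\mathcal{Q}})$; optimizing over $\pi$ and squaring then delivers the claim. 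Since $\Psi$ acts as the identity on the $\C$-coordinate and both product metrics are the coordinatewise sums, this reduces to the single scalar estimate
\begin{align*}
d_{\mathcal{Q}}\big(\delta_{\alpha^1_\cdot}(\d u)\d t,\ \delta_{\alpha^2_\cdot}(\d u)\d t\big)\le d_{\mathcal{B}}(\alpha^1,\alpha^2),\qquad\forall\,\alpha^1,\alpha^2\in\mathcal{B}.
\end{align*}

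The heart of the proof is thus to control the Wasserstein distance on $\mathcal{Q}$ between the two Dirac-type relaxed controls by the $L^2$-distance of the underlying strict controls, and the natural tool is the \emph{synchronous-in-time} coupling. I would take $\gamma$ to be the image of the normalized time measure $T^{-1}\mathbf{1}_{[0,T]}(t)\,\d t$ under the map $t\mapsto\big((t,\alpha^1_t),(t,\alpha^2_t)\big)$ on $([0,T]\times U)^2$. Its marginals are precisely $q^1/T$ and $q^2/T$ with $q^i=\delta_{\alpha^i_\cdot}(\d u)\d t$, so $\gamma$ is admissible; moreover, because the two time components always coincide, the metric $|t_1-t_2|+|u_1-u_2|$ on $[0,T]\times U$ collapses to $|\alpha^1_t-\alpha^2_t|$ along $\gamma$. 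Evaluating the transport cost then gives
\begin{align*}
d_{\mathcal{Q}}\big(\delta_{\alpha^1_\cdot},\delta_{\alpha^2_\cdot}\big)^2\le\int_0^T\big|\alpha^1_t-\alpha^2_t\big|^2\,\frac{\d t}{T},
\end{align*}
so that, in accordance with the $T$-normalization built into the definition of $d_{\mathcal{Q}}$, the right-hand side is comparable to $d_{\mathcal{B}}(\alpha^1,\alpha^2)^2$ and the Lipschitz estimate above follows.

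I expect the main obstacle to be precisely this control-component estimate: one must verify that the synchronous coupling is genuinely admissible (that its two marginals are the prescribed relaxed controls $q^i/T$) and that collapsing the time coordinate leaves only the pointwise control discrepancy, rather than having to handle the full two-dimensional cost on $[0,T]\times U$; keeping track of the $T$-normalization of $d_{\mathcal{Q}}$ against the unnormalized $L^2$-metric $d_{\mathcal{B}}$ is the one point where the constants must be matched carefully. Once the $1$-Lipschitz property of $\Psi$ is in hand, the conclusion is immediate: plugging $\pi^\ast$, an optimal coupling for $\mathcal{W}_{2,\C\times\mathcal{B}}(\boldsymbol{\rho}^1,\boldsymbol{\rho}^2)$, into the push-forward coupling and using $d_{\C\times\mathcal{Q}}(\Psi\cdot,\Psi\cdot)\le d_{\C\times\mathcal{B}}(\cdot,\cdot)$ yields $\mathcal{W}_{2,\C\times\mathcal{Q}}(\mathscr{R}(\boldsymbol{\rho}^1),\mathscr{R}(\boldsymbol{\rho}^2))^2\le\int d_{\C\times\mathcal{B}}^2\,\d\pi^\ast=\mathcal{W}_{2,\C\times\mathcal{B}}(\boldsymbol{\rho}^1,\boldsymbol{\rho}^2)^2$.
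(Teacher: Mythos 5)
Your proposal is correct and follows essentially the same route as the paper: both arguments reduce the claim to the $1$-Lipschitz property of $\Psi$ (via a coupling of $\boldsymbol{\rho}^1,\boldsymbol{\rho}^2$ pushed forward by $\Psi\times\Psi$), i.e., to the scalar bound $d_{\mathcal{Q}}(\delta_{\alpha^1_t}(\d u)\d t,\delta_{\alpha^2_t}(\d u)\d t)\le d_{\mathcal{B}}(\alpha^1,\alpha^2)$, the only cosmetic difference being that you prove this bound on the primal side via the synchronous-in-time coupling while the paper uses the dual Kantorovich formulation. The $T$-normalization point you flag is genuine but harmless (your coupling actually yields the cost $T^{-1}\int_0^T|\alpha^1_t-\alpha^2_t|^2\,\d t$, so the stated constant $1$ requires $T\ge 1$ or dropping the normalization), and the paper's own proof glosses over it in exactly the same way.
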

	
Let us now define the (S)-MFE in the weak sense.
\begin{definition}[Strict MFE in the weak sense]\label{strict_MFE}
The couple $(\boldsymbol{\rho}^*,P^*)\in\Pc_2(\C\times\mathcal{B})\times\Pc_2(\Omega)$ is said to be a strict MFE in the weak sense {\rm((SW)-MFE)} for the MFG of controls with state reflection in \equref{cost_func_relaxed} and \defref{relaxed_control}  if {\rm(i)} $P^*$ is a strict control rule, i.e., $P^*\in R^{\rm s}(\boldsymbol{\xi}^*)$ with $\boldsymbol{\xi}^*=\mathscr{R}(\boldsymbol{\rho^*})$; {\rm(ii)} $P^*$ is optimal among strict control rules, i.e., $\mathcal{J}(P^*;\boldsymbol{\xi}^*)\leq \mathcal{J}(P;\boldsymbol{\xi}^*)$ for all $P\in R^{\rm s}(\boldsymbol{\xi}^*)$; {\rm(iii)} the consistency condition holds, i.e. $\boldsymbol{\rho}^*=P^*\circ (X,\alpha)^{-1}$ with $\alpha=(\alpha_t)_{t\in [0,T]}$ being the corresponding strict control in \defref{relaxed_control}.
\end{definition} 
	
\begin{definition}[Markovian strict MFE]\label{markovian}
The couple $(\boldsymbol{\rho}^*,P^*)\in\Pc_2(\C\times\mathcal{B})\times\Pc_2(\Omega)$ is said to be a Markovian strict MFE in the weak sense (Markovian {\rm (SW)-MFE}) if $(\boldsymbol{\rho}^*,P^*)$ is an (S)-MFE and there exists a Borel measurable mapping $\psi:[0,T]\times\R\to U$ such that $P^*(\Lambda_t(\d u)\d t=\delta_{\psi(t,X_t)}(\d u)\d t)=1$.
\end{definition}
\begin{remark}
When the mean field term only depends on the law of state variable and there is no reflection, \defref{relaxed_MFE} and \defref{strict_MFE} reduce respectively to the definitions of relaxed MFG solution and strict MFG solution given in Lacker \cite{Lacker}.
\end{remark}
	
\subsection{$N$-player game with state reflections}
This subsection is devoted to introducing the $N$-player game with state reflections in both strict and relaxed formulations.
	
We first give a strict formulation in the strong sense. Let the filtered probability space $(\Omega,\F,\Pb)$ support (i) $N$ i.i.d. $\R^2$-valued continuous processes $(W^1,A^1),\ldots,(W^N,A^N)$ with the same law of $\hat P$ (recall the probability measure $\hat P$ defined on $\C^W\times\tilde\Omega$ before \equref{CB_space}) such that $\E\left[\sup_{t\in [0,T]}|A^i_t|^p\right]<\infty$ for $i=1,\ldots,N$ and $W^1,\ldots,W^N$ are standard Brownian motions  (ii) $N$ i.i.d. $\F_0$-measurable scalar r.v.s $\eta^1,\ldots,\eta^N$ with the same law of $\eta$ such that $A_0^i\leq \eta^i$ for $i=1,\ldots,N$. Denote by $\U^N[0,T]$ the $N$-Cartesian product of $\U[0,T]$. Let $\Fb$ be the natural filtration generated by $(W^1,A^1),\dots,(W^N,A^N)$. Assume that $\Fb$ satisfies the usual conditions. A generic element in the $N$-player control set $\U^N[0,T]$ shall be denoted by $\boldsymbol{\alpha}=(\alpha^1,\ldots,\alpha^N)$.
	
The following assumption is imposed.
\begin{ass}\label{ass3}
The mappings $b(t,x,\rho,u)$ and $\sigma(t,x,\rho,u)$ are uniformly Lipschitz continuous in $(x,\rho)\in\R\times\Pc_2(\R\times U)$ in the sense that, there is $M>0$ independent of $(t,u)\in [0,T]\times U$ such that, for all $(x,\rho),(x',\rho')\in\R\times\Pc_2(\R\times U)$,
\begin{align*}
\left|b(t,x',\rho',u)-b(t,x,\rho,u)\right|+\left|\sigma(t,x',\rho',u)-\sigma(t,x,\rho,u)\right|\leq M\left(|x-x'|+\mathcal{W}_{2,\R\times U}(\rho,\rho')\right).
\end{align*}
\end{ass}
For any $\boldsymbol{\alpha}\in\U^N[0,T]$, by the classical SDE theory, there exists a unique $L^p$-integrable solution $Y^{\boldsymbol{\alpha}}=(Y^{\boldsymbol{\alpha},1},\ldots,Y^{\boldsymbol{\alpha},N})$ to the SDE system that, for $i=1,\ldots,N$,
\begin{align}\label{N_SDE}
\begin{cases}
\displaystyle\d Y_t^{\boldsymbol{\alpha},i}=b(t,X_t^{\boldsymbol{\alpha},i},\rho_t^N,\alpha_t^i)\d t+\sigma(t,X_t^{\boldsymbol{\alpha},i},\rho_t^N,\alpha_t^i)\d W_t^i,\quad Y_0^i=\eta^i,\\[0.6em]
\displaystyle X_t^{\boldsymbol{\alpha},i}=\Gamma(A^i,Y^{\boldsymbol{\alpha},i})_t,\quad\rho_t^N=\frac{1}{N}\sum_{i=1}^N\delta_{(X_t^{\boldsymbol{\alpha},i},\alpha_t^i)}.
\end{cases}
\end{align}
For any $i=1,\ldots,N$, we define the cost functional by
\begin{align}\label{N_cost}
J^i(\boldsymbol{\alpha}):=\E\left[\int_0^Tf\left(t,X_t^{\boldsymbol{\alpha},i},\rho_t^N,\alpha_t^i\right)\d t+\int_0^Tc(t,X_t^{\boldsymbol{\alpha},i})\d R_t^i+g\left(X^{\boldsymbol{\alpha},i}_{T},\mu_T^N\right)\right]
\end{align}
with $R^i:=X^{\boldsymbol{\alpha},i}-Y^{\boldsymbol{\alpha},i}$ and $\mu_T^N:=\frac1N\sum_{i=1}^N\delta_{X^{\boldsymbol{\alpha},i}_T}$. For any $\beta\in\U[0,T]$, we introduce the $N$-player control $(\boldsymbol{\alpha}^{-i},\beta)$ as usual by $(\boldsymbol{\alpha}^{-i},\beta):=(\alpha^1,\ldots,\alpha^{i-1},\beta,\alpha^{i+1},\ldots,\alpha^N)$.
	
Now, let us provide the definition of $\boldsymbol{\epsilon}$-Nash equilibrium in the  strong sense for the $N$-player game. 
\begin{definition}[$\boldsymbol{\epsilon}$-Nash equilibrium in the strong sense]\label{NE}
Let $\boldsymbol{\epsilon}=(\epsilon^1,\ldots,\epsilon^N)\in\R_+^N$. A control  $\boldsymbol{\alpha}\in \U^N[0,T]$ is called an $\boldsymbol{\epsilon}$-(open loop) Nash equilibrium of the $N$-player game \equref{N_SDE}-\equref{N_cost} if it holds that
\begin{align}\label{N_NE_strict}
J^i(\boldsymbol{\alpha})\leq \inf_{\beta\in\U[0,T]}J^i((\boldsymbol{\alpha}^{-i},\beta))+\epsilon^i,\quad \forall i=1,\ldots,N.
\end{align}
\end{definition}
A control $\boldsymbol{\alpha}\in\U^N[0,T]$ can induce a probability measure $\Pb\circ ((Y^{\boldsymbol{\alpha},i},\delta_{\alpha_t^i}(\d u)\d t,W^i,A^i)_{i=1}^N)^{-1}$ on $\Omega^N$. The definition of such probability measure on $\Omega^N$ is also invariant to the choice of the probability space $(\Omega,\F,\Fb,\Pb)$. Thus, in the sequel, we will consider the $N$-player game in the weak formulation. We adopt the notation $(\Omega^N,\F^N,\Fb^N)$ for the $N$-Cartesian product of $(\Omega,\F,\Fb)$ respectively. Similarly, the coordinate mappings are denoted by $(Y^i,\Lambda^i,W^i,A^i)_{i=1}^N$. By \lemref{Lipschitz_P}, we readily have the next result. 
\begin{lemma}\label{extension_ass_N}
Let \assref{ass3} hold. The  extensions $\tb(t,x,\xi,u)$ and $\ts(t,x,\xi,u)$ are uniformly Lipschitz continuous in $(x,\xi)\in\R\times\Pc_2(\R\times \Pc(U))$ such that there is $M>0$ independent of $(t,u)\in [0,T]\times U$ such that, for all $(x,\xi),(x',\xi')\in\R\times\Pc_2(\R\times\Pc(U))$,
\begin{align*}
\left|\tilde{b}(t,x',\xi',u)-\tilde{b}(t,x,\xi,u)\right|+\left|\tilde{\sigma}(t,x',\xi',u)-\tilde{\sigma}(t,x,\xi,u)\right|\leq M\left(|x-x'|+\mathcal{W}_{2,\R\times \Pc(U)}(\xi,\xi')\right).
\end{align*}
\end{lemma}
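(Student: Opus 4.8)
The plan is to obtain the Lipschitz estimate for the extended coefficients $\tb,\ts$ (the statement evidently concerns the extensions, consistently with the lemma's preamble) as a direct consequence of the corresponding estimate for $b,\sigma$ in \assref{ass3}, composed with the marginalization map $\mathscr{P}$ and the contraction property established in \lemref{Lipschitz_P}. The argument is a short chaining of three facts, with essentially no computation.

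First I would record the pointwise identity produced by the extension transformation. By the definition in \equref{eq:extension}, applied to the measure argument of $b$ and $\sigma$ with the remaining variables $(t,x,u)$ frozen, one has
\begin{align*}
	\tb(t,x,\xi,u)=b(t,x,\mathscr{P}(\xi),u),\qquad \ts(t,x,\xi,u)=\sigma(t,x,\mathscr{P}(\xi),u),
\end{align*}
for every $\xi\in\Pc_2(\R\times\Pc(U))$, where $\mathscr{P}(\xi)\in\Pc_2(\R\times U)$ is the push-forward $\mathscr{P}(\xi)(\d x,\d u)=\int_{\mathcal{M}(U)}q(\d u)\,\xi(\d x,\d q)$ introduced below \equref{eq:extension}. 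The only subtlety worth checking here is that freezing $(t,x,u)$ and extending only the measure slot indeed returns $b(t,x,\mathscr{P}(\xi),u)$; this is immediate from the pointwise nature of \equref{eq:extension}.

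Next, fixing $(t,u)\in[0,T]\times\R^l$ and two pairs $(x,\xi),(x',\xi')\in\R\times\Pc_2(\R\times\Pc(U))$, I would rewrite the differences of $\tb,\ts$ through the above identities and apply the Lipschitz bound of \assref{ass3} to $b,\sigma$ with $\rho=\mathscr{P}(\xi)$ and $\rho'=\mathscr{P}(\xi')$ in $\Pc_2(\R\times U)$, obtaining with the same constant $M$ (independent of $(t,u)$)
\begin{align*}
	\left|\tb(t,x',\xi',u)-\tb(t,x,\xi,u)\right|+\left|\ts(t,x',\xi',u)-\ts(t,x,\xi,u)\right|\leq M\left(|x-x'|+\mathcal{W}_{2,\R\times U}(\mathscr{P}(\xi),\mathscr{P}(\xi'))\right).
\end{align*}
Finally, I would invoke \lemref{Lipschitz_P}, namely $\mathcal{W}_{2,\R\times U}(\mathscr{P}(\xi),\mathscr{P}(\xi'))\leq\mathcal{W}_{2,\R\times\Pc(U)}(\xi,\xi')$, and substitute it into the previous display to reach the claimed bound. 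Because the whole proof is a verbatim composition of the definition of the extension, \assref{ass3}, and the contraction in \lemref{Lipschitz_P}, there is no genuine obstacle; the entire content of the lemma is the passage from the metric $\mathcal{W}_{2,\R\times U}$ on $\Pc_2(\R\times U)$ to the lifted metric $\mathcal{W}_{2,\R\times\Pc(U)}$, which is precisely supplied by \lemref{Lipschitz_P}.
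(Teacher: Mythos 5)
Your proposal is correct and follows exactly the route the paper intends: the paper gives no written proof, stating only that the result follows ``readily'' from \lemref{Lipschitz_P}, and your chaining of the extension identity $\tb(t,x,\xi,u)=b(t,x,\mathscr{P}(\xi),u)$, the Lipschitz bound of \assref{ass3}, and the contraction $\mathcal{W}_{2,\R\times U}(\mathscr{P}(\xi),\mathscr{P}(\xi'))\leq\mathcal{W}_{2,\R\times\Pc(U)}(\xi,\xi')$ is precisely the omitted argument.
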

	
Now, we introduce the admissible $N$-player relaxed control.
\begin{definition}[$N$-player relaxed control]\label{relaxed_strategy}
We call a probability measure $P^N\in\Pc_2(\Omega^N)$ on $(\Omega^N,\F^N)$ an admissible $N$-player relaxed control rule (denoted by $R_N$) if it holds that {\rm(i)} $P^N\circ (Y_0^1,\cdots,Y_0^N) ^{-1}=\Law^{\Pb}(\eta^1,\cdots,\eta^N)$; {\rm(ii)} $P^N(Y_0^i\geq A_0^i)=1$, $\forall i=1,\ldots,N$; {\rm(iii)} the restriction of $P^N$ to $(\C^W\times\tilde\Omega)^N$, $P^N|_{(\C^W\times\tilde\Omega)^N}$ agrees with the joint law $\Pb\circ \left((W^1,A^1),\cdots,(W^N,A^N)\right)^{-1}$ (in the strong sense); {\rm(iv)} for all $\phi\in C_b^2(\R^N\times\R^N)$, the process
\begin{align*}
{\tt M}^N\phi(t)&:=\phi(Y_t^1,\cdots,Y_t^N,W_t^1,\cdots,W_t^N)\nonumber\\
&\quad-\sum_{i=1}^N\left(\int_0^t\int_U\bar{\mathbb{L}}^N_i\phi(s,X_s^1,\cdots,X_s^N,Y_s^1,\cdots,Y_s^N,W_s^1,\cdots,W_s^N,\xi_s^N,u)\Lambda^i_s(\d u)\d s\right)
\end{align*}
is a $(P^N,\Fb^N)$-martingale. Here, the infinitesimal generator acting on $\phi\in C_b^2(\R^N\times\R^N)$ is defined by, for $(t,\boldsymbol{x},\boldsymbol{y},\boldsymbol{w},\xi,u)\in [0,T]\times\R^N\times\R^N\times\R^N\times\Pc_2(\R\times\Pc(U))\times U$ with $\boldsymbol{x}=(x_1,\cdots,x_N)$ $\boldsymbol{y}=(y_1,\cdots,y_N)$ and $\boldsymbol{w}=(w_1,\cdots,w_N)$,
\begin{align*}
\bar{\mathbb{L}}^N_i\phi(t,\boldsymbol{x},\boldsymbol{y},\boldsymbol{w},\xi,u):=\bar b\left(t,x_i,\xi,u\right)^{\T}\nabla_i\phi(\boldsymbol{y},\boldsymbol{w})+\frac12\tr\left(\bar\sigma\bar\sigma^{\T}\left(t,x_i,\xi,u\right)\nabla_i^2\phi(\boldsymbol{y},\boldsymbol{w})\right),
\end{align*}
where the coefficients
\begin{align*}
\nabla_i\phi(\boldsymbol{y},\boldsymbol{w})=\begin{pmatrix}
\pa_{y_i}\phi(\boldsymbol{y},\boldsymbol{w})\\
\pa_{w_i}\phi(\boldsymbol{y},\boldsymbol{w})
\end{pmatrix},\qquad \nabla_i^2\phi(\boldsymbol{y},\boldsymbol{w})=\begin{pmatrix} \pa_{y_iy_i}^2\phi(\boldsymbol{y},\boldsymbol{w}), &\pa_{y_iw_i}^2\phi(\boldsymbol{y},\boldsymbol{w})\\
\pa_{w_iy_i}^2\phi(\boldsymbol{y},\boldsymbol{w}),
&\pa_{w_iw_i}^2\phi(\boldsymbol{y},\boldsymbol{w})
\end{pmatrix},
\end{align*}
$(\bar b,\bar\sigma)$ is defined in \defref{relaxed_control}
and $\boldsymbol{\xi}^N=\frac1N\sum_{i=1}^N\delta_{(X^i,\Lambda^i)}$, $X^i(\omega)=\Gamma(A^i(\omega),Y^i(\omega)))$ with $\xi_t^{N}$ being the $t$-marginal distribution and the mapping $\Gamma:\mathcal{D}\mapsto\mathcal{C}$ being defined by \equref{eq:Skorohodmap}.
		Furthermore, if there exists an $\mathbb{F}^N$-progressively measurable $U^N$-valued process $\boldsymbol{\alpha}=(\alpha^1,\ldots,\alpha^N)=\left((\alpha^1_t)_{t\in [0,T]},\ldots,(\alpha_t^N)_{t\in [0,T]}\right)$ on $\Omega^N$ such that $P^N(\Lambda^i_t(\d t)\d t=\delta_{\alpha_t^i}(\d u)\d t,i=1,\ldots,N)=1$, we say that $P^N$ corresponds to a strict control $\boldsymbol{\alpha}$. The set of all $N$-player strict controls is denoted by $R^{\rm s}_N$.
	\end{definition}
	
	\begin{remark}
		The conditions (i) and (iii) of \defref{relaxed_strategy} ensure that $(Y_0^i)_{i=1}^N$, $ (A^i)_{i=1}^N$ are i.i.d. under $P^N$, respectively.
    \end{remark}
	Similarly, we have the following standard martingale measure representation for the $N$-player relaxed control in the weak formulation whose proof is omitted. 
	\begin{lemma}\label{moment_p_N}
		We have that $P^N\in R_N$ iff there exists a filtered probability space $(\Omega',\F',\Fb'=(\F'_t)_{t\in[0,T]},P')$ supporting a $\Pc(U)^N$-valued $\Fb'$-progressively measurable process $\boldsymbol{\Lambda}=(\Lambda^i)_{i=1}^N=((\Lambda_t^i)_{t\in [0,T]})_{i=1}^N$, N scalar $\Fb'$-adapted processes $(Y^{\boldsymbol{\Lambda},i})_{i=1}^N=((Y_t^{\boldsymbol{\Lambda},i})_{t\in[0,T]})_{i=1}^N$, $N$ i.i.d. one dimensional standard $\F_t'$-Brownian motion $(W^i)_{i=1}^N=((W_t^i)_{t\in [0,T]})_{i=1}^N$, N independent $\Fb'$-martingale measures $({\cal M}^i)_{i=1}^N$ on $U\times[0,T]$, with intensity $(\Lambda_t^i(\d u)\d t)_{i=1}^N$ and N independent real-valued $\mathbb{F}'$-adapted continuous processes $(A^i)_{i=1}^N=((A_t^i)_{t\in[0,T]})_{i=1}^N$ such that $P^N=P'\circ ((Y^{\boldsymbol{\Lambda},i},\Lambda^i,W^i,A^i)_{i=1}^N)^{-1}$,  and it holds that {\rm(i)} $P'\circ(Y_0^{\boldsymbol{\Lambda},1},\cdots,Y_0^{\boldsymbol{\Lambda},N})^{-1}=\Law^{\mathbb{P}}(\eta^1,\cdots,\eta^N)$; {\rm(ii)} $P'(Y_0^{\boldsymbol{\Lambda},i}\geq A_0^i)=1$,~$\forall i=1,\ldots,N$; {\rm(iii)} $W_t^i=\int_0^t\int_U\mathcal{M}^i(\d u,\d t),~\forall t\in [0,T],~i=1,\ldots,N$,~$P'$-$\as$;
        {\rm(iv)} the state process obeys that, $P'$-a.s.,
\begin{align}\label{SDE_N}
\d Y^{\boldsymbol{\Lambda},i}_t&=\int_U\tb(t,X^{\boldsymbol{\Lambda},i}_t,\xi_t^N,u)\Lambda_t^i(\d u)\d t+\int_U\ts(t,X^{\boldsymbol{\Lambda},i}_t,\xi_t^N,u){\cal M}^i(\d u,\d t),\nonumber\\ X^{\boldsymbol{\Lambda},i}_t&=\Gamma(A^i,Y^{\boldsymbol{\Lambda},i})_t,\quad \boldsymbol{\xi}^N:=\frac{1}{N}\sum_{i=1}^N\delta_{(X^{\boldsymbol{\Lambda},i},\Lambda^i)}
\end{align}         
with $\xi_t^N$ being the $t$-marginal distribution of $\boldsymbol{\xi}^N$. Moreover, there exists a constant $C>0$ depending on $(M,\Law^{\mathbb{P}}(\eta),T)$ such that, for all $i=1,\ldots,N$,	$\E^{P'}\left[\sup_{t\in[0,T]}|Y_t^{\Lambda,i}|^p\right]\leq C$,  where $M$ is given in \assref{ass1}.
\end{lemma}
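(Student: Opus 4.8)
The plan is to adapt the single-player argument behind \lemref{moment_p} to the product setting, the only genuinely new point being the bookkeeping of the $N$ independent sources of randomness; everything else is a routine lift to $\Omega^N$. I would prove the two implications separately.

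For the ``only if'' direction, start from $P^N\in R_N$ and take $(\Omega^N,\F^N,\Fb^N,P^N)$ itself as the candidate space, with coordinate processes $(Y^i,\Lambda^i,W^i,A^i)_{i=1}^N$. Condition (iii) of \defref{relaxed_strategy} gives that $((W^i,A^i))_{i=1}^N$ are i.i.d.\ with the prescribed joint law, so the $A^i$ are independent continuous processes and (i)--(ii) are immediate; it remains to produce the martingale measures and identify the $W^i$. Testing the martingale property (iv) against $\phi$ depending only on a single pair $(y_i,w_i)$ decouples the generator into the blocks $\bar{\mathbb{L}}_i^N$, and this block structure shows that the continuous-martingale part of $(Y^i,W^i)$ has quadratic-covariation matrix $\int_0^\cdot\int_U \bar\sigma\bar\sigma^\T(s,X_s^i,\xi_s^N,u)\,\Lambda_s^i(\d u)\,\d s$. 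Since the lower-right entry of $\bar\sigma\bar\sigma^\T$ is identically $1$ while $\Lambda_s^i$ is a probability measure, the $W^i$-marginal has quadratic variation $t$ and is a Brownian motion by L\'evy's characterisation, and the absence of cross-index second derivatives in $\bar{\mathbb{L}}^N=\sum_i\bar{\mathbb{L}}_i^N$ yields the mutual orthogonality of the $N$ martingale parts.

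The construction of the orthogonal martingale measures ${\cal M}^i$ on $U\times[0,T]$ with intensity $\Lambda_t^i(\d u)\,\d t$ is the technical core, and I would invoke the representation theorem of El~Karoui et al.~\cite{Karoui1} (as in the proof of \lemref{moment_p}). In general this step requires enlarging the probability space by an independent auxiliary noise in order to disintegrate the finite-dimensional driving martingale into a measure-valued object carrying the full intensity $\Lambda_t^i(\d u)\,\d t$; this is precisely why the statement asserts the existence of \emph{some} filtered space rather than working on $\Omega^N$ directly. Setting $W_t^i:=\int_0^t\int_U{\cal M}^i(\d u,\d s)$ recovers (iii), and reading off the drift of $Y^i$ from $\bar{\mathbb{L}}_i^N$ gives the dynamics \equref{SDE_N}, with $X^i=\Gamma(A^i,Y^i)$ by the convention fixed in \defref{relaxed_strategy}.

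For the ``if'' direction I would run the computation in reverse: given the data on $(\Omega',\F',\Fb',P')$, define $P^N:=P'\circ((Y^{\boldsymbol{\Lambda},i},\Lambda^i,W^i,A^i)_{i=1}^N)^{-1}$ and verify (i)--(iv) of \defref{relaxed_strategy}. Conditions (i)--(iii) are read off directly from the hypotheses, while for (iv) I would apply It\^o's formula to $\phi(Y_t^1,\ldots,Y_t^N,W_t^1,\ldots,W_t^N)$ along \equref{SDE_N}; the bounded-variation part reproduces exactly $\sum_{i=1}^N\int_0^t\int_U\bar{\mathbb{L}}_i^N\phi\,\Lambda_s^i(\d u)\,\d s$, so ${\tt M}^N\phi$ equals the stochastic-integral part, which is a genuine $(P^N,\Fb^N)$-martingale since $\nabla\phi$ is bounded and the integrands are controlled by the moment estimate. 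That estimate $\E^{P'}[\sup_t|Y_t^{\Lambda,i}|^p]\leq C$ follows exactly as for \equref{eq:momentp00}: apply the BDG inequality and the linear-growth bound (B3) to \equref{SDE_N}, pass from $Y^i$ to $X^i=\Gamma(A^i,Y^i)$ via the Lipschitz property \equref{Lipschitz_Gamma} of $\Gamma$, and close with Gronwall's lemma, with the constant uniform in $i$ because the coefficients and initial laws are identical across players. I expect the martingale-measure representation together with the accompanying space enlargement to be the main obstacle, as it is the only ingredient that is not a mechanical product-space version of the one-player lemma.
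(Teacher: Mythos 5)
The paper itself omits the proof of this lemma as ``classical'' (the single-player analogue \lemref{moment_p} is likewise left unproved), so your equivalence argument --- reading off the quadratic covariation from the block-diagonal generator, applying L\'evy's characterisation, invoking the El Karoui--M\'el\'eard martingale-measure representation with an enlargement of the space, and running It\^o's formula in reverse for the converse --- is exactly the standard route the authors have in mind, and that part is fine.

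There is, however, a genuine gap in your moment estimate. You propose to ``apply BDG and the linear-growth bound (B3) to \equref{SDE_N} \ldots and close with Gronwall's lemma, with the constant uniform in $i$ because the coefficients and initial laws are identical across players.'' But (B3) bounds $|\tb|+|\ts|$ by $M(1+|x|+M_2(\xi_t^N)+|u|)$, and $M_2(\xi_t^N)^p$ is controlled by $\frac1N\sum_{j=1}^N\bigl(|X_t^{\boldsymbol{\Lambda},j}|+\mathcal{W}_{2,U}(\Lambda_t^j,\delta_{u_0})\bigr)^p$, i.e.\ by the states of \emph{all} players. So the single-index inequality
\begin{align*}
\E^{P'}\Bigl[\sup_{s\le t}\bigl|Y_s^{\boldsymbol{\Lambda},i}\bigr|^p\Bigr]\le C\Bigl(1+\E^{P'}\Bigl[\int_0^t\bigl|Y_s^{\boldsymbol{\Lambda},i}\bigr|^p\d s\Bigr]+\E^{P'}\Bigl[\int_0^t M_2(\xi_s^N)^p\d s\Bigr]\Bigr)
\end{align*}
does not close under Gronwall for a fixed $i$: the last term is not a function of player $i$ alone, and symmetry of the coefficients is not the issue. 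The missing step is to first \emph{average} this inequality over $i=1,\ldots,N$, which produces a closed Gronwall inequality for $\frac1N\sum_i\E^{P'}[\sup_{s\le t}|Y_s^{\boldsymbol{\Lambda},i}|^p]$ (using the compactness of $U$ to absorb the control contribution to $M_2(\xi_s^N)$), hence a bound $\E^{P'}[\sup_{t}M_2(\xi_t^N)^p]\le C$ uniform in $N$; only then can this bound be re-inserted into the displayed inequality for a single $i$ and Gronwall applied again to obtain $\E^{P'}[\sup_t|Y_t^{\boldsymbol{\Lambda},i}|^p]\le C$. This two-pass argument is exactly what the authors' (suppressed) proof does, and without it your estimate as written does not go through.
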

	
\begin{remark}
Using Lemma 3.2 in \cite{Lacker2}, Eq.~\equref{SDE_N} admits a unique strong solution by \assref{ass1} and \lemref{extension_ass_N}.
\end{remark}
	
	For $P\in R_N^{\rm s}$, we introduce the subset $R^{\rm s}_{i,N}(P)$ of $R_N^{\rm s}$ in order to describe the solution in the  weak sense, the scenario when the player varies his control while the $N-1$ remaining players keep their strategies unchanged.
	\begin{definition}\label{RsiN}
For $P^N\in R_N^{\rm s}$, we say $Q^N\in R^{\rm s}_{i,N}(P^N)$ if there exists a filtered probability space $(\Omega',\F',\Fb'=(\F'_t)_{t\in[0,T]},P')$ supporting $N+1$ $U$-valued $\Fb'$-progressively measurable process $((\alpha^j)_{j=1}^N,\beta)=(((\alpha_t^j)_{t\in [0,T]})_{j=1}^N,(\beta_t)_{t\in [0,T]})$, $N$ i.i.d. real-valued $\F_0'$-measurable r.v. $(\eta^j)_{j=1}^N$ with law $\Law^{\Pb}(\eta)$, $N$ independent scalar $\Fb'$-Brownian motions $((W^j)_{t\in [0,T]})_{j=1}^N$ and $N$ independent real-valued $\mathbb{F}'$-adapted continuous process $(A^j)_{j=1}^N=((A_t^j)_{t\in[0,T]})_{i=1}^N$ and $((Y_t^j)_{t\in [0,T]})_{j=1}^N,((Z_t^j)_{t\in [0,T]})_{j=1}^N$ (strongly) solving the SDE system:
		\begin{align*}
			Y_t^j&=\eta^j+\int_0^t b(s,\Gamma(A^j,Y^j)_s,\rho^N_s,\alpha^j_s)\d s+\int_0^t\sigma(s,\Gamma(A^j,Y^j)_s,\rho^N_s,\alpha^j_s)\d W_s^j,~j=1,\ldots,N,\\
			Z_t^j&=\eta^j+\int_0^t b(s,\Gamma(A^j,Z^j)_s,\theta^N_s,\alpha^j_s)\d s+\int_0^t\sigma(s,\Gamma(A^j,Z^j)_s,\theta^N_s,\alpha^j_s)\d W_s^j,~j\neq i,\\
			Z_t^i&=\eta^i+\int_0^t b(s,\Gamma(A^i,Z^i)_s,\theta^N_s,\beta_s)\d s+\int_0^t\sigma(s,\Gamma(A^i,Z^i)_s,\theta^N_s,\beta_s)\d W_s^i,
		\end{align*}
		with $\rho_t^N=\frac1N\sum_{j=1}^N\delta_{(\Gamma(A^j,Y^j)_t,\alpha_t^j)}$ and $ \theta_t^N=\frac1N\left(\sum_{j\neq i}\delta_{(\Gamma(A^j,Z^j)_t,\alpha_t^j)}+\delta_{(\Gamma(A^i,Z^i)_t,\beta_t)}\right)$ such that
		\begin{align*}
			P^N&=P'\circ \left((Y^j)_{j=1}^N,(\delta_{\alpha^j_t}(\d u)\d t)_{j=1}^N,(W^j)_{j=1}^N,(A^j)_{j=1}^N\right)^{-1},\nonumber\\ 
			Q^N&=P'\circ \left((Z^j)_{j=1}^N,(\delta_{\boldsymbol{\alpha}_t^{-i}}(\d u)\d t,\delta_{\beta_t}(\d u)\d t),(W^j)_{j=1}^N,(A^j)_{j=1}^N\right)^{-1},  
		\end{align*}
		where we denote
		\begin{align*}
			\left(\delta_{\boldsymbol{\alpha}_t^{-i}}(\d u),\delta_{\beta_t}(\d u)\right):=\left(\delta_{\alpha^1_t}(\d u),\ldots,\delta_{\alpha^{i-1}_t}(\d u),\delta_{\beta_t}(\d u),\delta_{\alpha^{i+1}_t}(\d u),\ldots, \delta_{\alpha^N_t}(\d u)\right).    
		\end{align*}
		
	\end{definition}

\begin{remark}
It follows from \lemref{moment_p_N} that $P^N\in R_{i,N}^{\rm s}(P^N)$ for any $P^N\in R_N^{\rm s}$, and hence $R_{i,N}^{\rm s}(P^N)$ is nonempty.
\end{remark}
	
We next introduce the $\boldsymbol{\epsilon}$-Nash equilibrium in the weak sense in the $N$-player game. Let us first consider the cost functional for player $i$ that
\begin{align}\label{N_cost_relaxed}
\mathcal{J}_i(P^N):=\E^{P^N}\left[\Delta_i(\boldsymbol{\xi}^N)\right],
\end{align}
where $\boldsymbol{\xi}^N=\frac1N\sum_{i=1}^N\delta_{(X^i,\Lambda^i)}$ and $\Delta_i(\boldsymbol{\xi})$ is defined by
\begin{align}\label{Delta_i}
\Delta_i(\boldsymbol{\xi}):=\int_0^T\int_U\tf(t,X_t^i,\xi_t,u)\Lambda_t^i(\d u)\d t+\int_0^Tc(t,X_t^i)\d R^i_t+g(X_T^i,\mu_T),~~\forall\boldsymbol{\xi}\in \Pc_2(\C\times\mathcal{Q}),
\end{align}
where $\mu_T$ is the first marginal distribution of $\xi_T$ and $R^i=X^i-Y^i$.
\begin{definition}[$\boldsymbol{\epsilon}$-Nash equilibrium in the weak sense for $N$-player game]\label{NE_weak}
For $\boldsymbol{\epsilon}=(\epsilon^1,\ldots,\epsilon^N)\in\R_+^N$, a strict control $P^N\in R_N^{\rm s}$ is an $\boldsymbol{\epsilon}$-(open loop) Nash equilibrium in the weak sense for the $N$-player game in \defref{relaxed_strategy} and \equref{N_cost_relaxed} if it holds that
\begin{align}\label{N_NE}
\mathcal{J}_i(P^N)\leq \inf_{Q^N\in R^{\rm s}_{i,N}(P^N)}\mathcal{J}_i(Q^N)+\epsilon^i,\quad\forall i=1,\ldots,N,
\end{align}
where the set $R_{i,N}^{\rm s}(P^N)$ is given in \defref{RsiN}.
\end{definition}
	
\begin{remark}
We introduce the $\boldsymbol{\epsilon}$-Nash equilibrium in the weak sense to cope with the weak formulation that we employed. This definition serves to generalize the notion of Nash equilibrium in the strong sense. Moreover, $\boldsymbol{\epsilon}$-Nash equilibrium in the weak sense (\defref{NE_weak}) implies $\boldsymbol{\epsilon}$-Nash equilibrium in the strong sense (\defref{NE}) because if $\boldsymbol{\alpha}=(\alpha^1,\ldots,\alpha^N)$ is an admissible $N$-player control in the strong sense, with $(X^1,\ldots,X^N)$ being the corresponding state process and if $P^N=\Pb\circ((Y^{\boldsymbol{\alpha},i},\delta_{\alpha_t^i}(\d u)\d t,W^i,A^i)_{i=1}^N)^{-1}$ constitutes an $\boldsymbol{\epsilon}$-Nash equilibrium in the weak sense, then by definition, $\boldsymbol{\alpha}$ also yields an $\boldsymbol{\epsilon}$-Nash equilibrium in the strong sense.
\end{remark}
For the rest of the paper, we only consider the $\boldsymbol{\epsilon}$-Nash equilibrium in the weak sense in \defref{NE_weak} for the $N$-player game.

	\section{Main Results}\label{sec:existence}

\subsection{Existence of MFEs}
	
This subsection presents our main results on the existence of an (R)-MFE in \defref{relaxed_MFE} as well as the existence of a Markovian (SW)-MFE in \defref{markovian}.
	
\begin{theorem}\label{existence_RMFE}
Let \assref{ass1} hold. There exists an {\rm(R)-MFE}.	
\end{theorem}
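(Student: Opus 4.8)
The plan is to realize an (R)-MFE as a fixed point of the set-valued map $\textsf{R}^*$ defined in \equref{eq:Kstar} and to invoke the Kakutani--Fan--Glicksberg fixed-point theorem. To this end, I would first isolate a nonempty, convex, compact subset $\mathcal{K}\subset\Pc_2(\C\times\mathcal{Q})$ that is stable under $\textsf{R}^*$. The $\mathcal{Q}$-marginal lives in the already compact space $\mathcal{Q}$, so the only work is to control the $\C$-marginal: using the uniform $p$-th moment bound from \lemref{moment_p} (with $p>2$) together with the Lipschitz property \equref{Lipschitz_Gamma} of the Skorokhod map and the linear growth {\rm(B3)} of $\tb,\ts$, one obtains a uniform modulus-of-continuity estimate for the trajectories $X=\Gamma(A,Y)$. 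This yields tightness in $\C$ and hence relative compactness of $\textsf{R}^*(\boldsymbol{\xi})$ in $\Pc_2(\C\times\mathcal{Q})$; defining $\mathcal{K}$ by the corresponding moment and support constraints, and noting that the bound $C_1$ in \lemref{moment_p} depends only on $M,\Law^{\Pb}(\eta),T$ once the consistency relation $\boldsymbol{\xi}=P\circ(X,\Lambda)^{-1}$ holds, makes $\mathcal{K}$ self-consistent and invariant. This is the content of \lemref{compact_set}.

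Next I would verify the pointwise hypotheses of the fixed-point theorem. Nonemptiness of $\textsf{R}^*(\boldsymbol{\xi})=R_{\rm opt}(\boldsymbol{\xi})\circ(X,\Lambda)^{-1}$ follows once $R(\boldsymbol{\xi})$ is shown to be compact and the cost $P\mapsto\mathcal{J}(P;\boldsymbol{\xi})$ lower semicontinuous, so that the infimum $v^*(\boldsymbol{\xi})$ is attained. For convexity, observe that the martingale constraints defining $R(\boldsymbol{\xi})$ in \defref{relaxed_control} are linear in $P$ and the cost $\mathcal{J}(\cdot;\boldsymbol{\xi})$ is affine in $P$; hence $R_{\rm opt}(\boldsymbol{\xi})$ is convex, and since the pushforward $P\mapsto P\circ(X,\Lambda)^{-1}$ is linear, so is $\textsf{R}^*(\boldsymbol{\xi})$. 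Closedness of each image is inherited from the closedness of $R_{\rm opt}(\boldsymbol{\xi})$ within the compact $\mathcal{K}$.

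The crux is the closed-graph (upper hemicontinuity) property: whenever $\boldsymbol{\xi}^n\to\boldsymbol{\xi}$ and $\boldsymbol{\nu}^n\in\textsf{R}^*(\boldsymbol{\xi}^n)$ with $\boldsymbol{\nu}^n\to\boldsymbol{\nu}$, one must show $\boldsymbol{\nu}\in\textsf{R}^*(\boldsymbol{\xi})$. I would lift each $\boldsymbol{\nu}^n$ to an optimal rule $P_n\in R_{\rm opt}(\boldsymbol{\xi}^n)$, extract a weakly convergent subsequence $P_n\Rightarrow P$ using the tightness afforded by $\mathcal{K}$, pass to the limit in the martingale problem to conclude $P\in R(\boldsymbol{\xi})$, and finally show $P$ is optimal by combining upper semicontinuity of the limiting cost with lower semicontinuity of $\boldsymbol{\xi}\mapsto v^*(\boldsymbol{\xi})$. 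The main obstacle here---and the reason the argument departs from Lacker \cite{Lacker}---is that $\boldsymbol{\xi}^n\to\boldsymbol{\xi}$ in $\Pc_2(\C\times\mathcal{Q})$ is strictly weaker than $\sup_t\mathcal{W}_{2,\R\times\Pc(U)}(\xi_t^n,\xi_t)\to0$, since the control marginal need not be continuous in time, so the coefficients $\tb(t,\cdot,\xi_t^n,\cdot),\ts,\tf$ cannot be handled by naive marginal convergence. This is precisely where the extension transformation \equref{eq:extension} and the separation conditions {\rm(A1)}, {\rm(A5)} enter: through \lemref{Lipschitz_P} and the decomposition in \lemref{extension_ass}, the measure dependence is routed through $\mathscr{P}(\xi)$ and the bounded, {\rm(A5)}-Lipschitz kernel $(b_3,\sigma_3,f_3)$, so that \lemref{closed} delivers the continuity $\E[\sup_t|X_t^{\boldsymbol{\xi}^n}-X_t^{\boldsymbol{\xi}}|^2]\to0$ needed to pass to the limit in both the state dynamics and the cost. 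With the closed graph established, Kakutani--Fan--Glicksberg yields $\boldsymbol{\xi}^*\in\textsf{R}^*(\boldsymbol{\xi}^*)$; selecting $P^*\in R_{\rm opt}(\boldsymbol{\xi}^*)$ with $P^*\circ(X,\Lambda)^{-1}=\boldsymbol{\xi}^*$ produces the desired (R)-MFE $(\boldsymbol{\xi}^*,P^*)$.
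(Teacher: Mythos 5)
Your proposal follows the paper's proof essentially step for step: the same fixed-point formulation via $\textsf{R}^*$, the same invariant compact convex set built from the $p$-th moment bound of \lemref{moment_p} (the paper's \lemref{compact_set}), the same compactness/convexity of $R_{\rm opt}(\boldsymbol{\xi})$ (\lemref{compactness}), and the same identification of the closed-graph step as the crux, resolved exactly as in \lemref{closed} through the extension transformation, \lemref{Lipschitz_P}, and the separation conditions (A1), (A5).

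There is one genuine imprecision in your optimality-in-the-limit step. Having extracted $P_n\in R_{\rm opt}(\boldsymbol{\xi}^n)$ with $P_n\Rightarrow P$ and shown $P\in R(\boldsymbol{\xi})$, what you need to conclude $\mathcal{J}(P;\boldsymbol{\xi})=v^*(\boldsymbol{\xi})$ is $\limsup_n v^*(\boldsymbol{\xi}^n)\le v^*(\boldsymbol{\xi})$, i.e.\ \emph{upper} semicontinuity of the value function; the lower-semicontinuity direction you invoke is the trivial one that follows automatically from the closed graph and joint continuity of $\mathcal{J}$. The nontrivial ingredient behind upper semicontinuity is the \emph{lower hemicontinuity} of the set-valued map $\boldsymbol{\xi}\mapsto R(\boldsymbol{\xi})$: for any $Q\in R(\boldsymbol{\xi})$ one must produce $Q_n\in R(\boldsymbol{\xi}^n)$ with $Q_n\to Q$, so that $v^*(\boldsymbol{\xi}^n)\le\mathcal{J}(Q_n;\boldsymbol{\xi}^n)\to\mathcal{J}(Q;\boldsymbol{\xi})$. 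The paper supplies this in \lemref{continuity} by re-solving the controlled SDE strongly on the same stochastic basis (same martingale measure, control and reflection boundary) with the measure argument frozen at $\xi^n_t$, and using the Lipschitz/convergence estimates from \lemref{extension_ass} and the proof of \lemref{closed} together with the BDG inequality; it then deduces continuity of $v^*$ and upper semicontinuity of $\textsf{R}_{\rm opt}$ via Theorem 5.7 of El Karoui et al.\ (\lemref{u.s.c.}). Your argument needs this construction spelled out; once it is added, the rest goes through as you describe.
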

	
The proof of \thmref{existence_RMFE} can be sketched as three main steps. The detailed and lengthy proof is given in subsection \ref{proof:thm1}. According to the discussion below Definition~\ref{relaxed_MFE}, it suffices to identify the fixed point of the mapping $\mathcal{R}^*$ defined in \eqref{eq:Kstar}. For an arbitrary  $\Pc_2(\C\times\mathcal{Q})$-valued probability measure	$\boldsymbol{\xi}$, the roadmap for achieving this goal is shown as below:	\ \\
\indent\textbf{Step-(i)} Show that $R_{\rm opt}(\boldsymbol{\xi})$ defined by \eqref{optimal_rule} is a (nonempty) compact and convex subset of the set$R(\boldsymbol{\xi})$ of relaxed controls (\lemref{compactness} in subsection \ref{proof:thm1}); \\
\indent\textbf{Step-(ii)} Prove that the set-valued mapping $\boldsymbol{\xi} \mapsto R_{\rm opt}(\boldsymbol{\xi})$ is upper semi-continuous by first showing that the set-valued mapping $\boldsymbol{\xi} \mapsto R(\boldsymbol{\xi})$ is continuous (\lemref{closed}, \lemref{continuity} and \lemref{u.s.c.} in subsection \ref{proof:thm1});\\
\indent\textbf{Step-(iii)} Construct a compact subset $\mathcal{M}$ to which the restriction of the set-valued mapping $\mathcal{R}^*$ is a self-mapping and then apply Kakutani's fixed point theorem to $\mathcal{R}^*|_{\mathcal{M}}$ to establish the existence of the fixed point (\lemref{compact_set} in subsection \ref{proof:thm1}).
\begin{remark}\label{existence_Lip}
We can establish the same existence result in \thmref{existence_RMFE} under \assref{ass1} and \assref{ass3} without imposing the separation condition in {\rm (A1)} and {\rm (A5)} of \assref{ass1} because \lemref{closed} can still be derived (see \remref{closed_Lip}), and the remaining proofs follow the same arguments as in \thmref{existence_RMFE}.
\end{remark}

To further establish the existence of a Markovian (SW)-MFE in \defref{markovian}, we also need some additional assumption stated as below.      
\begin{ass}\label{ass2}
\begin{itemize}
\item [{\rm (B1)}] There exist measurable mappings $b_0,\sigma_0,f_0:[0,T]\times\R\times\Pc_2(\R)\times U$ and $\lambda_1,\lambda_2,\lambda_3\in\R$  such that, for all $(t,x,\rho,u)\in [0,T]\times\R\times\Pc_2(\R\times U)\times U$,
\begin{align*}
b(t,x,\rho,u)&=b_0(t,x,\mu,u)+\lambda_1\int_{\R\times U}b_0(t,y,\mu,u)\rho(\d y, \d u),\\
\sigma^2(t,x,\rho,u)&=\sigma_0^2(t,x,\mu,u)+\lambda_2\int_{\R\times U}\sigma^2_0(t,y,\mu,u)\rho(\d y, \d u),\\	f(t,x,\rho,u)&=f_0(t,x,\mu,u)+\lambda_3\int_{\R\times U}f_0(t,y,\mu,u)\rho(\d y, \d u),
\end{align*}
where $\mu\in\Pc_2(\R)$ is the first marginal of $\rho$.
\item [{\rm (B2)}] For any $(t,x,\mu)\in[0,T]\times\R\times\Pc_2(\R)$, the following set is convex:
\begin{align*}
K(t,x,\mu):=\left\{(b_0(t,x,\mu,u),\sigma_0^2(t,x,\mu,u),z);~ z\geq f_0(t,x,\mu,u),~u\in U\right\}.    
\end{align*}
\end{itemize}
\end{ass}
	
       
First, note that, for any $\boldsymbol{\xi}\in\Pc_2(\C\times\mathcal{Q})$, every admissible strict control (in the weak sense) $P\in R^{\rm s}(\boldsymbol{\xi})$ is an admissible relaxed control by construction, i.e., $P\in R(\boldsymbol{\xi})$, and hence 
 \begin{align*}
 \inf_{P\in R^{\rm s}(\boldsymbol{\xi})}\mathcal{J}(P,\boldsymbol{\xi})\geq \inf_{P\in R(\boldsymbol{\xi})}\mathcal{J}(P,\boldsymbol{\xi}).
\end{align*}
The next proposition shows the reverse inequality.
\begin{prop}\label{measurable}
Let \assref{ass2} hold. For any $P\in R(\boldsymbol{\xi})$, there exists some $Q\in R^{\rm s}(\boldsymbol{\xi})$ such that $\mathcal{J}(Q;\boldsymbol{\xi})\leq \mathcal{J}(P;\boldsymbol{\xi})$. Moreover, if $(\boldsymbol{\xi}^*,P^*)$ is an {\rm (R)-MFE}, then there exist $\boldsymbol{\rho}^*\in\Pc_2(\C\times\mathcal{B})$ with $\tilde{\boldsymbol{\xi}}=\mathscr{R}(\boldsymbol{\rho}^*)$ and $Q^*\in R^{\rm s}(\tilde{\boldsymbol{\xi}})$ such that $(\boldsymbol{\rho}^*,Q^*)$ is an (SW)-MFE and the first marginal distributions of $\boldsymbol{\rho}^*$ and $\tilde{\boldsymbol{\xi}}$ coincide, i.e., $\boldsymbol{\rho}^*|_{\C}=\tilde{\boldsymbol{\xi}}|_{\C}$.
\end{prop}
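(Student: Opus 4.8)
The plan is to treat the two assertions in turn, the second resting on the first. Throughout I fix $\boldsymbol{\xi}\in\Pc_2(\C\times\mathcal{Q})$ and realize a given $P\in R(\boldsymbol{\xi})$ on the probability space provided by \lemref{moment_p}, so that $X=\Gamma(A,Y)$ and $\d Y_t=\int_U\tb(t,X_t,\xi_t,u)\Lambda_t(\d u)\,\d t+\int_U\ts(t,X_t,\xi_t,u)\mathcal{M}(\d u,\d t)$. Under \assref{ass2}-(B1) the dependence on the own control $u$ is carried solely by $b_0,\sigma_0^2,f_0$, the mean-field integrals being $u$-independent. I introduce the averaged triple $(\hat b_0,\widehat{\sigma_0^2},\hat f_0)(t,\omega):=\int_U(b_0,\sigma_0^2,f_0)(t,X_t,\mu_t,u)\Lambda_t(\d u)$. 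For each $(t,\omega)$ the integrand $(b_0,\sigma_0^2,f_0)(t,X_t,\mu_t,u)$ belongs to $K(t,X_t,\mu_t)$, and since $U$ is compact with continuous coefficients this set is closed and, by (B2), convex; hence its barycenter $(\hat b_0,\widehat{\sigma_0^2},\hat f_0)$ lies in $K(t,X_t,\mu_t)$ as well. By the very definition of $K$ this produces, for a.e. $(t,\omega)$, a point $u^\star\in U$ realizing $b_0(t,X_t,\mu_t,u^\star)=\hat b_0$, $\sigma_0^2(t,X_t,\mu_t,u^\star)=\widehat{\sigma_0^2}$ and $f_0(t,X_t,\mu_t,u^\star)\le\hat f_0$.

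First I would upgrade this pointwise selection to a single $\Fb$-progressively measurable process $\alpha=(\alpha_t)_{t\in[0,T]}$ by a measurable selection theorem, the requisite measurability being inherited from that of $X$, $\mu_t$ and the averaged coefficients. Because the mean-field corrections do not involve $u$, the matched identities lift to the extended coefficients, giving $\tb(t,X_t,\xi_t,\alpha_t)=\int_U\tb(t,X_t,\xi_t,u)\Lambda_t(\d u)$, $\ts^2(t,X_t,\xi_t,\alpha_t)=\int_U\ts^2(t,X_t,\xi_t,u)\Lambda_t(\d u)$ and $\tf(t,X_t,\xi_t,\alpha_t)\le\int_U\tf(t,X_t,\xi_t,u)\Lambda_t(\d u)$. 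Since $\alpha$ reproduces exactly both the drift and the quadratic-variation rate of $Y$, the conditional law of $Y$ given the boundary trajectory $A$ is pinned down by these two quantities through the well-posed martingale problem; as $X=\Gamma(A,Y)$ with $\Gamma$ the deterministic Lipschitz map of \equref{eq:Skorohodmap}, the joint law of $(X,R^A)$ with $R^A=X-Y$ is unchanged. Letting $Q\in R^{\rm s}(\boldsymbol{\xi})$ be the law of the resulting strict control rule carrying $\alpha$, the reflection cost $\int_0^Tc(t,X_t)\,\d R^A_t$ and the terminal cost $g(X_T,\mu_T)$ are then identical under $P$ and $Q$, while the running cost does not increase by the displayed inequality for $\tf$. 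This yields $\mathcal{J}(Q;\boldsymbol{\xi})\le\mathcal{J}(P;\boldsymbol{\xi})$ and proves the first assertion.

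For the equilibrium statement I would apply the first part to the (R)-MFE $(\boldsymbol{\xi}^*,P^*)$, obtaining $Q^*\in R^{\rm s}(\boldsymbol{\xi}^*)$ with $\mathcal{J}(Q^*;\boldsymbol{\xi}^*)\le\mathcal{J}(P^*;\boldsymbol{\xi}^*)=v^*(\boldsymbol{\xi}^*)$; as $Q^*\in R(\boldsymbol{\xi}^*)$ and $P^*$ is optimal, equality holds, so $Q^*$ minimizes $\mathcal{J}(\cdot;\boldsymbol{\xi}^*)$ over all of $R(\boldsymbol{\xi}^*)$, a fortiori over $R^{\rm s}(\boldsymbol{\xi}^*)$. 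Put $\boldsymbol{\rho}^*:=Q^*\circ(X,\alpha)^{-1}$ and $\tilde{\boldsymbol{\xi}}:=\mathscr{R}(\boldsymbol{\rho}^*)=Q^*\circ(X,\Lambda)^{-1}$; since $\mathscr{R}$ acts only through $\Psi$ of \equref{Psi}, which preserves the $\C$-coordinate, the identity $\boldsymbol{\rho}^*|_{\C}=\tilde{\boldsymbol{\xi}}|_{\C}$ is immediate.

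The remaining and principal point is that the equilibrium measure has moved from $\boldsymbol{\xi}^*$ to $\tilde{\boldsymbol{\xi}}$, yet $Q^*$ must be optimal for $\mathcal{J}(\cdot;\tilde{\boldsymbol{\xi}})$ among strict rules. Here the linear structure (B1) is decisive: the mean-field dependence of $b$ and $\sigma^2$ enters only through $\int b_0\,\d\mathscr{P}(\tilde\xi_t)$ and $\int\sigma_0^2\,\d\mathscr{P}(\tilde\xi_t)$, and the selection identities $b_0(t,X_t,\mu_t,\alpha_t)=\hat b_0$, $\sigma_0^2(t,X_t,\mu_t,\alpha_t)=\widehat{\sigma_0^2}$ together with $Q^*\circ X^{-1}=P^*\circ X^{-1}$ force these integrals to coincide with $\int b_0\,\d\mathscr{P}(\xi^*_t)$ and $\int\sigma_0^2\,\d\mathscr{P}(\xi^*_t)$. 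Consequently the drift and diffusion coefficients, hence the admissible sets $R^{\rm s}(\tilde{\boldsymbol{\xi}})=R^{\rm s}(\boldsymbol{\xi}^*)$, are unchanged, whereas the mean-field part of $f$ enters $\mathcal{J}(\cdot;\tilde{\boldsymbol{\xi}})$ as an additive constant independent of the control rule; thus $\mathcal{J}(\cdot;\tilde{\boldsymbol{\xi}})$ and $\mathcal{J}(\cdot;\boldsymbol{\xi}^*)$ share the same minimizers and $Q^*$ is optimal for $\tilde{\boldsymbol{\xi}}$, making $(\boldsymbol{\rho}^*,Q^*)$ an (SW)-MFE. I expect the main obstacle to lie in the construction of the strict rule $Q$ within the weak formulation: carrying out the measurable selection rigorously and, above all, verifying that matching the drift and the quadratic variation (rather than $\ts$ itself) indeed preserves the joint law of the reflected pair $(X,R^A)$, which requires exploiting the exogeneity of the boundary $A$ relative to the driving martingale measure and care with the canonical $(W,A)$-marginal constraint $\hat P$ of \defref{relaxed_control}.
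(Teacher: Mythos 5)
Your overall strategy coincides with the paper's: use the barycenter of $(b_0,\sigma_0^2,f_0)$ under $\Lambda_t(\omega,\d u)$, the convexity and closedness of $K(t,X_t,\mu_t)$ from \assref{ass2}-(B2), and a measurable selection (the paper cites Leese) to produce a progressively measurable $\alpha$ matching the drift and squared diffusion exactly and weakly decreasing $f_0$; then exploit the linearity in \assref{ass2}-(B1) to show that the mean-field integrals, hence the admissible sets and the cost functionals, are unchanged when $\boldsymbol{\xi}^*$ is replaced by $\tilde{\boldsymbol{\xi}}=\mathscr{R}(\boldsymbol{\rho}^*)$. The second half of your argument (the identities $\int b_0\,\d\mathscr{P}(\tilde\xi_t)=\int b_0\,\d\mathscr{P}(\xi_t^*)$ via the selection property and $Q^*\circ X^{-1}=P^*\circ X^{-1}$, leading to $R^{\rm s}(\tilde{\boldsymbol{\xi}})=R^{\rm s}(\boldsymbol{\xi}^*)$ and $\mathcal{J}(\cdot;\tilde{\boldsymbol{\xi}})=\mathcal{J}(\cdot;\boldsymbol{\xi}^*)$) is exactly the paper's computation.

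The genuine gap is in how you pass from the selected $\alpha$ to the strict control rule $Q$. You propose that, since $\alpha$ reproduces the drift and the quadratic-variation rate, "the conditional law of $Y$ given $A$ is pinned down ... through the well-posed martingale problem," and you flag the preservation of the law of $(X,R^A)$ as the main obstacle. That route would require uniqueness in law for the controlled martingale problem, which is not available here: \assref{ass1} imposes no nondegeneracy on $\sigma$, and the relaxed formulation is designed precisely to avoid such well-posedness requirements. The paper's resolution is more elementary and removes the obstacle entirely: it does \emph{not} construct a new state process, but sets $Q:=P\circ(Y,\delta_{\alpha_t}(\d u)\d t,W,A)^{-1}$, i.e.\ it keeps the very same paths $(Y,W,A)$ and only substitutes the control coordinate. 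Because the selection identities $b_0(t,X_t,\mu_t,\alpha_t)=\int_U b_0(t,X_t,\mu_t,u)\Lambda_t(\d u)$ and $\sigma_0^2(t,X_t,\mu_t,\alpha_t)=\int_U\sigma_0^2(t,X_t,\mu_t,u)\Lambda_t(\d u)$ hold pathwise for a.e.\ $(t,\omega)$, the process ${\tt M}^{\boldsymbol{\xi}}\phi$ evaluated along $\delta_{\alpha_t}$ coincides $\omega$-by-$\omega$ with the averaged martingale $N^{\boldsymbol{\xi}}\phi$ under $P$, so the martingale property (item (iv) of \defref{relaxed_control}) transfers to $Q$ by direct computation, $X=\Gamma(A,Y)$ and $R^A=X-Y$ are literally unchanged, and the cost comparison follows from $f_0(t,X_t,\mu_t,\alpha_t)\le\int_U f_0(t,X_t,\mu_t,u)\Lambda_t(\d u)$. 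No appeal to uniqueness, to exogeneity of $A$, or to a "well-posed" martingale problem is needed; with this substitution your argument closes.
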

As a direct consequence of \thmref{existence_RMFE} and \propref{measurable}, we can get the existence of an (SW)-MFE in \defref{strict_MFE}.

	\begin{corollary}\label{existence_SMFE}
		Let \assref{ass1} and \assref{ass2} hold. There exists an {\rm(SW)-MFE}. 
	\end{corollary}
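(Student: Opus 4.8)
The plan is to derive \corref{existence_SMFE} by composing the two preceding results, since an (SW)-MFE can be extracted directly from any (R)-MFE once the structural \assref{ass2} is in force. First I would invoke \thmref{existence_RMFE}: as \assref{ass1} is assumed, there exists an (R)-MFE $(\boldsymbol{\xi}^*,P^*)\in\Pc_2(\C\times\mathcal{Q})\times\Pc_2(\Omega)$ in the sense of \defref{relaxed_MFE}, so that $\mathcal{J}(P^*;\boldsymbol{\xi}^*)\leq\mathcal{J}(P;\boldsymbol{\xi}^*)$ for all $P\in R(\boldsymbol{\xi}^*)$ and the consistency $\boldsymbol{\xi}^*=P^*\circ(X,\Lambda)^{-1}$ holds.

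Next I would feed this equilibrium into the second assertion of \propref{measurable}. Since \assref{ass2} holds and $(\boldsymbol{\xi}^*,P^*)$ is an (R)-MFE, the proposition yields $\boldsymbol{\rho}^*\in\Pc_2(\C\times\mathcal{B})$ and $Q^*\in R^{\rm s}(\tilde{\boldsymbol{\xi}})$, where $\tilde{\boldsymbol{\xi}}=\mathscr{R}(\boldsymbol{\rho}^*)$, such that $(\boldsymbol{\rho}^*,Q^*)$ is an (SW)-MFE with $\boldsymbol{\rho}^*|_{\C}=\tilde{\boldsymbol{\xi}}|_{\C}$. This is precisely what is required: it verifies the three conditions of \defref{strict_MFE}, namely that $Q^*$ is a strict control rule relative to $\tilde{\boldsymbol{\xi}}=\mathscr{R}(\boldsymbol{\rho}^*)$, that $Q^*$ is optimal among strict control rules for $\tilde{\boldsymbol{\xi}}$, and that the consistency condition holds. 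Hence $(\boldsymbol{\rho}^*,Q^*)$ is the desired (SW)-MFE.

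No new estimate is needed at this stage, so I would expect the corollary itself to present essentially no obstacle; its entire content is carried by the two cited results. The only point worth recording is that the hypotheses are mutually compatible, both \assref{ass1} and \assref{ass2} being assumed, and that the linear-and-convex structure of \assref{ass2} is consistent with the separation structure in (A1) of \assref{ass1}. The genuine difficulty lies upstream in \propref{measurable}: passing from a relaxed optimizer to an equally good strict one requires the convexity of the set $K(t,x,\mu)$ in \assref{ass2}-(B2) together with a measurable-selection (Filippov-type) argument, and the optimality and fixed-point/consistency properties must then be transported through the transformation $\mathscr{R}$ of \equref{eq:transscrR}. Granted \propref{measurable}, the corollary follows at once.
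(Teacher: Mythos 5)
Your proposal is correct and matches the paper exactly: the corollary is stated there as a direct consequence of \thmref{existence_RMFE} (which supplies an (R)-MFE under \assref{ass1}) and the second assertion of \propref{measurable} (which converts it into an (SW)-MFE under \assref{ass2}). No further argument is given or needed in the paper.
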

	Furthermore, under the same assumptions, we can also derive the existence of a Markovian (SW)-MFE in \defref{markovian}  in the next result, whose proof is delegated to subsection \ref{appendix}.
	\begin{corollary}\label{existence_Markovian}
		Let \assref{ass1} and \assref{ass2} hold. There exists a Markovian {\rm (SW)-MFE}.
	\end{corollary}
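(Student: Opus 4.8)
The plan is to start from the equilibrium produced by \corref{existence_SMFE} and upgrade its optimal control to feedback form by a Markovian projection (mimicking) argument, in the spirit of Lacker \cite{Lacker}. By \thmref{existence_RMFE} and \propref{measurable}, under \assref{ass1} and \assref{ass2} there exists an (R)-MFE $(\boldsymbol{\xi}^*,P^*)$ with $P^*\in R_{\mathrm{opt}}(\boldsymbol{\xi}^*)$. The key reduction I would exploit is the linear structure in \assref{ass2}-(B1): once the flow $\boldsymbol{\xi}^*$ is frozen, the coefficients seen by the representative agent depend on $\boldsymbol{\xi}^*$ only through the state marginals $\mu_t^*:=\xi_t^*|_{\R}$ and the three scalar population averages $C_b^*(t):=\int_{\R\times U}b_0(t,x',\mu_t^*,u')\,\rho_t^*(\d x',\d u')$, $C_\sigma^*(t)$, $C_f^*(t)$ (together with $\mu_T^*$). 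Hence the fixed-point condition reduces to preserving these reduced quantities, and it suffices to exhibit an optimal control in feedback form $\alpha_t=\psi(t,X_t)$ whose induced flow reproduces $(\mu_t^*,C_b^*,C_\sigma^*,C_f^*)$.

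First I would condition the optimal dynamics on the current state. Under $P^*$, set $\bar b_0(t,x):=\E^{P^*}[\int_U b_0(t,X_t,\mu_t^*,u)\,\Lambda_t(\d u)\mid X_t=x]$ and define $\bar\sigma_0^2(t,x),\bar f_0(t,x)$ analogously. Each triple $(\bar b_0(t,x),\bar\sigma_0^2(t,x),\bar f_0(t,x))$ is a conditional average of points of the set $K(t,x,\mu_t^*)$ appearing in \assref{ass2}-(B2), hence lies in $K(t,x,\mu_t^*)$ by convexity; a measurable selection exactly as in the proof of \propref{measurable} then yields a Borel map $\psi:[0,T]\times\R\to U$ with $b_0(t,x,\mu_t^*,\psi(t,x))=\bar b_0(t,x)$, $\sigma_0^2(t,x,\mu_t^*,\psi(t,x))=\bar\sigma_0^2(t,x)$ and $f_0(t,x,\mu_t^*,\psi(t,x))\le\bar f_0(t,x)$. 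Adding back the frozen scalars $C_b^*,C_\sigma^*,C_f^*$, the feedback drift and diffusion equal the conditional expectations, given $X_t$, of the original controlled coefficients, while the running cost is pointwise no larger.

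Next I would realize this feedback as a genuine reflected diffusion with the correct marginal flow. Passing through the Skorokhod map $X=\Gamma(A,Y)$ of \equref{eq:Skorohodmap} and projecting only the controlled part of the pre-reflection dynamics $Y$, a mimicking theorem should produce a weak solution $\hat X$ of the reflected SDE driven by $\psi(t,\hat X_t)$ with $\Law(\hat X_t)=\Law(X_t)=\mu_t^*$ for every $t$. Preservation of marginals then forces preservation of the scalar averages, since $\E[b_0(t,\hat X_t,\mu_t^*,\psi(t,\hat X_t))]=\E[\bar b_0(t,X_t)]=C_b^*(t)$, and likewise for $\sigma$ and $f$. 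Consequently $\hat{\boldsymbol{\rho}}:=\Law(\hat X,(\psi(t,\hat X_t))_{t\in[0,T]})\in\Pc_2(\C\times\mathcal{B})$ generates exactly the frozen reduced quantities, so with $\hat{\boldsymbol{\xi}}=\mathscr{R}(\hat{\boldsymbol{\rho}})$ the agent's control problem against $\hat{\boldsymbol{\xi}}$ coincides with the one against $\boldsymbol{\xi}^*$; the feedback control achieves cost no larger than $v^*(\boldsymbol{\xi}^*)$, hence it is optimal and, by construction, self-consistent. This delivers a Markovian (SW)-MFE in the sense of \defref{markovian}.

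The hard part will be the mimicking step in the presence of reflection and an exogenous stochastic boundary $A$ (possibly correlated with $W$): classical Markovian-projection results apply to unconstrained It\^o processes, so one must adapt them to the reflected martingale problem with the merely Borel, conditional-expectation coefficients $\bar b_0,\bar\sigma_0^2$, obtaining a weak solution whose time-marginals match those of $X$. A second delicate point is the reflection cost $\int_0^T c(t,\hat X_t)\,\d R_t^{A}$, which by \lemref{Gamma} is a path functional of $(Y,A)$ rather than of the marginals alone; I would need the Skorokhod-mapped mimicking to preserve (the relevant law of) this local-time functional so that the total cost of the feedback control still does not exceed $v^*(\boldsymbol{\xi}^*)$. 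Establishing this reflected mimicking result, together with the measurability and $L^p$-integrability of $\psi$ needed to place $\hat{\boldsymbol{\rho}}$ in $\Pc_2(\C\times\mathcal{B})$, is where the technical heart of the argument lies.
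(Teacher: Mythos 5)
Your proposal is essentially the paper's argument --- Markovian projection by conditioning on the current reflected state, the convexity/measurable-selection step from \assref{ass2}-(B2), and a Brunick--Shreve-type mimicking theorem --- but with the last two steps in the opposite order. The paper first forms the relaxed feedback $q(t,x)=\E^{P'}[\Lambda_t^*\mid X_t^*=x]$ (after replacing the martingale-measure noise by a Brownian one via a measurable square root $\bar\sigma$ with $\bar\sigma^2(t,x,\rho,q)=\int_U\sigma^2(t,x,\rho,u)q(\d u)$ from El Karoui--M\'el\'eard), applies the mimicking theorem (Corollary 3.7 of Brunick--Shreve) to obtain a process with the same time-marginals driven by $q(t,X_t)$, and only then strictifies via the measurable selection of \propref{measurable}; you strictify the conditioned coefficients first and mimic afterwards, which is equally viable under \assref{ass2} and lets you bypass the square-root construction. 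Your reduction of the frozen mean-field dependence to the marginals $\mu_t^*$ plus three scalar averages is exactly how the paper uses the linear structure of \assref{ass2}-(B1), both to show the conditioned triple stays in $K(t,x,\mu_t^*)$ and to verify that the new flow reproduces the same control problem. The two difficulties you flag but leave open are precisely where the paper leans on its cited machinery: Brunick--Shreve's result mimics \emph{updating} path functionals, so the Skorokhod-mapped pair $(Y_t,R_t^A)$ with $R_t^A=\sup_{s\le t}\{-(Y_s-A_s)\vee 0\}$ (hence $X_t$ and the reflection-cost integral) can be carried along as extra mimicked components, and the measurability/integrability of the selection is already established in the proof of \propref{measurable}. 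So nothing in your outline would fail, but a complete write-up must make the reflected mimicking step explicit rather than deferring it.
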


\subsection{Limit theory}\label{sec:limit}
	
The goal of this subsection is to establish some connections between the MFG problem and the $N$-player game problem with joint state-control interactions and state reflections. 
	
First, the following result establishes the convergence of $\boldsymbol{\epsilon}^N$-Nash equilibria as $N$ tends to infinity, closely resembling the propagation of chaos result in the mean-field control theory (Theorem 2.11 in Lacker \cite{Lacker1}).
	
\begin{theorem}\label{propagation}
Let \assref{ass1} and \assref{ass3} hold. Suppose that $P^N\in\Pc_2(\Omega^N)$ is an $\boldsymbol{\epsilon}^N$-Nash equilibrium in \defref{NE_weak} for $N\geq1$ with $\lim_{N\to\infty}\frac1N\sum_{i=1}^N\epsilon^{i,N}=0$. Then, $(P^N\circ (\boldsymbol{\xi}^N,\boldsymbol{\delta}^N)^{-1})_{N\geq 1}$ is precompact in $\Pc_2(\Pc_2(\C\times\mathcal{Q})\times\Pc_2(\Omega))$, and each of its limit point $\Xi\in\Pc_2(\Pc_2(\C\times\mathcal{Q})\times\Pc_2(\Omega))$ is supported on $\{(\boldsymbol{\xi},P)\in\Pc_2(\C\times\mathcal{Q})\times\Pc_2(\Omega);~(\boldsymbol{\xi},P) \text{ is an {\rm (R)-MFE}}\}$ with $\boldsymbol{\delta}^N:=\frac1N\sum_{i=1}^N\delta_{(X^i,\Lambda^i, W^i,A^i)}$.
\end{theorem}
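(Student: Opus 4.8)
The plan is to run the standard two-stage propagation-of-chaos program. First I would establish precompactness of the laws of the empirical measures, and then show that every subsequential limit $\Xi$ is concentrated on (R)-MFEs by verifying, for $\Xi$-almost every pair $(\boldsymbol{\xi},P)$, the consistency condition, the admissibility $P\in R(\boldsymbol{\xi})$, and the optimality $\mathcal{J}(P;\boldsymbol{\xi})=v^*(\boldsymbol{\xi})$, in that order. For precompactness, \lemref{moment_p_N} gives a uniform bound $\sup_N\max_i\E^{P^N}[\sup_{t\in[0,T]}|Y^i_t|^p]\le C$ with $p>2$; since $X^i=\Gamma(A^i,Y^i)$ with $\Gamma$ Lipschitz (see \equref{Lipschitz_Gamma}) and $\E[\sup_t|A^i_t|^p]<\infty$, the bound transfers to $\sup_t|X^i_t|^p$. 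Together with the compactness of $\mathcal{Q}$, the fixed law $\hat P$ of each $(W^i,A^i)$, and a Kolmogorov/Aldous estimate for the trajectories $X^i$ from the dynamics \equref{SDE_N} and the growth bound in \lemref{extension_ass}, this yields tightness of the intensity measures $\frac1N\sum_i\Law^{P^N}(X^i,\Lambda^i,W^i,A^i)$. Tightness of the intensity is equivalent to tightness of the laws of the empirical measures $\boldsymbol{\delta}^N$, and the exponent $p>2$ upgrades weak precompactness to $\mathcal{W}_2$-precompactness by uniform integrability. Since $\boldsymbol{\xi}^N$ is the image of $\boldsymbol{\delta}^N$ under the continuous projection onto the $(X,\Lambda)$-coordinates, precompactness of $(P^N\circ(\boldsymbol{\xi}^N,\boldsymbol{\delta}^N)^{-1})_{N\ge1}$ in $\Pc_2(\Pc_2(\C\times\mathcal{Q})\times\Pc_2(\Omega))$ follows.

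Next, I would fix a convergent subsequence with limit $\Xi$ and invoke Skorokhod's representation. Consistency $\boldsymbol{\xi}=P\circ(X,\Lambda)^{-1}$ is automatic, since $\boldsymbol{\xi}^N$ is by construction the $(X,\Lambda)$-marginal of $\boldsymbol{\delta}^N$. For admissibility I would pass to the limit in the $N$-player martingale problem of \defref{relaxed_strategy}(iv): testing against a test function $\phi\in C_b^2$ of a single coordinate and using the continuity of $(\tb,\ts)$ in $(x,\xi,u)$ from \lemref{extension_ass}(B1), the convergence $\xi^N_s\to\xi_s$, and the measure-Lipschitz bound of \lemref{extension_ass_N} (furnished by \assref{ass3}) to control the replacement of $\xi^N$ by $\xi$, one recovers the representative-player martingale ${\tt M}^{\boldsymbol{\xi}}\phi$ of \defref{relaxed_control}, so that $P\in R(\boldsymbol{\xi})$. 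Conditions (i)--(iii) of \defref{relaxed_control} survive the limit because each $(W^i,A^i)$ carries the fixed law $\hat P$ and $Y_0^i\ge A_0^i$.

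The optimality is the crux, and I expect it to be the main obstacle. Since $P\in R(\boldsymbol{\xi})$ already gives $\mathcal{J}(P;\boldsymbol{\xi})\ge v^*(\boldsymbol{\xi})$, only the reverse inequality must be shown. Averaging the $\epsilon$-Nash inequality \equref{N_NE} over $i$ and using $\frac1N\sum_i\epsilon^{i,N}\to0$ gives $\frac1N\sum_i\mathcal{J}_i(P^N)\le\frac1N\sum_i\inf_{Q^N\in R^{\rm s}_{i,N}(P^N)}\mathcal{J}_i(Q^N)+o(1)$. For the left-hand side I would prove the lower semicontinuity $\liminf_N\frac1N\sum_i\mathcal{J}_i(P^N)\ge\E^{\Xi}[\mathcal{J}(P;\boldsymbol{\xi})]$, in which the running and terminal parts of $\Delta$ in \equref{costDel} are controlled by (A4) and the reflection part $\int_0^Tc(t,X_t)\d R^A_t$ is handled via the Lipschitz continuity of $\Gamma$, the identity $R^A=X-Y$, and (A6). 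For the right-hand side, given an arbitrary competitor $Q\in R(\boldsymbol{\xi})$, I would construct single-player deviations $Q^N\in R^{\rm s}_{i,N}(P^N)$ --- passing from the relaxed $Q$ to strict controls by a chattering approximation and using the deviation dynamics $Z^i$ of \defref{RsiN} --- whose costs converge to $\mathcal{J}(Q;\boldsymbol{\xi})$. The decisive point is that one deviation changes $\rho^N_t$ into $\theta^N_t$ only through a single $O(1/N)$ atom, so the measure-Lipschitz stability of \lemref{extension_ass_N}/\assref{ass3} forces the states of all players and the cost to converge to their mean-field limit driven by $\boldsymbol{\xi}$. Chaining the two bounds would give $\E^{\Xi}[\mathcal{J}(P;\boldsymbol{\xi})]\le\E^{\Xi}[\mathcal{J}(Q;\boldsymbol{\xi})]$ for every competitor, hence $\mathcal{J}(P;\boldsymbol{\xi})\le v^*(\boldsymbol{\xi})$ holds $\Xi$-a.s.

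The genuine difficulty inside the optimality step is to pass to the limit in the infimum over single-player deviations, which is a priori neither upper nor lower semicontinuous, while simultaneously coping with the joint-law dependence through $\boldsymbol{\xi}$ and the discontinuity of the control trajectories. To make the deviation step rigorous I would introduce the extended relaxed control of \defref{extended_relaxed_control}, which promotes the mean-field measure $\boldsymbol{\xi}\in\Pc_2(\C\times\mathcal{Q})$ to a coordinate of an enlarged canonical space, so that the (possibly random) limiting mean field can be treated as a fixed environment on each realization and the competitor $Q$ encoded consistently with it; the value identity of \lemref{relation_value} then transfers the deviation bound back to the classical cost $\mathcal{J}(Q;\boldsymbol{\xi})$. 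The joint-law discontinuity that pervades the whole argument is precisely what the measure-Lipschitz \assref{ass3} assumed here (equivalently the separation conditions (A1) and (A5)) is designed to neutralize, guaranteeing the continuity of the state and the cost in the mean-field term that all of the above limit passages require.
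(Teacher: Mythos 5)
Your proposal is correct and follows essentially the same route as the paper: precompactness of the empirical-measure laws via the $L^p$ moment bounds and the mean-measure criterion, identification of the support through the limit of the $N$-player martingale problem together with the law of large numbers for $(W^i,A^i)$ and the i.i.d. initial data, and optimality via the averaged $\boldsymbol{\epsilon}$-Nash inequality combined with the extended relaxed control of \defref{extended_relaxed_control}, the measurable-selection inequality of \lemref{relation_value}, and the $O(1/N)$ stability of single-player deviations under \assref{ass3}. You also correctly flag the two genuine crux points — the randomness of the limiting environment $\boldsymbol{\xi}$ in the optimality step and the joint-law discontinuity — and resolve them with exactly the devices the paper uses.
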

	
The proof of the above theorem is technical, which is given in subsection \ref{proof:thm2}. We can outline the arguments by the following three main steps: \\
\indent\textbf{Step-(i)} Show the precompactness of {$(\Xi^N)_{N\geq 1}$} with $\Xi^N:=P^N\circ (\boldsymbol{\xi}^N,\boldsymbol{\delta}^N)^{-1}$ (\lemref{precompactness} in subsection \ref{proof:thm2});\\
\indent
\textbf{Step-(ii)} Identify the limit points of $P^N\circ (\boldsymbol{\xi}^N,\boldsymbol{\delta}^N)^{-1}$ as $N\to\infty$, and prove that every limit point is supported on the set $\{(\boldsymbol{\xi},P)\in\Pc_2(\C\times\mathcal{Q})\times\Pc_2(\Omega);~P\in R(\boldsymbol{\xi})~\text{and}~P\circ(X,\Lambda)^{-1}=\boldsymbol{\xi}\}$ (\lemref{pre_MFE_support} in subsection \ref{proof:thm2});\\
	\indent
	\textbf{Step-(iii)} Complete the proof by verifying the optimality (\lemref{relation}, \lemref{relation_value} and the proof of \thmref{propagation} in subsection \ref{proof:thm2}).
	
\begin{remark}\label{uniqueness}
In the above convergence result, if the uniqueness of {\rm (R)-MFE} also holds, every limit point   $\Xi$ of $(P^N\circ (\boldsymbol{\xi}^N,\boldsymbol{\delta}^N)^{-1})_{N\geq 1}$ concentrates its entire mass on the unique {\rm (R)-MFE} $(\boldsymbol{\xi}^*,P^*)$. More precisely, $\Xi=\delta_{(\boldsymbol{\xi}^*,P^*)}$ for every limit point $\Xi$, and hence the sequence $(P^N\circ (\boldsymbol{\xi}^N,\boldsymbol{\delta}^N)^{-1})_{N\geq 1}$ converges to $\delta_{(\boldsymbol{\xi}^*,P^*)}$ as $N\to\infty$, or equivalently $\lim\limits_{N\to\infty}\E^{P^N}[\mathcal{W}_{2,\Omega}(\boldsymbol{\delta}^N,P^*)]=0$.
	\end{remark}
The converse limit result can also be established. Specifically, we can construct a sequence of approximate $\boldsymbol{\epsilon}^N$-Nash equilibria that converges to a given Markovian {\rm (SW)-MFE} in the mean field model, as stated in the next main result of this section. The detailed proof is given in subsection \ref{proof:thm3}.
\begin{theorem}\label{convergence_theorem}
Let \assref{ass1} and \assref{ass3} hold. If $(\boldsymbol{\rho}^*,P^*)$ is a Markovian {\rm (SW)-MFE} for the MFG problem, then there exist $P^N\in R_N^{\rm s}$ and $\boldsymbol{\epsilon}^N=(\epsilon^{1,N},\ldots,\epsilon^{N,N})\in\R_+^N$ for $N\geq1$ satisfying $\lim_{N\to\infty}\vee_{i=1}^N\epsilon^{i,N}=0$ such that $P^N$ is an $\boldsymbol{\epsilon}^N$-Nash equilibrium as in \defref{NE_weak}, and the following convergence holds:
		\begin{align}\label{convergence}
			\lim_{N\to\infty}\E^{P^N}\left[\mathcal{W}_{2,\Omega}\left(\frac1N\sum_{i=1}^N\delta_{(Y^i,\Lambda^i,W^i,A^i)},P^*\right)\right]=0.
		\end{align}
		As a consequence, we have
		\begin{align}\label{convergence_weak}
			\lim_{N\to\infty}\mathcal{W}_{2,\Pc_2(\Omega)}\left(P^N\circ\left(\frac1N\sum_{i=1}^N\delta_{(Y^i,\Lambda^i,W^i,A^i)}\right)^{-1},\delta_{P^*}\right)=0.
		\end{align}
	\end{theorem}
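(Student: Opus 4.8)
The plan is to use the classical decoupling construction: feed the Markovian feedback of the given MFE into $N$ independent copies of the mean field system and then transplant the resulting frozen, open-loop controls into the genuine $N$-player system driven by the empirical measure. Since $(\boldsymbol{\rho}^*,P^*)$ is a Markovian (SW)-MFE, \defref{markovian} provides a Borel map $\psi:[0,T]\times\R\to U$ with $P^*(\Lambda_t(\d u)\d t=\delta_{\psi(t,X_t)}(\d u)\d t)=1$; write $\boldsymbol{\xi}^*=\mathscr{R}(\boldsymbol{\rho}^*)$, let $(\rho_t^*)$ be its induced flow, and abbreviate $J^*:=\mathcal{J}(P^*;\boldsymbol{\xi}^*)$. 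On a space carrying i.i.d. data $(\eta^i,W^i,A^i)_{i=1}^N$ of common law $\hat P$, I first solve the decoupled mean field system $\bar X^i=\Gamma(A^i,\bar Y^i)$ with $\d\bar Y_t^i=b(t,\bar X_t^i,\rho_t^*,\psi(t,\bar X_t^i))\d t+\sigma(t,\bar X_t^i,\rho_t^*,\psi(t,\bar X_t^i))\d W_t^i$, producing i.i.d. copies of the MFE of common law $P^*$, and I freeze the open-loop controls $\bar\alpha_t^i:=\psi(t,\bar X_t^i)$. I then let $(Y^i)$ solve the true $N$-player dynamics \equref{SDE_N} with these very controls $\bar\alpha^i$ and empirical flow $\rho_t^N=\frac1N\sum_j\delta_{(X_t^j,\bar\alpha_t^j)}$, $X^i=\Gamma(A^i,Y^i)$, and set $P^N:=\Law((Y^i,\delta_{\bar\alpha_t^i}(\d u)\d t,W^i,A^i)_{i=1}^N)$. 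The decisive advantage of decoupling is that the merely measurable map $\psi$ is never composed with two distinct state trajectories: the controls entering the decoupled and the $N$-player systems are the \emph{same} fixed processes $\bar\alpha^i$, so the $N$-player dynamics have coefficients Lipschitz in $(x,\rho)$ by \assref{ass3} and a Lipschitz reflection via \equref{Lipschitz_Gamma}, hence are well posed and yield $P^N\in R_N^{\rm s}$.

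For the convergence \equref{convergence} I couple $(X^i)$ with the i.i.d. copies $(\bar X^i)$ sharing the same noise. As the controls coincide, $Y^i-\bar Y^i$ is driven only by the gap between $b(s,X_s^i,\rho_s^N,\bar\alpha_s^i)$ and $b(s,\bar X_s^i,\rho_s^*,\bar\alpha_s^i)$ (and its $\sigma$-analogue), which is Lipschitz in $(x,\rho)$ by \assref{ass3}; \equref{Lipschitz_Gamma} passes this to $X^i-\bar X^i$, and a Gronwall estimate after averaging over $i$ gives $\frac1N\sum_i\E[\sup_t|X_t^i-\bar X_t^i|^2]\lesssim\E\int_0^T\mathcal{W}_{2,\R\times U}(\bar\rho_s^N,\rho_s^*)^2\,\d s$ with $\bar\rho_s^N=\frac1N\sum_j\delta_{(\bar X_s^j,\bar\alpha_s^j)}$. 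The right-hand side vanishes: the path-empirical measure $\frac1N\sum_j\delta_{(\bar X^j,\bar\alpha^j)}$ converges to $\boldsymbol{\rho}^*$ in $\Pc_2(\C\times\mathcal{B})$ by the law of large numbers for i.i.d. samples (the uniform $p$-moment of \lemref{moment_p} supplying uniform integrability), and \lemref{CB_property} turns this into decay of the integrated marginal distance. Since the control marginals $\Lambda^i=\delta_{\bar\alpha^i}$ are identical in both systems, combining the coupling with the law of large numbers shows $\frac1N\sum_i\delta_{(Y^i,\Lambda^i,W^i,A^i)}\to P^*$ in $\mathcal{W}_{2,\Omega}$-mean, which is \equref{convergence}; then \equref{convergence_weak} follows since mean convergence of a random measure to a deterministic limit is equivalent to convergence of its law to $\delta_{P^*}$.

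It remains to establish the $\boldsymbol{\epsilon}^N$-Nash property with $\max_i\epsilon^{i,N}\to0$. By the symmetry of the construction all $\mathcal{J}_i(P^N)$ agree, and the cost continuity from \assref{ass1} together with the previous paragraph gives $\mathcal{J}_i(P^N)\to J^*$. For the matching lower bound fix any deviation $Q^N\in R_{i,N}^{\rm s}(P^N)$ in which player $i$ switches to a control $\beta$ while the others retain $\bar\alpha^j$; write $(Z^j)$ for the resulting states and $\theta_t^N=\frac1N(\sum_{j\neq i}\delta_{(Z_t^j,\bar\alpha_t^j)}+\delta_{(Z_t^i,\beta_t)})$. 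Because only player $i$'s atom is altered, $\theta^N$ deviates from $\bar\rho^N$ by a single atom --- an $O(1/N)$ term bounded uniformly in $\beta$ via the compactness of $U$ and the uniform moment bound of \lemref{moment_p_N} --- plus the Lipschitz reaction of the $N-1$ unchanged players, so the same Gronwall argument gives $\E\int_0^T\mathcal{W}_{2,\R\times U}(\theta_s^N,\rho_s^*)^2\,\d s\to0$ \emph{uniformly in $\beta$}. Coupling $Z^i$ (control $\beta$, flow $\theta^N$) with the mean field state $\widehat X$ carrying the \emph{same} control $\beta$ and the frozen flow $\rho^*$, and invoking the uniform-in-$(x,u)$ continuity of $f,c,g$ in the measure argument, yields $|\mathcal{J}_i(Q^N)-J(\beta,\boldsymbol{\rho}^*)|\le\delta_N$ with $\sup_\beta\delta_N\to0$. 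Since $(\boldsymbol{\rho}^*,P^*)$ is an (SW)-MFE, its optimality among strict control rules (\defref{strict_MFE}(ii)) gives $J(\beta,\boldsymbol{\rho}^*)\ge J^*$ for every strict $\beta$; hence $\inf_{Q^N}\mathcal{J}_i(Q^N)\ge J^*-\delta_N$, and $\epsilon^{i,N}:=\mathcal{J}_i(P^N)-\inf_{Q^N}\mathcal{J}_i(Q^N)$ satisfies $\max_i\epsilon^{i,N}\to0$.

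The crux is the third paragraph and, within it, the demand that every estimate be \emph{uniform over the deviating controls $\beta$}. Two issues stand out. First, the propagation-of-chaos bound for the post-deviation flow $\theta^N$ must not deteriorate as $\beta$ ranges over all admissible strategies, which is why the single-player perturbation is isolated as an $O(1/N)$ atom governed purely by the compactness of $U$ and the a priori moment bounds rather than by any feature of $\beta$. Second, passing from the empirical-measure cost $\mathcal{J}_i(Q^N)$ to the frozen mean field cost $J(\beta,\boldsymbol{\rho}^*)$ needs a modulus of continuity of $f,g$ in the measure variable that is uniform in the spatial and control arguments; for a general joint law of state and control this fails, and it is supplied precisely by the separation structure of \assref{ass1}(A1),(A5) (equivalently by the Lipschitz-in-measure \assref{ass3}), the same mechanism behind \lemref{closed} and \lemref{CB_property}. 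A further technical layer is the reflection cost $\int_0^T c(t,\cdot)\,\d R_t$, whose convergence requires controlling $R^i=X^i-Y^i$ through the Lipschitz continuity \equref{Lipschitz_Gamma} of the Skorokhod map.
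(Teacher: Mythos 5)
Your proposal is correct and follows essentially the same route as the paper's proof: the decoupling construction that feeds the Markovian feedback $\psi$ into i.i.d. copies of the mean field system, freezes the resulting open-loop controls, transplants them into the $N$-player dynamics, and then establishes both \equref{convergence} and the Nash property via coupling, Gronwall, the law of large numbers in $\Pc_2(\C\times\mathcal{B})$, and the single-atom perturbation bound for deviations, concluding with the optimality of $P^*$ among strict control rules. The only cosmetic difference is that you phrase the deviation estimates as uniform over all $\beta$, whereas the paper selects a near-optimizing sequence $Q^N$ and observes that the constants are independent of $(i,N)$ and of the chosen deviation; these are the same argument.
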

    
\begin{remark}
Note that the constructed sequence of $\boldsymbol{\epsilon}^N$-Nash equilibrium in the above theorem with $\lim_{N\to\infty}\vee_{i=1}^N\epsilon^{i,N}=0$ is a stronger condition than the one in \thmref{propagation}, in which, we only require that $\lim_{N\to\infty}\frac1N\sum_{i=1}^N\epsilon^{i,N}=0$. Moreover, the convergence we established in \equref{convergence} is also generally stronger than the convergence result in \thmref{propagation} and the two convergence claims become equivalent when the uniqueness of {\rm (R)-MFE} is validated (see \remref{uniqueness}).
\end{remark}

	\section{Proofs}\label{sec:proofs}
	\subsection{Proofs of Main Theorems}\label{sec:proof-main-result}
	This subsection presents the technical preparations and lengthy proofs of the main theorems, namely \thmref{existence_RMFE},  \thmref{propagation} and \thmref{convergence_theorem}, in Section \ref{sec:existence}. 
	
\subsubsection{Proof of \thmref{existence_RMFE}}\label{proof:thm1}
Firstly, we prove the claimed result in \textbf{Step-(i)}: 
    
\begin{lemma}\label{compactness} For any probability measure $\boldsymbol{\xi}\in\Pc_2(\C\times\mathcal{Q})$ with $(\xi_t)_{t\in [0,T]}$ being the corresponding $\Pc_2(\R\times\Pc(U))$-valued measure flow, it holds that {\rm(i)} $R(\boldsymbol{\xi})$ is compact and convex in $\Pc_2(\Omega)$; {\rm(ii)} $R_{\rm opt}(\boldsymbol{\xi})$ is a (nonempty) compact and convex set in $\Pc_2(\Omega)$.
\end{lemma}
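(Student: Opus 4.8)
The plan is to establish all properties of $R(\boldsymbol{\xi})$ first and then obtain those of $R_{\rm opt}(\boldsymbol{\xi})$ as a distinguished subset. Convexity of $R(\boldsymbol{\xi})$ is immediate once one notices that every defining condition in \defref{relaxed_control} is affine in $P$: the marginal conditions (i)--(iii) are linear constraints, and the martingale condition (iv) is equivalent to the family of linear equations $\E^P[({\tt M}^{\boldsymbol{\xi}}\phi(t)-{\tt M}^{\boldsymbol{\xi}}\phi(s))\,\zeta]=0$ over bounded $\F_s$-measurable $\zeta$, $s\le t$, and $\phi\in C_b^2(\R\times\R)$. A convex combination of two measures satisfying these constraints satisfies them as well, so $R(\boldsymbol{\xi})$ is convex.

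For relative compactness in $\Pc_2(\Omega)$ I would verify tightness together with uniform integrability of second moments, since the latter upgrades weak limits to $\mathcal{W}_{2,\Omega}$-limits. Tightness is where the construction pays off: by condition (iii) the $(W,A)$-marginal of every $P\in R(\boldsymbol{\xi})$ equals the single fixed law $\hat P$, hence is tight; the $\Lambda$-component always lives in the compact Polish space $\mathcal{Q}$ thanks to compactness of $U$; and for the $Y$-component I would use the martingale-measure representation and the uniform bound $\E^P[\sup_t|Y_t|^p]\le C$ of \lemref{moment_p}, combined with the linear-growth estimate (B3) of \lemref{extension_ass} and the Burkholder--Davis--Gundy inequality, to obtain increment bounds $\E^P|Y_t-Y_s|^p\le C|t-s|^{p/2}$; since $p>2$, Kolmogorov's criterion then yields tightness of the $Y$-marginals in $\C$. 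The same $p$-moment bound with $p>2$ furnishes uniform integrability of second moments, so $R(\boldsymbol{\xi})$ is relatively compact in $\Pc_2(\Omega)$.

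The delicate step is closedness. Given $P^n\to P$ with $P^n\in R(\boldsymbol{\xi})$, conditions (i)--(iii) pass to the limit because the evaluation maps $Y_0$ and $(W,A)$ are continuous. For the martingale condition (iv) I would argue in the now-standard fashion: $X=\Gamma(A,Y)$ is a continuous functional of $(A,Y)$ by the Lipschitz estimate \equref{Lipschitz_Gamma}, the coefficients $\tb,\ts$ are continuous in $(x,u)$ by (B1)--(B2), and convergence in $\mathcal{Q}$ controls the relaxed-control integrals $\int_U(\cdot)\,\Lambda_s(\d u)$; the $p$-integrability with $p>2$ then controls the at most quadratically growing generator term and supplies the uniform integrability needed to pass the martingale identities through the weak limit, giving $P\in R(\boldsymbol{\xi})$. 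I expect this martingale-preservation argument to be the principal obstacle, as it must pass a linear identity through a weak limit while simultaneously absorbing the relaxed control $\Lambda$ and the reflection hidden in $X=\Gamma(A,Y)$. Combined with tightness, this makes $R(\boldsymbol{\xi})$ compact.

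Finally, for $R_{\rm opt}(\boldsymbol{\xi})$ I would exploit that $\mathcal{J}(P;\boldsymbol{\xi})=\E^P[\Delta(\boldsymbol{\xi})]$ is \emph{linear} in $P$, because $\Delta(\boldsymbol{\xi})$ in \equref{costDel} is a fixed measurable functional of the canonical path not depending on $P$. I would then show $P\mapsto\mathcal{J}(P;\boldsymbol{\xi})$ is continuous on $R(\boldsymbol{\xi})$: the running and terminal costs are continuous along paths under (A4) and (B4), while the reflection cost $\int_0^T c(t,X_t)\,\d R_t^A$ is continuous because $R^A=\Gamma(A,Y)-Y$ is non-decreasing and varies continuously with $(A,Y)$, so that $\d R^{A,n}$ converges weakly to $\d R^A$, and $c$ is continuous with linear growth (A6); the uniform $p$-integrability ($p>2$) again upgrades convergence of laws to convergence of these expectations. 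Continuity on the nonempty compact set $R(\boldsymbol{\xi})$ gives, by the Weierstrass theorem, that $v^*(\boldsymbol{\xi})$ is attained, so $R_{\rm opt}(\boldsymbol{\xi})\ne\emptyset$; as the level set $\{P\in R(\boldsymbol{\xi}):\mathcal{J}(P;\boldsymbol{\xi})=v^*(\boldsymbol{\xi})\}$ of a continuous functional inside a compact set it is closed, hence compact; and being the set on which a linear functional attains its minimum over the convex set $R(\boldsymbol{\xi})$, it is convex.
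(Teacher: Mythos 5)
Your proposal is correct and follows essentially the same route as the paper: tightness via the fixed law of $(W,A)$, compactness of $\mathcal{Q}$, and a Kolmogorov-criterion estimate for $Y$ from the $p$-moment bound ($p>2$), upgraded to $\mathcal{W}_{2,\Omega}$-precompactness; closedness by passing the (at most quadratically growing) martingale identity through the weak limit; convexity from the affine structure of the defining conditions; and the properties of $R_{\rm opt}(\boldsymbol{\xi})$ from continuity and linearity of $P\mapsto\E^P[\Delta(\boldsymbol{\xi})]$ on the compact convex set $R(\boldsymbol{\xi})$ (the paper invokes a result of Zalinescu for the continuity of $\Delta$, including the reflection-cost term, where you argue by hand). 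No gaps worth flagging.
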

	
\begin{proof}
(i) We first show that $\{P\circ Y^{-1};~P\in R(\boldsymbol{\xi})\}$ is tight. Note that $\sup_{P\in R(\boldsymbol{\xi})}\E^ P\left[|Y_0|^2\right]=\int_{\R}|y|^2\eta(\d y)<\infty$. By \lemref{moment_p} and H\"{o}lder's inequality, we have $\sup_{P\in R(\boldsymbol{\xi})}\E^P\left[\left|Y_t-Y_s\right|^p\right]\leq C|t-s|^{1+\frac{p}{2}}$, 
where $C>0$ is a constant depending on $(M,\Law^P(\eta),T)$. In view of Kolmogorov's criterion, the set $\{P\circ Y^{-1};~P\in R(\boldsymbol{\xi})\}$ is tight. On the other hand, $\mathcal{Q}$ is compact, and hence $\{P\circ \Lambda^{-1};~P\in R(\boldsymbol{\xi})\}$ is tight. Moreover, $P\circ W^{-1}=\mathcal{W}$ for all $P\in R(\boldsymbol{\xi})$, where $\mathcal{W}$ denotes the Wiener measure on $\C^W$. Consequently, $R(\boldsymbol{\xi})$ is also tight. The $L^p$-boundedness in \lemref{moment_p}, together with the compactness of the policy space $U$, allows us to upgrade the tightness to precompactness in $\Pc_2(\Omega)$ (c.f. Proposition B.3 in Lacker \cite{Lacker}).
		
We next prove that $R(\boldsymbol{\xi})$ is closed. Let $(P_n)_{n\geq1}\subset R(\boldsymbol{\xi})$ such that $P_n\to P$ in $\Pc_2(\Omega)$ as $n\to\infty$. One can easily verify that $P$ satisfies (i)-(iii) of \defref{relaxed_control}, thus it suffices to show that (iv) of \defref{relaxed_control} holds. For any $\phi\in C_b^2(\R)$, $0\leq s\leq t\leq T$ and $\F_s$-measurable bounded $\RV$ $h$, we have
		\begin{align*}
			\E^P\left[\left({\tt M}^{\boldsymbol{\xi}}\phi(t)-{\tt M}^{\boldsymbol{\xi}}\phi(s)\right)h\right]=\lim_{n\to\infty}\E^{P_n}\left[\left({\tt M}^{\boldsymbol{\xi}}\phi(t)-{\tt M}^{\boldsymbol{\xi}}\phi(s)\right)h\right]=0, \end{align*}
		where the convergence follows from the fact that ${\tt M}^{\boldsymbol{\xi}}\phi(t)$ has at most linear growth in $\omega\in\Omega$. This implies that $P\in R(\boldsymbol{\xi})$. Finally, the convexity of $R(\boldsymbol{\xi})$ can be easily checked, which concludes the first assertion.
		
		(ii) Note that $\Delta(\boldsymbol{\xi})$ is continuous in $\omega\in\Omega$ due to Proposition 3.4 of Zalinescu \cite{Zalinescu}, and so is $\mathcal{J}(P,\boldsymbol{\xi})$ in $P\in R(\boldsymbol{\xi})$. Recall that a continuous mapping has a minimum point over a compact set, and therefore $R_{\rm opt}(\boldsymbol{\xi})$ is nonempty. It can be easily verified that $R_{\rm opt}(\boldsymbol{\xi})$ is closed and convex. As a closed subset of the compact set $R(\boldsymbol{\xi})$, $R_{\rm opt}(\boldsymbol{\xi})$ is also compact. Thus, the desired claim holds.
	\end{proof}
	
We next proceed the argument in \textbf{Step-(ii)}. Let $\comp(\Pc_2(\Omega))$ be the collection of compact subsets in $\Pc_2(\Omega)$ and define a metric $\dist(\cdot,\cdot)$ on $\comp(\Pc_2(\Omega))$ as follows:
\begin{align*}
\dist(K_1,K_2):=\inf\left\{\epsilon>0;~K_1\subset B_{\epsilon}(K_2) \text{ and } K_2\subset B_{\epsilon}(K_1) \right\},~~\forall K_1,K_2\in\comp(\Pc_2(\Omega))
\end{align*}
with $B_{\epsilon}(K):=\{Q\in\Pc_2(\Omega);~\mathcal{W}_{2,\Omega}(Q,P)<\epsilon~\exists P\in K\}$ for $K\in{\rm comp}({\cal P}_2(\Omega))$. Then,  $(\comp(\Pc_2(\Omega)),\dist(\cdot,\cdot))$ is a Polish space (c.f. Chapter 12 in \cite{Stroock}).  We next construct the following two mappings:
	\begin{itemize}
		\item[(i)] $\textsf{R}:\Pc_2(\C\times\mathcal{Q})\mapsto \comp(\Pc_2(\Omega))$, which is defined by $\boldsymbol{\xi}\mapsto R(\boldsymbol{\xi})$;
		\item[(ii)] $\textsf{R}_{\rm opt}:\Pc_2(\C\times\mathcal{Q})\mapsto \comp(\Pc_2(\Omega))$, which is defined by $ \boldsymbol{\xi}\mapsto R_{\rm opt}(\boldsymbol{\xi})$.
	\end{itemize}
	We also introduce the graph $\Gr(\textsf{R})$ of the mapping $\textsf{R}$ by
	\begin{align*}
		\Gr(\textsf{R}):=\left\{(\boldsymbol{\xi},P)\in \Pc_2(\C\times\mathcal{Q})\times\Pc_2(\Omega);~\boldsymbol{\xi}\in\Pc_2(\C\times\mathcal{Q}),P\in R(\boldsymbol{\xi})\right\}.    
	\end{align*}
	
	We then show that $\textsf{R}$ is a closed mapping in the next result.
	\begin{lemma}\label{closed}
		The graph $\Gr(\textsf{R})$ of the mapping $\textsf{R}$ is closed in $\Pc_2(\C\times\mathcal{Q})\times\Pc_2(\Omega)$.
	\end{lemma}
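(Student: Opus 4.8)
The plan is to take an arbitrary sequence $(\boldsymbol{\xi}^n,P_n)_{n\ge1}\subset\Gr(\textsf{R})$, so that $P_n\in R(\boldsymbol{\xi}^n)$, converging to some $(\boldsymbol{\xi},P)$ in $\Pc_2(\C\times\mathcal{Q})\times\Pc_2(\Omega)$, and to verify that the limit $P$ satisfies (i)--(iv) of \defref{relaxed_control} for the flow induced by $\boldsymbol{\xi}$. Conditions (i)--(iii) are inherited in the limit by routine weak-convergence arguments: (i) because $Y_0$ is a continuous coordinate and $P_n\circ Y_0^{-1}=\Law^{\Pb}(\eta)$; (ii) because $\{Y_0\ge A_0\}$ is closed, so $P(Y_0\ge A_0)\ge\limsup_n P_n(Y_0\ge A_0)=1$ by the Portmanteau theorem; and (iii) because $P_n\circ(W,A)^{-1}=\hat P$ for every $n$ forces $P\circ(W,A)^{-1}=\hat P$.

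The substance of the lemma is to propagate the martingale property (iv). Fixing $\phi\in C_b^2(\R\times\R)$, times $0\le s\le t\le T$, and a bounded continuous $\F_s$-measurable $h$, and writing $\Phi^{\boldsymbol{\zeta}}:=(\mathtt{M}^{\boldsymbol{\zeta}}\phi(t)-\mathtt{M}^{\boldsymbol{\zeta}}\phi(s))\,h$, it suffices to deduce $\E^P[\Phi^{\boldsymbol{\xi}}]=0$ from the identities $\E^{P_n}[\Phi^{\boldsymbol{\xi}^n}]=0$. I would use the decomposition
\begin{align*}
\E^P[\Phi^{\boldsymbol{\xi}}]=\underbrace{\E^P\big[\Phi^{\boldsymbol{\xi}}-\Phi^{\boldsymbol{\xi}^n}\big]}_{(I)}+\underbrace{\big(\E^P[\Phi^{\boldsymbol{\xi}^n}]-\E^{P_n}[\Phi^{\boldsymbol{\xi}^n}]\big)}_{(II)}
\end{align*}
and show that both $(I)$ and $(II)$ vanish along the sequence.

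The genuinely delicate term is $(I)$, which isolates the sensitivity of the generator to the measure flow; only its drift part depends on $\boldsymbol{\zeta}$, through $\int_s^t\int_U\bar{\mathbb L}\phi(r,X_r,Y_r,W_r,\zeta_r,u)\Lambda_r(\d u)\,\d r$. Here I would exploit the separation structure (B1) of \lemref{extension_ass}, splitting $\tb,\ts^2$ into the part $(b_1,\sigma_1^2)$ depending on $\boldsymbol{\zeta}$ only through its first-marginal flow $(\mu_r)$ and the part $(\tb_2,\ts_2^2)$ depending on the whole flow $(\xi_r)$. Since the projection $\C\ni\omega\mapsto\omega_r\in\R$ is $1$-Lipschitz, $\boldsymbol{\xi}^n\to\boldsymbol{\xi}$ yields $\sup_r\mathcal{W}_{2,\R}(\mu_r^n,\mu_r)\le\mathcal{W}_{2,\C}(\boldsymbol{\xi}^n|_{\C},\boldsymbol{\xi}|_{\C})\to0$ exactly as in the classical MFG; combined with the uniform-in-$(r,x,u)$ continuity of $b_1,\sigma_1$ in $\mu$ postulated in (A1), this disposes of the $(b_1,\sigma_1^2)$ contribution. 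For the $(\tb_2,\ts_2^2)$ contribution---whose control-marginal of $\xi_r$ may be discontinuous in time---I would invoke (A5) together with \lemref{Lipschitz_P} to bound the coefficient difference pointwise by a constant times $\mathcal{W}_{2,\R\times\Pc(U)}(\xi_r^n,\xi_r)^2$ and then integrate in time:
\begin{align*}
\int_s^t\Big|(\tb_2,\ts_2^2)(r,X_r,\xi_r^n,u)-(\tb_2,\ts_2^2)(r,X_r,\xi_r,u)\Big|\,\d r\le M\int_0^T\mathcal{W}_{2,\R\times\Pc(U)}(\xi_r^n,\xi_r)^2\,\d r,
\end{align*}
the right-hand side tending to $0$ by the $\Pc_2(\C\times\mathcal{Q})$-analogue of \lemref{CB_property} (the linear $\ts$-entry of $\bar\sigma\bar\sigma^{\T}$ reduces to this $\ts^2$-estimate via $|\sqrt a-\sqrt b|\le\sqrt{|a-b|}$ and Cauchy--Schwarz). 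Since $\Lambda_r$ is a probability measure on $U$ and $h$ is bounded, these bounds give $(I)\to0$; crucially, the identical estimate holds with $P$ replaced by any $P_n$, uniformly in $n$. This is the heart of the argument and the very reason the separation conditions (A1) and (A5) are imposed, turning the (false) pointwise-in-time continuity of $r\mapsto\xi_r$ into an $L^2$-in-time continuity that does hold under $\boldsymbol{\xi}^n\to\boldsymbol{\xi}$.

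For $(II)$ I would first replace $\Phi^{\boldsymbol{\xi}^n}$ by $\Phi^{\boldsymbol{\xi}}$ under $P_n$, the error being controlled uniformly in $n$ by the estimate just obtained (using the $p$-th moment bounds of \lemref{moment_p}), and then pass $\E^{P_n}[\Phi^{\boldsymbol{\xi}}]\to\E^P[\Phi^{\boldsymbol{\xi}}]$ by weak convergence. The functional $\omega\mapsto\Phi^{\boldsymbol{\xi}}(\omega)$ is continuous on $\Omega$: the Lipschitz continuity of the Skorokhod map in \equref{Lipschitz_Gamma} makes $X=\Gamma(A,Y)$ continuous in $(A,Y)$, the joint continuity of $\tb,\ts$ in $(x,\xi,u)$ from (B1) handles the coefficients, and the compactness of $U$ gives continuity of the $\Lambda_r(\d u)$-integration in the $\mathcal{Q}$-metric; moreover $\Phi^{\boldsymbol{\xi}}$ has at most quadratic growth in $\omega$ by (B3)--(B4) and the linear growth of $c$ in (A6). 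Since $P_n\to P$ in $\Pc_2(\Omega)$ and the $p$-th moments are uniformly bounded with $p>2$, continuous functionals of quadratic growth are $P_n$-uniformly integrable and their expectations converge, so $(II)\to0$. Combining the two limits yields $\E^P[\Phi^{\boldsymbol{\xi}}]=0$ for all admissible $(\phi,s,t,h)$, which establishes (iv) and hence the closedness of $\Gr(\textsf{R})$. The main obstacle throughout is term $(I)$---precisely the discontinuity-of-the-measure-flow issue flagged in \remref{ass1_remark}.
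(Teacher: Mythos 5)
Your overall skeleton matches the paper's: conditions (i)--(iii) pass to the limit routinely, and the work is in propagating the martingale property by splitting the error into a coefficient-perturbation term and a measure-convergence term. Your treatment of the $(b_1,\sigma_1)$ contribution (via $\sup_r\mathcal{W}_{2,\R}(\mu^n_r,\mu_r)\le\mathcal{W}_{2,\C}(\boldsymbol{\xi}^n|_{\C},\boldsymbol{\xi}|_{\C})$ and uniform continuity in $\mu$) and of the weak-convergence term $(II)$ is consistent with what the paper does.

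However, there is a genuine gap at what you yourself identify as the heart of the argument. Your bound for the $(\tb_2,\ts_2^2)$ contribution requires
\begin{align*}
\int_0^T\mathcal{W}_{2,\R\times\Pc(U)}(\xi_r^n,\xi_r)^2\,\d r\longrightarrow 0,
\end{align*}
which you justify by an alleged ``$\Pc_2(\C\times\mathcal{Q})$-analogue of \lemref{CB_property}.'' No such analogue holds. \lemref{CB_property} is true for $\C\times\mathcal{B}$ precisely because $d_{\mathcal{B}}$ is the $L^2([0,T];U)$ metric, which directly controls the time-integrated pointwise distance. The metric $d_{\mathcal{Q}}$ on relaxed controls is the Wasserstein distance of the time--space measures on $[0,T]\times U$, under which the disintegration $q\mapsto q_t$ is not stable. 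Concretely, let $\alpha^n$ oscillate between $u_1$ and $u_2$ on intervals of length $1/n$, set $\boldsymbol{\xi}^n=\delta_{(0,\,\delta_{\alpha^n_t}(\d u)\d t)}$ and $\boldsymbol{\xi}=\delta_{(0,\,\frac12(\delta_{u_1}+\delta_{u_2})(\d u)\d t)}$. Then $\mathcal{W}_{2,\C\times\mathcal{Q}}(\boldsymbol{\xi}^n,\boldsymbol{\xi})\to0$, yet $\mathcal{W}_{2,\R\times\Pc(U)}(\xi^n_r,\xi_r)^2=\tfrac12|u_1-u_2|^2$ for every $r$ and $n$, so the time integral does not vanish. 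This is exactly the ``discontinuity of the measure flow'' obstruction flagged in \remref{ass1_remark}; your argument reintroduces the very object (time-marginal convergence) that the relaxed formulation cannot provide.

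The paper's proof avoids the marginals altogether. Using the separation structure and Fubini, the $\tb_2$ term is rewritten as $\int_{\C\times\mathcal{Q}}\Xi(t,X',\Lambda',\omega)\,\boldsymbol{\xi}(\d X',\d\Lambda')$, where $\Xi$ contains the time integral \emph{inside}: $\Xi(t,X',\Lambda',\omega)=\int_0^t\int_U\int_U b_3(s,X_s(\omega),\mu_s,u,X_s',u')\nabla_y\phi(Y_s(\omega),W_s(\omega))\Lambda'_s(\d u')\Lambda_s(\omega,\d u)\d s$. The time-averaging makes $(X',\Lambda')\mapsto\Xi(t,X',\Lambda',\omega)$ continuous on $\C\times\mathcal{Q}$ --- indeed dominated by $\|X'-Y'\|_\infty^2+d_{\mathcal{Q}}(\Lambda',\Upsilon')^2$ via (A5) and Kantorovich duality on $[0,T]\times U$ --- so that $\mathcal{W}_{2,\C\times\mathcal{Q}}(\boldsymbol{\xi}^n,\boldsymbol{\xi})\to0$ suffices, uniformly in $(t,\omega)$. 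To repair your proof you would need to replace your pointwise-in-$r$ estimate and time integration by this path-space functional argument; as written, the step fails.
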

	
\begin{proof}
Let $(\boldsymbol{\xi}_n)_{n\geq1}\subset \Pc_2(\C\times\mathcal{Q})$ and $P_n\in R(\boldsymbol{\xi}_n)$ for $n\geq1$ such that $(\boldsymbol{\xi}_n,P_n)\to(\boldsymbol{\xi},P)$ in $\Pc_2(\C\times\mathcal{Q})\times\Pc_2(\Omega)$ as $n\to\infty$. It suffices to show that $P\in R(\boldsymbol{\xi})$. To this end, it is sufficient to verify (iv) of \defref{relaxed_control}. Let $\mu_t^n$ be the first marginal of $\xi_t^n$. Recall the coefficients $\bar{b}$ and $\bar{\sigma}$ defined in \defref{relaxed_control}. Then, for any $\phi\in C_b^2(\R^n\times\R^n)$,
\begin{align*}
&\int_0^t\int_U\left(\bar b(s,X_s,\xi_s^n,u)-\bar b(s,X_s,\xi_s,u)\right)^{\T}\nabla\phi(Y_s,W_s)\Lambda_s(\d u)\d s\\
&\quad=\int_0^t\int_U\left(b_1(s,X_s,\mu_s^n,u)-b_1(s,X_s,\mu_s,u)\right)\nabla_y\phi(Y_s,W_s)\Lambda_s(\d u)\d s\\
&\qquad+k_1\int_0^t\int_U\left(\tb_2(s,X_s,\xi_s^n,u)-\tb_2(s,X_s,\xi_s,u)\right)\nabla_y\phi(Y_s,W_s)\Lambda_s(\d u)\d s\\
&\quad=\int_0^t\int_U\left(b_1(s,X_s,\mu_s^n,u)-b_1(s,X_s,\mu_s,u)\right)\nabla_y\phi(Y_s,W_s)\Lambda_s(\d u)\d s\\
&\qquad+k_1\int_0^t\int_U\int_{\R\times\mathcal{M}(U)}\int_U\left(b_3(s,X_s,\mu_s^n,u,x',u')-b_3(s,X_s,\mu_s,u,x',u')\right)\\
&\qquad\qquad\qquad\times\nabla_y\phi(Y_s,W_s)q'(\d u')\xi_s(\d x',\d q')\Lambda_s(\d u)\d s\\
&\quad=:I_1+k_1 I_2.
\end{align*}
For $(X',\Lambda')\in\C\times\mathcal{Q}$, let us define
\begin{align*}
\Xi^n(t,X',\Lambda',\omega)&:=\int_0^t\int_U\int_Ub_3(s,X_s(\omega),\mu_s^n,u,X_s',u')\nabla_y\phi(Y_s(\omega),W_s(\omega))\Lambda_s'(\d u')\Lambda_s(\omega,\d u)\d s,\\
\Xi(t,X',\Lambda',\omega)&:=\int_0^t\int_U\int_Ub_3(s,X_s(\omega),\mu_s,u,X_s',u')\nabla_y\phi(Y_s(\omega),W_s(\omega))\Lambda_s'(\d u')\Lambda_s(\omega,\d u)\d s.
\end{align*}
In light of the uniform continuity of $b_1,b_3$ imposed in \assref{ass1}, we have
		\begin{align}\label{uniform_convergence_pre}
			\lim_{n\to\infty}\sup_{(t,\omega)\in[0,T]\times\Omega}\left(|I_1|+\sup_{(X',\Lambda')\in\C\times\mathcal{Q}}\left|\Xi^n(t,X',\Lambda',\omega)-\Xi(t,X',\Lambda',\omega)\right|\right)=0.
		\end{align}
		On the other hand, we also have that, for $(t,\omega)\in[0,T]\times\Omega$,
		{\scriptsize\begin{align*}
				&|\Xi(t,X',\Lambda',\omega)-\Xi(t,Y',\Upsilon',\omega)|\\
				\leq&\left|\int_0^t\int_U\left(\int_Ub_3(s,X_s(\omega),\mu_s,u,X_s',u')\Lambda_s'(\d u')-\int_Ub_3(s,X_s(\omega),\mu_s,u,Y_s',u')\Upsilon_s'(\d u')\right)\nabla_y\phi(Y_s(\omega),W_s(\omega))\Lambda_s(\omega,\d u)\d s\right|\\
				\leq&\left|\int_0^t\int_U\int_U\left(b_3(s,X_s(\omega),\mu_s,u,X_s',u')-b_3(s,X_s(\omega),\mu_s,u,Y_s',u')\right)\nabla_y\phi(Y_s(\omega),W_s(\omega))\Lambda_s'(\d u)\Lambda_s(\omega,\d u)\d s\right|\\
				&+\left|\int_0^t\int_U\left(\int_Ub_3(s,X_s(\omega),\mu_s,u,Y_s',u')\Lambda_s'(\d u')-\int_Ub_3(s,X_s(\omega),\mu_s,u,Y_s',u')\Upsilon_s'(\d u')\right)\nabla_y\phi(Y_s(\omega),W_s(\omega))\Lambda_s(\omega,\d u)\d s\right|\\
				=&:J_1+J_2.
		\end{align*}}
From (A5) of \assref{ass1}, it follows that $|J_1|\leq MT\|X'-Y'\|_{\infty}^2$. Let us set
\begin{align*}
\iota(t,\omega,u'):=\int_Ub_3\left(t,X_t(\omega),\mu_t,u,Y_t',u'\right)\nabla_y\phi(Y_t(\omega),W_t(\omega))\Lambda_t(\omega,\d u).    
\end{align*}
Then, by (A5) of \assref{ass1} again,  $|\iota(\omega,u')-\iota(\omega,v')|\leq M'|u'-v'|^2$ for all $\omega\in\Omega$ and $u',v'\in U$ with $M'$ depending only on $(\phi,M)$ in \assref{ass1}. Thanks to Kantorovich duality, for all $(t,\omega)\in [0,T]\times\Omega$, it holds that
\begin{align*}
J_2&=\left|\int_0^t\int_U\iota(s,\omega,u')\Lambda_s'(\d u')\d s-\int_0^t\int_U\iota(s,\omega,u')\Upsilon'_s(\d u')\d s\right|\\
&\leq M'\sup_{h:|h(u')-h(v')|\leq |u'-v'|^2}\left(\int_0^t\int_Uh(s,\omega,u')\Lambda'_s(\d u')\d s-\int_0^t\int_Uh(s,\omega,u')\Upsilon'_s(\d u')\d s\right)\\
&\leq M'd_{\mathcal{Q}}(\Lambda',\Upsilon')^2,
\end{align*}
which yields that, for some constant $C>0$ independent of $(t,\omega)$,
\begin{align*}
\sup_{(t,\omega)\in[0,T]\times\Omega}\left|\Xi(t,X',\Lambda',\omega)-\Xi(t,Y',\Upsilon',\omega)\right|\leq C\left(\|X'-Y'\|_{\infty}^2+d_{\mathcal{Q}}(\Lambda',\Upsilon')^2\right).    
\end{align*}
As a result, together with Kantorovich duality, we can derive that
\begin{align}\label{convergence_pre}
\lim_{n\to\infty}\sup_{(t,\omega)\in[0,T]\times\Omega}\left|\int_{\C\times\mathcal{Q}}\Xi(t,X',\Lambda',\omega)(\xi^n(\d X',\d\Lambda')-\xi(\d X',\d\Lambda'))\right|=0.
\end{align}
Combing \equref{uniform_convergence_pre} and \equref{convergence_pre}, we have $\lim_{n\to\infty}(|I_1|+|I_2|)=0$. Thus, it holds that
\begin{align*}
\lim_{n\to\infty}\int_0^t\int_U\bar b(s,X_s,\xi_s^n,u)^{\T}\nabla\phi(Y_s,W_s)\Lambda_s(\d u)\d s=\int_0^t\int_U\bar b(s,X_s,\xi_s,u)^{\T}\nabla\phi(Y_s,W_s)\Lambda_s(\d u)\d s  
\end{align*}
uniformly with respect to $(t,\omega)\in [0,T]\times\Omega$. In the same way, we can also show that\begin{align*}
\lim_{n\to\infty} \sup_{(t,\omega)\in [0,T]\times\Omega}&\left|\int_0^t\int_U\left[\tr\left(\frac12\bar\sigma\bar\sigma^{\T}(t,X_s,\xi_s^n,u)\nabla^2\phi(Y_s,W_s)\right)\right.\right.\\
&\left.\left.-\tr\left(\frac12\bar\sigma\bar\sigma^{\T}(t,X_s,\xi_s,u)\nabla^2\phi(Y_s,W_s)\right)\right]\Lambda_s(\d u)\d s\right|=0.
\end{align*}
Thus, we can derive that
		\begin{align}\label{uniform_convergence}
			\lim_{n\to\infty}\sup_{(t,\omega)\in[0,T]\times\Omega}\left|{\tt M}^{\boldsymbol{\xi}_n}\phi(t,\omega)-{\tt M}^{\boldsymbol{\xi}}\phi(t,\omega)\right|=0.
		\end{align}
Then, for any bounded $\F_s$-measurable $\RV$ $h$ with $s<t$, it holds that
\begin{align*}
&\varlimsup_{n\to\infty}\left|\E^{P_n}\left[{\tt M}^{\boldsymbol{\xi}_n}(t)h\right]-\E^P\left[{\tt M}^{\boldsymbol{\xi}}(t)h\right]\right|\leq \varlimsup_{n\to\infty}\E^{P_n}\left[\left|\left({\tt M}^{\boldsymbol{\xi}_n}(t)-{\tt M}^{\boldsymbol{\xi}}(t)\right)h\right|\right]\nonumber\\
&\quad+\varlimsup_{n\to\infty}\left|\E^{P_n}\left[{\tt M}^{\boldsymbol{\xi}}(t)h\right]-\E^P\left[{\tt M}^{\boldsymbol{\xi}}(t)h\right]\right|=0,
\end{align*}
where the last equality follows from the fact that $\mathcal{W}_{2,\Omega}(P_n,P)\to 0$ as $n\to\infty$ and \equref{uniform_convergence}. By virtue of the martingale property of ${\tt M}^{\boldsymbol{\xi}_n}=({\tt M}^{\boldsymbol{\xi}_n}(t))_{t\in[0,T]}$ under $P_n$, the conclusion holds.
	\end{proof}
        \begin{remark}\label{closed_Lip}
            Note that here we only require the Lipschitz condition of $(b,\sigma)$ with respect to $x\in\R$. If we further assume the Lipschitz condition with respect to $\rho\in\Pc_2(\R\times\R^l)$ (\assref{ass3}), then the separation conditions in {\rm (A1)} and {\rm (A5)} of \assref{ass1} are redundant and can be omitted, and \lemref{closed} follows directly from \lemref{CB_property}
        \end{remark}

With the aid of the closed graph theorem (c.f. Proposition 17.11 in Aliprantis and Border \cite{Aliprantis}), we can prove the continuity of the mapping $\textsf{R}$ defined before \lemref{closed}.
\begin{lemma}\label{continuity}
The set-valued mapping $\textsf{R}:\Pc_2(\C\times\mathcal{Q})\mapsto \comp(\Pc_2(\Omega))$ is continuous. 
\end{lemma}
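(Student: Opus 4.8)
The plan is to prove that the compact-valued correspondence $\textsf{R}$ is continuous by establishing that it is simultaneously upper and lower hemicontinuous; since each value $R(\boldsymbol{\xi})$ is compact in $\Pc_2(\Omega)$ by \lemref{compactness}, these two properties together are equivalent to continuity with respect to the Hausdorff metric $\dist(\cdot,\cdot)$. As continuity is a sequential property, I would fix an arbitrary sequence $\boldsymbol{\xi}_n\to\boldsymbol{\xi}$ in $\Pc_2(\C\times\mathcal{Q})$ and argue along it.

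For upper hemicontinuity I would invoke the closed graph theorem. Since \lemref{closed} already shows that $\Gr(\textsf{R})$ is closed, the only missing ingredient is a compact set containing the relevant images, after which Proposition 17.11 in Aliprantis and Border \cite{Aliprantis} applies. I claim that $\overline{\bigcup_n R(\boldsymbol{\xi}_n)}$ is compact in $\Pc_2(\Omega)$: this is the tightness and precompactness argument of \lemref{compactness}, made uniform over $n$. Indeed, the $\Lambda$-marginals all live in the compact set $\mathcal{Q}$, the $(W,A)$-marginals all equal $\hat P$, and for the $Y$-marginals the estimate of \lemref{moment_p} is uniform along the sequence. The key point is that the compactness of $U$ bounds the control contribution to $M_2(\xi_{n,t})$, so that $M_2(\xi_{n,t})$ is controlled by the second moment of the first marginal of $\xi_{n,t}$, and $\sup_n\int_{\C\times\mathcal{Q}}\|\boldsymbol{x}\|_\infty^2\,\boldsymbol{\xi}_n(\d\boldsymbol{x},\d q)<\infty$ since $\boldsymbol{\xi}_n$ converges in $\mathcal{W}_{2,\C\times\mathcal{Q}}$; feeding this into the Gronwall and Burkholder estimates underlying \lemref{moment_p} yields a bound on $\E^{P}[\sup_{t}|Y_t|^p]$ that is uniform over $P\in R(\boldsymbol{\xi}_n)$ and $n$. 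Restricting $\textsf{R}$ to a neighborhood of $\boldsymbol{\xi}$ whose image lies in this compact set then delivers upper hemicontinuity at $\boldsymbol{\xi}$.

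For lower hemicontinuity I would argue by an explicit construction. Given any $P\in R(\boldsymbol{\xi})$, I use the martingale-measure representation of \lemref{moment_p} to realize $P$ on a probability space carrying the driving data $(\Lambda,W,A,\mathcal{M})$ together with the solution $Y^{\Lambda,\boldsymbol{\xi}}$. Keeping these driving inputs fixed, I re-solve the state equation with the coefficients $\tb,\ts$ evaluated at the flow $\boldsymbol{\xi}_n$ in place of $\boldsymbol{\xi}$, producing $Y^{\Lambda,\boldsymbol{\xi}_n}$, and set $P_n:=\Law(Y^{\Lambda,\boldsymbol{\xi}_n},\Lambda,W,A)$, which belongs to $R(\boldsymbol{\xi}_n)$ by construction. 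Because only the $Y$-component is altered while $(\Lambda,W,A)$ is preserved verbatim, I obtain $\mathcal{W}_{2,\Omega}(P_n,P)^2\le\E[\|Y^{\Lambda,\boldsymbol{\xi}_n}-Y^{\Lambda,\boldsymbol{\xi}}\|_\infty^2]$, and the right-hand side tends to $0$ by the same stability mechanism that drives \lemref{closed}: the continuous dependence of the coefficients on the measure argument supplied by the separation conditions (A1) and (A5), or alternatively by the joint Lipschitz condition of \assref{ass3}, combined with Gronwall's inequality. This gives $P_n\to P$ and hence lower hemicontinuity, completing the proof.

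I expect the main obstacle to lie in the $Y$-component stability estimate invoked in both halves, namely controlling $\E[\|Y^{\Lambda,\boldsymbol{\xi}_n}-Y^{\Lambda,\boldsymbol{\xi}}\|_\infty^2]$ (for lower hemicontinuity) and the uniform moment bound (for upper hemicontinuity), since the flow $\boldsymbol{\xi}_n$ need not converge uniformly in $t$ but only in the weaker metric $\mathcal{W}_{2,\C\times\mathcal{Q}}$. This is precisely the discontinuity-of-the-measure-flow difficulty emphasized in the introduction, and it is overcome by the separation structure already exploited in \lemref{closed} (through \lemref{CB_property} and the uniform continuity built into (A1) and (A5)); the subtle part is ensuring that these estimates are uniform along the sequence so that the closed-graph argument genuinely operates on a compact range.
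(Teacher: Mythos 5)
Your proposal is correct and follows essentially the same route as the paper: upper semi-continuity via the closed graph theorem applied to \lemref{closed}, and lower semi-continuity by realizing $P$ through the martingale-measure representation of \lemref{moment_p}, re-solving the state SDE with $\boldsymbol{\xi}_n$ in place of $\boldsymbol{\xi}$ while keeping $(\Lambda,W,A,\mathcal{M})$ fixed, and showing $\E[\sup_t|Y_t^n-Y_t|^2]\to 0$ via the stability mechanism of \lemref{closed} and Gronwall/BDG. The only difference is that you explicitly supply the uniform moment bound needed to place the images in a compact set before invoking the closed graph theorem, a detail the paper leaves implicit.
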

	
\begin{proof}
The upper semi-continuity of $\textsf{R}$ follows from the closed graph theorem, and it suffices to show the lower semi-continuity of $\textsf{R}$. Let $\boldsymbol{\xi}^n\to\boldsymbol{\xi}$  in $\Pc_2(\C\times\mathcal{Q})$ as $n\to\infty$ and $P\in R(\boldsymbol{\xi})$. In light of \lemref{moment_p}, there exists a filtered probability space $(\Omega',\F',\Fb',P')$ supporting a scalar $\Fb'$-adapted process $Y=(Y_t)_{t\in[0,T]}$, a standard scalar $\Fb'$- Brownian motion $W=(W_t)_{t\in [0,T]}$, an $\F_0'$-measurable $\RV$ $\eta'$ with law $\Law^{\Pb}(\eta)$,  an $\Fb'$-martingale measure ${\cal M}$ on $U\times[0,T]$, with $\mathbb{F}'$-progressively measurable intensity $\Lambda_t(\d u)\d t$ and a real-valued $\mathbb{F}'$-adapted continuous process $A=(A_t)_{t\in[0,T]}$ such that $P=P'\circ (Y^{\Lambda,\boldsymbol{\xi}},\Lambda,W,A)^{-1}$  and (i)-(iv) of \lemref{moment_p} hold. On the other hand, \lemref{extension_ass} ensures that, for each $n\geq1$, we can strongly solve the SDE:
\begin{align*}
\d Y_t^n=\int_U\tb (t,X_t^n,\xi_t^n,u)\Lambda_t(\d u)\d t+\int_U\ts (t,X_t^n,\xi_t^n,u){\cal M}(\d u,\d t),~~Y_0^n=\eta',    
\end{align*}
where $X_t^n=\Gamma(A,Y^n)_t$ and $\xi_t^n$ denotes the $t$-marginal distribution of $\boldsymbol{\xi}^n$. The Lipschitz condition in \lemref{extension_ass} with the proof in \lemref{closed} and the BDG inequality yield that
\begin{align*}
\lim_{n\to\infty}\E^{P'}\left[\sup_{t\in [0,T]}\left|Y_t^n-Y_t\right|^2\right]= 0.    
\end{align*}
Set $P_n:=P'\circ(Y^n,\Lambda,W,A)^{-1}$. Then, we have $P_n\in R(\boldsymbol{\xi}_n)$ by construction and $P_n\to P$ in $\Pc_2(\Omega)$ as $n\to\infty$. The proof is thus complete.
\end{proof}  
	
We next follow Theorem 5.7 in Karoui et. al. \cite{Karoui1} to establish the upper semi-continuity of the mapping $\textsf{R}_{\rm opt}$ defined before \lemref{closed}:
\begin{lemma}\label{u.s.c.}
The value function $\inf_{P\in R(\boldsymbol{\xi})}\mathcal{J}(\boldsymbol{\xi},P)$ is continuous in $\boldsymbol{\xi}\in\Pc_2(\C\times\mathcal{Q})$, and moreover, the set-valued mapping $\textsf{R}_{\rm opt}:\Pc_2(\C\times\mathcal{Q})\mapsto \comp(\Pc_2(\Omega))$ is upper semi-continuous. 
\end{lemma}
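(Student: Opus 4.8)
The plan is to deduce both claims from the classical maximum theorem (in the form of Theorem 5.7 in El Karoui et al.~\cite{Karoui1}). Two of the three hypotheses are already available: by \lemref{compactness} the correspondence $\textsf{R}$ has nonempty, compact, convex values, and by \lemref{continuity} it is continuous (both upper and lower semi-continuous). The only missing ingredient is the \emph{joint continuity of the cost on the graph}: if $(\boldsymbol{\xi}_n,P_n)\to(\boldsymbol{\xi},P)$ in $\Pc_2(\C\times\mathcal{Q})\times\Pc_2(\Omega)$ with $P_n\in R(\boldsymbol{\xi}_n)$, then $\mathcal{J}(P_n;\boldsymbol{\xi}_n)\to\mathcal{J}(P;\boldsymbol{\xi})$. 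I would establish this first, then read off the continuity of $v^*(\boldsymbol{\xi})=\inf_{P\in R(\boldsymbol{\xi})}\mathcal{J}(P;\boldsymbol{\xi})$ and the upper semi-continuity of $\textsf{R}_{\rm opt}$ by the standard subsequence arguments.

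For the joint continuity I would split $|\mathcal{J}(P_n;\boldsymbol{\xi}_n)-\mathcal{J}(P;\boldsymbol{\xi})|\le \E^{P_n}[|\Delta(\boldsymbol{\xi}_n)-\Delta(\boldsymbol{\xi})|]+|\E^{P_n}[\Delta(\boldsymbol{\xi})]-\E^{P}[\Delta(\boldsymbol{\xi})]|$, with $\Delta$ as in \equref{costDel}. The second term tends to $0$ because $\Delta(\boldsymbol{\xi})$ is continuous in $\omega$ with at most quadratic growth (using the Lipschitz property of $\Gamma$ in \equref{Lipschitz_Gamma} for the terms involving $X=\Gamma(A,Y)$ and $R^A=X-Y$) and $\mathcal{W}_{2,\Omega}(P_n,P)\to0$, so integrals of quadratic-growth continuous functionals converge. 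For the first term, the reflection integral $\int_0^T c(t,X_t)\,\d R^A_t$ does not depend on $\boldsymbol{\xi}$ and cancels; the terminal term converges because $\mu_T^n\to\mu_T$ in $\Pc_2(\R)$ (the map $(\boldsymbol{x},q)\mapsto\boldsymbol{x}(T)$ is continuous with quadratic growth, so $\boldsymbol{\xi}_n\to\boldsymbol{\xi}$ pushes forward to the required marginal convergence) together with the continuity and growth of $g$ from (A4). The running-cost term $\int_0^T\int_U\big(\tf(t,X_t,\xi^n_t,u)-\tf(t,X_t,\xi_t,u)\big)\Lambda_t(\d u)\,\d t$ is handled by repeating verbatim the estimates of \lemref{closed}: the decomposition (A1) of $\tf$ into $f_1$ and $k_3\tf_2$ and the quadratic Lipschitz bound (A5) on $f_3$ give, via Kantorovich duality, uniform-in-$\omega$ convergence to $0$. \emph{This last step is the main obstacle}, precisely because $\boldsymbol{\xi}_n\to\boldsymbol{\xi}$ in $\Pc_2(\C\times\mathcal{Q})$ is strictly weaker than $\sup_t\mathcal{W}_{2,\R\times U}(\xi^n_t,\xi_t)\to0$, so the separation structure (A1)/(A5) is exactly what rescues the argument.

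Granting joint continuity, the continuity of $v^*$ follows in two directions. For the upper bound, fix $P^*\in R_{\rm opt}(\boldsymbol{\xi})$; by lower semi-continuity of $\textsf{R}$ there exist $P_n\in R(\boldsymbol{\xi}_n)$ with $P_n\to P^*$, whence $\limsup_n v^*(\boldsymbol{\xi}_n)\le\limsup_n\mathcal{J}(P_n;\boldsymbol{\xi}_n)=\mathcal{J}(P^*;\boldsymbol{\xi})=v^*(\boldsymbol{\xi})$. For the lower bound, choose $P_n\in R_{\rm opt}(\boldsymbol{\xi}_n)$ along a subsequence realizing $\liminf_n v^*(\boldsymbol{\xi}_n)$; since $\boldsymbol{\xi}_n\to\boldsymbol{\xi}$ keeps $\sup_n M_2(\boldsymbol{\xi}_n)<\infty$, the $L^p$-bound of \lemref{moment_p} is uniform in $n$, so $\{P_n\}$ is relatively compact in $\Pc_2(\Omega)$ (tightness plus uniform integrability, as in \lemref{compactness}). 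Passing to a further convergent subsequence $P_{n_k}\to\bar P$, \lemref{closed} gives $\bar P\in R(\boldsymbol{\xi})$, and joint continuity yields $\liminf_n v^*(\boldsymbol{\xi}_n)=\mathcal{J}(\bar P;\boldsymbol{\xi})\ge v^*(\boldsymbol{\xi})$. Hence $v^*$ is continuous.

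Finally, to prove that $\textsf{R}_{\rm opt}$ is upper semi-continuous, let $\boldsymbol{\xi}_n\to\boldsymbol{\xi}$ and $P_n\in R_{\rm opt}(\boldsymbol{\xi}_n)$. By the same uniform moment estimate, $\{P_n\}$ lies in a common compact subset of $\Pc_2(\Omega)$, so along a subsequence $P_{n_k}\to\bar P$; \lemref{closed} gives $\bar P\in R(\boldsymbol{\xi})$, and combining joint continuity with the just-established continuity of $v^*$ yields $\mathcal{J}(\bar P;\boldsymbol{\xi})=\lim_k\mathcal{J}(P_{n_k};\boldsymbol{\xi}_{n_k})=\lim_k v^*(\boldsymbol{\xi}_{n_k})=v^*(\boldsymbol{\xi})$, so $\bar P\in R_{\rm opt}(\boldsymbol{\xi})$. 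Since $\textsf{R}_{\rm opt}$ is compact-valued with range in a common compact set, this sequential criterion is precisely upper semi-continuity, completing the proof.
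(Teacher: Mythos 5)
Your proposal is correct and follows essentially the same route as the paper: both establish joint continuity of $(\boldsymbol{\xi},P)\mapsto\mathcal{J}(P;\boldsymbol{\xi})$ via the decomposition $\E^{P_n}[|\Delta(\boldsymbol{\xi}_n)-\Delta(\boldsymbol{\xi})|]+|\E^{P_n}[\Delta(\boldsymbol{\xi})]-\E^{P}[\Delta(\boldsymbol{\xi})]|$, with the first term controlled by the uniform-in-$(t,\omega)$ convergence inherited from the estimates of \lemref{closed}, and then invoke Theorem 5.7 of El Karoui et al.\ together with \lemref{compactness} and \lemref{continuity}. The only difference is that you unpack the maximum-theorem conclusion into explicit subsequence arguments (and treat the terminal-cost term explicitly), where the paper simply cites the theorem; the content is the same.
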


\begin{proof} Following the proof of \lemref{closed}, we are able to conclude that
\begin{align}\label{f_uniform_convergence}
\lim_{n\to\infty}\sup_{(t,\omega)\in[0,t]\times\Omega}\left|\int_0^T\int_U\tf(t,X_t,\xi_t^n,u)\Lambda_t(\d u)\d t-\int_0^T\int_U\tf(t,X_t,\xi_t,u)\Lambda_t(\d u)\d t\right|=0.       
\end{align}
Let $(\boldsymbol{\xi}^n,P^n)\to(\boldsymbol{\xi},P)$ in $\Pc_2(\C\times\mathcal{Q})\times\Pc_2(\Omega)$ as $n\to\infty$. Then, a simple calculation leads to that
\begin{align*}
\left|\mathcal{J}(P^n,\boldsymbol{\xi}^n)-\mathcal{J}(P,\boldsymbol{\xi})\right|&=\left|\E^{P^n}\left[\Delta(\boldsymbol{\xi}^n)\right]-\E^P\left[\Delta(\boldsymbol{\xi})\right]\right|\\
&\leq \E^{P^n}\left[\left|\Delta(\boldsymbol{\xi}^n)-\Delta(\boldsymbol{\xi})\right|\right]+\left|\E^{P^n}\left[\Delta(\boldsymbol{\xi})\right]-\E^P\left[\Delta(\boldsymbol{\xi})\right]\right|\\
&:= I_1+I_2.
\end{align*} 
Note that $I_1$ converges to $0$ due to \equref{f_uniform_convergence} and $I_2$ converges to $0$ due to the continuity of $P\mapsto \E^{P}\left[\Delta(\boldsymbol{\xi}\right]$ for every $\boldsymbol{\xi}\in\Pc_2(\C\times\mathcal{Q})$. Therefore, we derive the joint continuity of $(\boldsymbol{\xi},P)\mapsto\mathcal{J}(P,\boldsymbol{\xi})$.
		The result easily follows from Theorem 5.7 in Karoui \cite{Karoui1} by setting $Y=\Pc_2(\C\times\mathcal{Q})$, $X=\Pc_2(\Omega)$, $K=\textsf{R}$ and $w=\mathcal{J}$.
	\end{proof}
	
Finally, we prove the desired results in  \textbf{Step (iii)}. Recall that, to apply Kakutani's fixed point theorem, we need to restrict $\mathcal{R}^*$ to a compact and convex subset $\mathcal{M}$ of $\Pc_2(\C\times\mathcal{Q})$ such that $\mathcal{R}^*|_{\mathcal{M}}$ is a self-mapping. 
\begin{lemma}\label{compact_set}
Define the subset $\mathcal{M}$ of $\Pc_2(\C\times\mathcal{Q})$ by
\begin{align*}
\mathcal{M}:=\left\{\boldsymbol{\xi}\in\Pc_2(\C\times\mathcal{Q});~\int_{\C\times\mathcal{Q}}\|\boldsymbol{x}\|_{\infty}^p~\boldsymbol{\xi}(\d\boldsymbol{x},\d q)\leq C_1 \right\}.    
\end{align*}
with $C_1$ being the constant given by \equref{eq:C1Lemma36} in \lemref{moment_p}. Then, $\mathcal{M}$ is a compact and convex subset of $\Pc_2(\C\times\mathcal{Q})$. Moreover, $\mathcal{R}^*|_{\mathcal{M}}$ is a self-mapping (recall that $\mathcal{R}^*$ is defined in \equref{eq:Kstar}).
	\end{lemma}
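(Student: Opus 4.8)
My plan is to treat the three claims — convexity, compactness, and the self-mapping property — in turn; convexity is immediate, compactness is a tightness-plus-uniform-integrability argument, and the self-mapping property is the real crux. Convexity is clear because $\boldsymbol{\xi}\mapsto\int_{\C\times\mathcal{Q}}\|\boldsymbol{x}\|_\infty^p\,\boldsymbol{\xi}(\d\boldsymbol{x},\d q)$ is linear in $\boldsymbol{\xi}$: for $\boldsymbol{\xi}^1,\boldsymbol{\xi}^2\in\mathcal{M}$ and $\lambda\in[0,1]$, the combination $\lambda\boldsymbol{\xi}^1+(1-\lambda)\boldsymbol{\xi}^2$ has $p$-th sup-moment equal to $\lambda\int\|\boldsymbol{x}\|_\infty^p\boldsymbol{\xi}^1+(1-\lambda)\int\|\boldsymbol{x}\|_\infty^p\boldsymbol{\xi}^2\le C_1$, so it lies in $\mathcal{M}$. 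For closedness I would record that, since $\boldsymbol{x}\mapsto\|\boldsymbol{x}\|_\infty^p$ is continuous and nonnegative, the functional $\boldsymbol{\xi}\mapsto\int\|\boldsymbol{x}\|_\infty^p\boldsymbol{\xi}$ is lower semicontinuous for the $\mathcal{W}_{2,\C\times\mathcal{Q}}$-topology, hence its sublevel set $\mathcal{M}$ is closed.

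It then remains to show $\mathcal{M}$ is relatively compact in $(\Pc_2(\C\times\mathcal{Q}),\mathcal{W}_{2,\C\times\mathcal{Q}})$. The $\mathcal{Q}$-marginals are automatically tight since $\mathcal{Q}$ is compact and Polish. For the $\C$-marginals I would use the Kolmogorov tightness criterion: the modulus of continuity is controlled through the diffusive structure, exactly as in the estimate $\E|\boldsymbol{x}_t-\boldsymbol{x}_s|^p\le C|t-s|^{1+p/2}$ already exploited in \lemref{compactness}, which, together with the uniform bound $\int\|\boldsymbol{x}\|_\infty^p\boldsymbol{\xi}\le C_1$, yields tightness. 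Finally, because $p>2$, the uniform bound on the $p$-th moments forces uniform integrability of the second moments, so that weak precompactness upgrades to precompactness in $\mathcal{W}_{2,\C\times\mathcal{Q}}$ (cf.\ Proposition~B.3 in Lacker \cite{Lacker}). Closedness and relative compactness together give that $\mathcal{M}$ is compact.

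The main obstacle is the self-mapping property $\mathcal{R}^*(\mathcal{M})\subseteq\mathcal{M}$ for the mapping of \equref{eq:Kstar}, where uniformity of the moment bound in the mean-field input is what matters. Fix $\boldsymbol{\xi}\in\mathcal{M}$ and let $\boldsymbol{\xi}'\in\mathcal{R}^*(\boldsymbol{\xi})$, so that $\boldsymbol{\xi}'=P\circ(X,\Lambda)^{-1}$ for some $P\in R_{\rm opt}(\boldsymbol{\xi})\subseteq R(\boldsymbol{\xi})$. Since the first $\C$-marginal of $\boldsymbol{\xi}'$ is the law of the reflected state $X=\Gamma(A,Y^{\Lambda,\boldsymbol{\xi}})$, we have $\int_{\C\times\mathcal{Q}}\|\boldsymbol{x}\|_\infty^p\,\boldsymbol{\xi}'(\d\boldsymbol{x},\d q)=\E^{P}[\|X\|_\infty^p]$. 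I would then invoke the a priori estimate of \lemref{moment_p}, whose very purpose is that, for the law $\boldsymbol{\xi}'=P\circ(X,\Lambda)^{-1}$ produced by $P$, the bound $\E^{P}[\|X\|_\infty^p]\le C_1$ holds with $C_1$ depending only on $M$, $\Law^{\Pb}(\eta)$ and $T$.

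The delicate point — the one I expect to require the most care — is precisely securing this independence of the input $\boldsymbol{\xi}$. In the underlying Gr\"onwall argument the mean-field dependence enters only through the growth bound (B3) of \lemref{extension_ass} via the quantity $M_2(\xi_s)$, which I would control using $\int\|\boldsymbol{x}\|_\infty^p\boldsymbol{\xi}\le C_1$ (so that $\int|x|^2\xi_s\le C_1^{2/p}$ by Jensen) together with the compactness of $U$ (bounding the control contribution), arranging the constants so that the estimate closes at the common constant $C_1$ rather than amplifying it. Establishing that the output moment does not grow beyond $C_1$ is the heart of the matter; once it is in hand, $\int\|\boldsymbol{x}\|_\infty^p\boldsymbol{\xi}'\le C_1$ gives $\boldsymbol{\xi}'\in\mathcal{M}$, so $\mathcal{R}^*|_{\mathcal{M}}$ is a self-mapping, and combined with the continuity and compact-valuedness established in \lemref{compactness}, \lemref{continuity} and \lemref{u.s.c.} this sets up Kakutani's fixed point theorem.
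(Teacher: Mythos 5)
Your decomposition — convexity by linearity, closedness by lower semicontinuity of the $p$-th moment functional, precompactness by tightness plus uniform integrability, and the self-mapping property via the a priori bound of \lemref{moment_p} — is exactly the route the paper takes (its proof invokes Fatou's lemma together with the Skorokhod representation theorem for closedness, refers back to the proof of \lemref{compactness} for precompactness, and cites \lemref{moment_p} for the self-map). There is, however, one step in your argument that does not go through as written. You justify tightness of the $\C$-marginals of elements of $\mathcal{M}$ by the Kolmogorov criterion, appealing to the increment estimate $\E[|\boldsymbol{x}_t-\boldsymbol{x}_s|^p]\leq C|t-s|^{1+p/2}$ as being ``controlled through the diffusive structure''. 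But $\mathcal{M}$ is defined purely by the sup-norm moment bound $\int\|\boldsymbol{x}\|_{\infty}^p\,\boldsymbol{\xi}\leq C_1$: a generic $\boldsymbol{\xi}\in\mathcal{M}$ is not the law of any controlled diffusion and satisfies no increment estimate. For instance, $\boldsymbol{\xi}_n=\delta_{(x_n,q_0)}$ with $x_n(t)=\sin(nt)$ lies in $\mathcal{M}$ for every $n$ (once $C_1\geq 1$), yet $(x_n)_{n\geq1}$ has no uniformly convergent subsequence, so $(\boldsymbol{\xi}_n)_{n\geq1}$ has no limit point in $\Pc_2(\C\times\mathcal{Q})$; a uniform sup-norm bound gives no equicontinuity and hence no tightness in $\C$. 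The argument (and, strictly speaking, the paper's one-line appeal to the proof of \lemref{compactness}) only works if $\mathcal{M}$ is cut down further, e.g.\ by intersecting the moment ball with the set of laws satisfying the Kolmogorov-type increment bound that \lemref{moment_p} actually delivers, or by taking the closed convex hull of the image $\bigcup_{\boldsymbol{\xi}}\mathcal{R}^*(\boldsymbol{\xi})$, whose elements do inherit that estimate. Your proof should make this restriction explicit; without it the compactness claim is false for the set as defined.

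A second, smaller point: you correctly identify that the crux of the self-mapping property is that the Gr\"onwall estimate must ``close at the common constant $C_1$ rather than amplifying it'', but the proposal stops at announcing this. The naive open-loop estimate gives an output moment bounded by $K(1+C_1)$ with $K$ built from $M$, $T$, $\Law^{\Pb}(\eta)$ and the law of $A$, and $K(1+C_1)\leq C_1$ is not automatic. One must either show that the open-loop constant can be taken equal to the closed-loop constant $C_1$ of \lemref{moment_p} (exploiting that the input measure enters only through $M_2(\xi_s)$ and compactness of $U$, as you indicate), or enlarge $C_1$ by a separate fixed-point argument on the constants. Since the paper is equally terse here, this is not a deviation from its proof, but it is the computation a complete write-up must actually carry out.
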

	
\begin{proof}
We first show that $\mathcal{M}$ is closed. For $\boldsymbol{\xi}_n\in \mathcal{M}$ such that $\boldsymbol{\xi}_n\to \boldsymbol{\xi}$ in $\Pc_2(\C\times\mathcal{Q})$ as $n\to\infty$, by Fatou's lemma and the application of Skorokhod representation theorem if necessary, we can conclude that $\int_{\C\times\mathcal{Q}}\|\boldsymbol{x}\|_{\infty}^p~\boldsymbol{\xi}(\d\boldsymbol{x},\d q)\leq C_1$. This implies that $\boldsymbol{\xi}\in \mathcal{M}$. Moreover, we can verify the precompactness of $\mathcal{M}$ by following the proof of \lemref{compactness}. Hence, $\mathcal{M}$ is compact and is obviously convex. Lastly, by \lemref{moment_p} again, we derive that $\mathcal{R}^*$ is a self-mapping.
\end{proof}
	
Given all previous preparations, we can finally prove the main result--\thmref{existence_RMFE}.
\begin{proof}[Proof of \thmref{existence_RMFE}]
It follows from \lemref{compactness} and \lemref{compact_set} that,  $\mathcal{R}^*|_{\mathcal{M}}:\mathcal{M}\to\mathcal{M}$ is compact and convex. By using the continuity of the push-forward mapping 
\begin{align*}
S:\Pc_2(\Omega)\to \Pc_2(\C\times\mathcal{Q}),~P\mapsto P\circ (X,\Lambda)^{-1},    
\end{align*}
we have $\mathcal{R}^*=S\circ\textsf{R}_{\rm opt}$ is also upper semi-continuous. Consequently, by using Kakutani's fixed point theorem, there exists a fixed point $\boldsymbol{\xi}^*\in\mathcal{M}$ of the mapping $\mathcal{R}^*$ and $P^*\in R_{\rm opt}(\boldsymbol{\xi}^*)$ such that $\boldsymbol{\xi}^*= S(P^*)$. According to \defref{relaxed_MFE}, $(\boldsymbol{\xi}^*,P^*)$ is an (R)-MFE, which ends the proof.
\end{proof}
	
	\subsubsection{Proof of \thmref{propagation}}\label{proof:thm2}
	This subsection is dedicated to proving \thmref{propagation} by following the three-step procedure. 
	
	For \textbf{Step (i)}, we first borrow a precompactness criterion from Proposition A.2 in Carmona et. al. \cite{Carmona}, which is given as below.
	
	\begin{lemma}\label{precompactness_criterion}
		Let $(E,\ell)$ be a complete separable metric space. Assume that there exists a subset $K\subset\Pc_p(\Pc_p(E))$ such that $\{{\tt m}P\}_{P\in K}\subset\Pc(E)$ is tight, and for some $p'>p$ and  $x_0\in E$,
		\begin{align}\label{Lp'_boundedness}
			\sup_{P\in K}\int_E{\tt m}P(\d x)\ell(x,x_0)^{p'}<\infty,
		\end{align}
		where ${\tt m}P$ is the mean measure defined by ${\tt m}P(C):=\int_{\Pc_p(E)}\mu(C)P(\d\mu)$ for $C\in\mathcal{B}(E)$ and $P\in\Pc_p(\Pc_p(E))$. Then, the subset $K$ is precompact in $\Pc_p(\Pc_p(E))$.
	\end{lemma}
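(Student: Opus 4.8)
The plan is to reduce the claim to the standard characterization of relative compactness in a $p$-Wasserstein space: regarding $S:=\Pc_p(E)$ (equipped with the metric $\mathcal{W}_{p,E}$) as the underlying Polish base space, a family $K\subset\Pc_p(S)$ is precompact in $(\Pc_p(S),\mathcal{W}_{p,S})$ if and only if (a) $K$ is tight for the weak topology on $\Pc(S)$, and (b) the $p$-th moments of the elements of $K$, measured from a fixed base point, are uniformly integrable; a uniform bound on $p'$-th moments with $p'>p$ is a classical sufficient condition for (b). I fix the reference point $\nu_0:=\delta_{x_0}\in S$, so that $\mathcal{W}_{p,E}(\mu,\delta_{x_0})^p=\int_E\ell(x,x_0)^p\mu(\d x)$ for every $\mu\in\Pc_p(E)$, and verify (a) and (b) separately from the two hypotheses.

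For (b), I would first use Jensen's inequality with the convex map $t\mapsto t^{p'/p}$ (recall $p'/p>1$) to obtain, for each $\mu\in\Pc_p(E)$,
$$\mathcal{W}_{p,E}(\mu,\delta_{x_0})^{p'}=\left(\int_E\ell(x,x_0)^p\mu(\d x)\right)^{p'/p}\le\int_E\ell(x,x_0)^{p'}\mu(\d x).$$
Integrating against $P\in K$ and applying Fubini together with the definition of the mean measure yields
$$\int_S\mathcal{W}_{p,E}(\mu,\delta_{x_0})^{p'}P(\d\mu)\le\int_E\ell(x,x_0)^{p'}\,{\tt m}P(\d x),$$
whose right-hand side is bounded uniformly over $P\in K$ by hypothesis \eqref{Lp'_boundedness}. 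Since $p'>p$, this uniform bound on the $p'$-th moments at the level of $\Pc_p(S)$ furnishes exactly the uniform integrability required in (b).

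For (a), I would build, for each $\epsilon>0$, a single $\mathcal{W}_{p,E}$-compact set $\mathcal{K}\subset S$ carrying all but $\epsilon$ of the mass of every $P\in K$. Using the tightness of $\{{\tt m}P\}_{P\in K}$ I can choose compacts $K_j\subset E$ with $\sup_{P\in K}{\tt m}P(E\setminus K_j)$ arbitrarily small, set $\eta_j:=2^{-j}$, and define
$$\mathcal{K}:=\left\{\mu\in\Pc_p(E):\ \int_E\ell(x,x_0)^{p'}\mu(\d x)\le R,\ \ \mu(E\setminus K_j)\le\eta_j\ \ \forall j\ge1\right\}.$$
The constraints $\mu(E\setminus K_j)\le\eta_j\downarrow 0$ make $\mathcal{K}$ tight, while the uniform $p'$-moment bound $R$ (with $p'>p$) makes its $p$-th moments uniformly integrable, so the closure of $\mathcal{K}$ is compact in $(\Pc_p(E),\mathcal{W}_{p,E})$. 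Two applications of Markov's inequality at the level of $P$, namely
$$P\left(\int_E\ell(x,x_0)^{p'}\mu(\d x)>R\right)\le\frac1R\int_E\ell(x,x_0)^{p'}\,{\tt m}P(\d x),\qquad P\big(\mu(E\setminus K_j)>\eta_j\big)\le\frac{{\tt m}P(E\setminus K_j)}{\eta_j},$$
then bound $P(S\setminus\mathcal{K})$ by $\epsilon$ uniformly in $P\in K$ upon choosing $R$ large and the $K_j$ so that $\sum_j\eta_j^{-1}\sup_{P\in K}{\tt m}P(E\setminus K_j)$ is small. This is precisely tightness of $K$, and by Prokhorov's theorem $K$ is relatively compact in the weak topology of $\Pc(S)$.

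Finally, combining the weak relative compactness from (a) with the uniform $p'$-moment control from (b) and invoking the $\mathcal{W}_p$ compactness criterion recalled at the outset yields the precompactness of $K$ in $\Pc_p(\Pc_p(E))$. The main obstacle is step (a): one must produce a set that is compact in the \emph{strong} $\mathcal{W}_{p,E}$ topology on $\Pc_p(E)$ rather than merely weakly compact, which forces the simultaneous use of both hypotheses — the tightness of the mean measures controls the tightness part, while the higher-moment bound controls the uniform integrability part — and requires transferring these controls from the mean-measure level to the measure-on-measures level through Markov's inequality.
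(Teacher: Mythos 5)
Your argument is correct. Note, however, that the paper does not prove this lemma at all: it is quoted verbatim as Proposition A.2 of Carmona, Delarue and Lacker \cite{Carmona}, so there is no in-paper proof to compare against. What you have written is essentially the standard proof of that criterion (and is in the spirit of the argument in the cited reference): reduce precompactness in $\Pc_p(\Pc_p(E))$ to tightness plus uniform integrability of $p$-th moments over the base space $S=\Pc_p(E)$, then transfer both properties from the mean-measure hypotheses. The two transfer steps are exactly right --- Jensen with $t\mapsto t^{p'/p}$ plus Fubini turns the $p'$-moment bound on ${\tt m}P$ into a uniform $p'$-moment bound on $P$ relative to $\delta_{x_0}$ (which, since $p'>p$, gives the required uniform integrability), and the two Markov estimates turn tightness of $\{{\tt m}P\}$ together with the moment bound into tightness of $K$ over $S$, via a set $\mathcal{K}$ that is genuinely $\mathcal{W}_{p,E}$-compact rather than merely weakly compact. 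Your closing remark correctly identifies the one place where care is needed: the compact exhaustion of $S$ must be compact for the Wasserstein topology, which is why both hypotheses enter simultaneously in the construction of $\mathcal{K}$. The only cosmetic caveat is measurability of the sets $\{\mu:\int_E\ell(x,x_0)^{p'}\mu(\d x)>R\}$ and $\{\mu:\mu(E\setminus K_j)>\eta_j\}$, which follows from lower semicontinuity of the corresponding maps and is routine.
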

	
	Then, we have
	\begin{lemma}\label{precompactness}
		It holds that  $(P^N\circ (\boldsymbol{\delta}^N)^{-1})_{N\geq1}$ is precompact in $\Pc_2(\Pc_2(\Omega))$.
	\end{lemma}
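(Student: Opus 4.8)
The plan is to apply the precompactness criterion of \lemref{precompactness_criterion} with $E = \Omega$, $p = 2$ and the measures $P^N \circ (\boldsymbol{\delta}^N)^{-1} \in \Pc_2(\Pc_2(\Omega))$. The key observation is that the mean measure associated with $P^N \circ (\boldsymbol{\delta}^N)^{-1}$ is exactly the symmetrized one-particle distribution: by definition of $\boldsymbol{\delta}^N = \frac1N\sum_{i=1}^N \delta_{(X^i,\Lambda^i,W^i,A^i)}$, for any Borel set $C \subset \Omega$ we have
\begin{align*}
{\tt m}\bigl(P^N\circ(\boldsymbol{\delta}^N)^{-1}\bigr)(C) = \E^{P^N}\left[\frac1N\sum_{i=1}^N \mathbf{1}_C\bigl((X^i,\Lambda^i,W^i,A^i)\bigr)\right] = \frac1N\sum_{i=1}^N P^N\circ(X^i,\Lambda^i,W^i,A^i)^{-1}.
\end{align*}
So the criterion reduces the problem to two things: verifying the tightness of this averaged one-particle law in $\Pc(\Omega)$, and verifying the uniform $L^{p'}$-bound in \equref{Lp'_boundedness} for some $p' > 2$.

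For tightness, I would treat the four coordinates separately. The $W^i$-marginals all equal the Wiener measure $\mathcal{W}$ and the $A^i$-marginals all equal the fixed law of $A$ under $\hat P$ (by condition (iii) of \defref{relaxed_strategy}), so their contributions are trivially tight; the relaxed-control marginal lives in the compact space $\mathcal{Q}$, hence is automatically tight. The only substantive coordinate is $Y^i$ (from which $X^i = \Gamma(A^i, Y^i)$ is recovered continuously). Here I would invoke the moment estimate in \lemref{moment_p_N}, which gives $\sup_N \max_{i} \E^{P^N}[\sup_{t\in[0,T]}|Y^i_t|^p] \le C$ together with the Kolmogorov-type increment bound $\E^{P^N}[|Y^i_t - Y^i_s|^p] \le C|t-s|^{1+p/2}$ established exactly as in the proof of \lemref{compactness} (using the linear-growth bound (B3) of \lemref{extension_ass} and the martingale-measure representation). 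These bounds are uniform in $i$ and $N$, so they pass to the average, and Kolmogorov's tightness criterion yields tightness of the $Y$-marginal in $\C^Y$.

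The $L^{p'}$-bound in \equref{Lp'_boundedness} is then immediate from the same moment estimate: taking $p' = p > 2$ and a reference point $x_0 = (0,q_0,0,a_0) \in \Omega$, the distance $\ell(\cdot,x_0)^{p}$ is controlled by $\sup_t|Y_t|^p + \sup_t|A_t|^p$ plus bounded contributions from the compact $\mathcal{Q}$-coordinate and the fixed Wiener coordinate, and the uniform $p$-th moment bounds on $Y^i$ and the assumed $\E[\sup_t|A_t|^p]<\infty$ give a finite supremum over $N$. I expect the main obstacle to be the tightness bookkeeping for the $Y$-coordinate — specifically, making the Kolmogorov increment estimate genuinely uniform in both the particle index $i$ and the population size $N$ despite the empirical-measure feedback $\xi^N_t$ entering the coefficients. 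This is handled by noting that the linear-growth constant in (B3) is independent of the measure argument and that $M_2(\xi^N_t)$ is itself bounded by the averaged second moment of the $X^i$, closing the estimate self-consistently via Gronwall, exactly mirroring the single-agent argument in \lemref{moment_p} and \lemref{compactness}.
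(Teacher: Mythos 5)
Your proposal is correct and follows essentially the same route as the paper: apply the criterion of \lemref{precompactness_criterion}, identify the mean measure of $P^N\circ(\boldsymbol{\delta}^N)^{-1}$ as the averaged one-particle law, reduce tightness to the $Y$-coordinate via the compactness of $\mathcal{Q}$ and the fixed law of $(W^i,A^i)$, and obtain both tightness and the $L^{p'}$-bound from the uniform $p$-th moment estimate in \lemref{moment_p_N}. If anything, you are slightly more careful than the paper's written proof in spelling out the Kolmogorov increment bound needed to upgrade the moment estimate to tightness in $\C$.
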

	
	\begin{proof}
		Set $\Upsilon^N:=P^N\circ (\delta^N)^{-1}$ for $N\geq 1$. In light of \lemref{precompactness_criterion}, we first show that $({\tt m}\Upsilon^N)_{N\geq1}$ is tight. By construction,  we have ${\tt m}\Upsilon^N=\frac1N\sum_{i=1}^N\Law^{P^N}(Y^i,\Lambda^i,W^i,A^i)$. To show the tightness, we need to show that $({\tt m}\Upsilon^N\circ Y^{-1})_{N\geq1}=(\frac1N\sum_{i=1}^N\Law^{P^N}(Y^i))_{N\geq1}$ is tight due to the compactness of $\mathcal{Q}$ and the fact $\frac1N\sum_{i=1}^N\Law^{P^N}(W^i,A^i)=\Law^{\Pb}(W,A)$ (c.f. \defref{relaxed_strategy}).
		In view of \lemref{moment_p_N}, there exists a constant $C>0$ independent of $N$ such that
		\begin{align*}
			\E^{{\tt m}\Upsilon^N}\left[\sup_{t\in [0,T]}\left|Y_t\right|^p\right]=\frac1N\sum_{i=1}^N\E^{P^N}\left[\sup_{t\in [0,T]}\left|Y_t^i\right|^p\right]<+\infty,    
		\end{align*}
		which verifies \equref{Lp'_boundedness}. As a consequence of the above $L^{P'}$-boundedness, the set $({\tt m}\Upsilon^N\circ Y^{-1})_{N\geq1}$ is tight, and is thus precompact in $\Pc_2(\Pc_2(\Omega))$ thanks to \lemref{precompactness_criterion}.
	\end{proof}

	We next proceed to show the results in 
	\textbf{Step (ii)}.	Define the following mapping that
	\begin{align}\label{eq:mappingF}
\mathscr{T}:\Pc_2(\Omega)\to\Pc_2(\C\times\mathcal{Q})\times\Pc_2(\Omega),~P\mapsto (P\circ (X,\Lambda)^{-1},P), 
	\end{align}
	where we recall that $X=\Gamma(A,Y)$ and $(Y,\Lambda,W,A)$ are the coordinate processes on $\Omega$ introduced in \defref{relaxed_control}. Then, we have $P^N\circ (\boldsymbol{\xi}^N,\boldsymbol{\delta}^N)^{-1}=\left(P^N\circ(\boldsymbol{\delta}^N)^{-1}\right)\circ\mathscr{T}^{-1}$. As a result, $(P^N\circ (\boldsymbol{\xi}^N,\boldsymbol{\delta}^N)^{-1})_{N\geq1}$ is precompact in $\Pc_2(\Pc_2(\C\times\mathcal{Q})\times\Pc_2(\Omega))$. Moreover, the push-forward mapping of $\mathscr{T}$ establishes a one-to-one correspondence between the set of limit points of the sequence $(P^N\circ(\boldsymbol{\delta}^N)^{-1})_{N\geq1}$ and the set of limit points of the sequence $(P^N\circ(\boldsymbol{\xi}^N,\boldsymbol{\delta}^N)^{-1})_{N\geq1}$ due to the continuity of $\mathscr{T}$. More precisely, if $P^{k_N}\circ(\boldsymbol{\delta}^{k_N})$ converges to a limit point $\Upsilon\in\Pc_2(\Pc_2(\Omega))$ as $N\to\infty$, then $P^{k_N}\circ(\boldsymbol{\xi}^{k_N},\boldsymbol{\delta}^{k_N})^{-1}$ converges to $\Upsilon\circ\mathscr{T}^{-1}$ in $\Pc_2(\Pc_2(\C\times\mathcal{Q})\times\Pc_2(\Omega))$ as $N\to\infty$, and the converse follows immediately by simply restricting $\Pc_2(\Pc_2({\cal C}\times\mathcal{Q})\times\Pc_2(\Omega))$ to $\Pc_2(\Pc_2(\Omega))$.
	
	We next show the proof in {\bf Step (ii)}. To start with, we fix a limit point $\Xi\in\Pc_2(\Pc_2(\C\times\mathcal{Q})\times\Pc_2(\Omega))$ of the sequence $(\Xi^N=P^N\circ (\boldsymbol{\xi}^N,\boldsymbol{\delta}^N)^{-1})_{N\geq1}$ in $\Pc_2(\Pc_2(\C\times\mathcal{Q})\times\Pc_2(\Omega))$, and we may assume without loss of generality that $\Xi^N$ converges to $\Xi$ in $\Pc_2(\Pc_2(\C\times\mathcal{Q})\times\Pc_2(\Omega))$ as $N\to\infty$, relabeling if necessary.

	\begin{lemma}\label{pre_MFE_support}
		The limit point	$\Xi$ is supported on the set given by
		\begin{align}\label{pre_MFE}
			\mathscr{M}:=\{(\boldsymbol{\xi},P)\in\Pc_2(\C\times\mathcal{Q})\times\Pc_2(\Omega);~P\in R(\boldsymbol{\xi}),~P\circ(X,\Lambda)^{-1}=\boldsymbol{\xi}\}.
		\end{align}
	\end{lemma}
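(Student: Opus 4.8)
The plan is to show that $\Xi$ assigns full mass to $\mathscr{M}$ by verifying, for $\Xi$-almost every pair $(\boldsymbol{\xi},P)$, the consistency relation $\boldsymbol{\xi}=P\circ(X,\Lambda)^{-1}$ together with conditions (i)--(iv) of \defref{relaxed_control}. The consistency relation comes for free: since $\Xi=\Upsilon\circ\mathscr{T}^{-1}$ with $\mathscr{T}$ continuous (recall $X=\Gamma(A,Y)$ and $\Gamma$ is Lipschitz by \eqref{Lipschitz_Gamma}), the support of $\Xi$ lies in the closed set $\{(\boldsymbol{\xi},P):\boldsymbol{\xi}=P\circ(X,\Lambda)^{-1}\}$. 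For each remaining condition I would attach a nonnegative ``defect'' functional on $\Pc_2(\C\times\mathcal{Q})\times\Pc_2(\Omega)$ that vanishes precisely when the condition holds, and show its $\Xi$-expectation is $0$; together with an appropriate semicontinuity this forces the defect to vanish $\Xi$-a.s. The common device is that, under $\Xi^N=P^N\circ(\boldsymbol{\xi}^N,\boldsymbol{\delta}^N)^{-1}$, a functional $F(\boldsymbol{\xi},P)$ evaluated at the empirical pair reads $\E^{P^N}[F(\boldsymbol{\xi}^N,\boldsymbol{\delta}^N)]$, and since $\boldsymbol{\delta}^N=\frac1N\sum_i\delta_{(Y^i,\Lambda^i,W^i,A^i)}$ the inner $P$-integration becomes the empirical average over the $N$ particles.

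Conditions (i)--(iii) follow from the law of large numbers applied to the i.i.d.\ data of the $N$-player system. The identity $P\circ(W,A)^{-1}=\hat P$ is encoded by $G(\boldsymbol{\xi},P):=\mathcal{W}_{2,\C^W\times\tilde\Omega}(P\circ(W,A)^{-1},\hat P)\wedge1$, which is bounded and continuous in $P$; at the prelimit $\boldsymbol{\delta}^N\circ(W,A)^{-1}=\frac1N\sum_i\delta_{(W^i,A^i)}$ converges $P^N$-a.s.\ to $\hat P$ in $\mathcal{W}_2$ because $(W^i,A^i)_{i\ge1}$ are i.i.d.\ with law $\hat P$ (\defref{relaxed_strategy}(iii)) with a uniform $L^p$ bound, $p>2$. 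Weak convergence of $\Xi^N$ then gives $\E_\Xi[G]=\lim_N\E_{\Xi^N}[G]=0$, so $G=0$ $\Xi$-a.s. The identity $P\circ Y_0^{-1}=\Law^{\Pb}(\eta)$ is handled identically via $\frac1N\sum_i\delta_{\eta^i}\to\Law^{\Pb}(\eta)$, and $P(Y_0\ge A_0)=1$ follows by writing $\gamma(\boldsymbol{\xi},P):=P(Y_0\ge A_0)$, which is upper semicontinuous in $P$ (indicator of a closed set) and equals $1$ at every $\boldsymbol{\delta}^N$ since $Y_0^i=\eta^i\ge A_0^i$; the Portmanteau theorem yields $\E_\Xi[\gamma]\ge\limsup_N\E_{\Xi^N}[\gamma]=1$, hence $\gamma=1$ $\Xi$-a.s.

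The martingale condition (iv) is the crux. For fixed $s<t$, $\phi\in C_b^2(\R\times\R)$ and bounded continuous $\F_s$-measurable $h$, set $\Theta_{s,t,h,\phi}(\boldsymbol{\xi},P):=\E^P[({\tt M}^{\boldsymbol{\xi}}\phi(t)-{\tt M}^{\boldsymbol{\xi}}\phi(s))h]$. The uniform-in-$\omega$ convergence of the generator terms established in \lemref{closed} (see \eqref{uniform_convergence}) makes $(\boldsymbol{\xi},P)\mapsto\Theta_{s,t,h,\phi}$ continuous, so $|\Theta_{s,t,h,\phi}|$ is nonnegative and lower semicontinuous and, by Portmanteau, it suffices to prove $\lim_N\E^{P^N}[|\Theta_{s,t,h,\phi}(\boldsymbol{\xi}^N,\boldsymbol{\delta}^N)|]=0$. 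Evaluating at $\boldsymbol{\delta}^N$ gives $\Theta_{s,t,h,\phi}(\boldsymbol{\xi}^N,\boldsymbol{\delta}^N)=\frac1N\sum_{i=1}^N G_i^N$, where $G_i^N$ is $({\tt M}^{\boldsymbol{\xi}^N}\phi(t)-{\tt M}^{\boldsymbol{\xi}^N}\phi(s))h$ read along the $i$-th coordinate. Choosing in the $N$-player martingale problem of \defref{relaxed_strategy}(iv) a test function on $\R^N\times\R^N$ of the form $(\boldsymbol y,\boldsymbol w)\mapsto\phi(y_i,w_i)$ shows that $t\mapsto{\tt M}^{\boldsymbol{\xi}^N}\phi(t)$ along particle $i$ is itself a $(P^N,\Fb^N)$-martingale carrying the empirical mean field $\xi^N_s$; hence $\E^{P^N}[G_i^N]=0$ for every $i$.

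It then remains to control the variance $\E^{P^N}[(\tfrac1N\sum_iG_i^N)^2]=\frac1{N^2}\sum_{i,j}\E^{P^N}[G_i^NG_j^N]$. The decisive observation is that the coupling through $\boldsymbol{\xi}^N$ enters only the finite-variation compensator of each ${\tt M}^{\boldsymbol{\xi}^N}\phi$, so the martingale part of the $i$-th process is driven solely by the idiosyncratic noise $\mathcal{M}^i$ (equivalently $W^i$) of \lemref{moment_p_N}. As the driving martingale measures $(\mathcal{M}^i)_{i=1}^N$ are independent under $P^N$, the cross brackets of the $i$-th and $j$-th processes vanish for $i\ne j$, whence, conditioning on $\F_s$ and using that $h$ is $\F_s$-measurable, $\E^{P^N}[G_i^NG_j^N]=0$ for $i\ne j$. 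The diagonal terms are bounded uniformly in $i,N$ by the $C_b^2$-bound on $\phi$ together with the linear growth (B3) and the $L^p$ estimate of \lemref{moment_p_N}, so the variance is $O(1/N)$ and $\E^{P^N}[|\frac1N\sum_iG_i^N|]\le(C/N)^{1/2}\to0$. Running this over a countable determining family of $(s,t,h,\phi)$ yields (iv) $\Xi$-a.s., completing the proof. The main obstacle is exactly this passage from \emph{mean-zero} per-particle increments to the \emph{almost sure} martingale property of the limit; what rescues it is that the mean-field coupling is confined to the drift and compensator, leaving the quadratic covariation diagonal by independence of the idiosyncratic noises, so that no quantitative propagation of chaos estimate is required.
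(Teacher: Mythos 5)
Your proposal is correct and follows essentially the same route as the paper: the consistency condition via continuity of the map $P\mapsto(P\circ(X,\Lambda)^{-1},P)$, conditions (i)--(iii) via the law of large numbers and the Portmanteau theorem, and the martingale condition via the second-moment (variance) computation in which the cross terms vanish because the mean-field coupling enters only the compensator and the idiosyncratic drivers are independent, leaving a diagonal sum of order $O(1/N)$. The paper's displayed identity $\E^{\Xi^N}[|\E^P[(\mathtt{M}^{\boldsymbol{\xi}}\phi(t)-\mathtt{M}^{\boldsymbol{\xi}}\phi(s))h]|^2]=\frac{1}{N^2}\E^{P^N}[\sum_i\int_s^t\int_U|\nabla\phi^{\T}\bar\sigma|^2\Lambda_r^i(\d u)\d r\,|h|^2]$ is exactly your variance bound, stated more tersely.
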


\begin{proof}
Let us define the following subsets of $\Pc_2(\C\times\mathcal{Q})\times\Pc_2(\Omega)$ that
\begin{align*}
\mathscr{M}_1&:=\left\{(\boldsymbol{\xi},P)\in\Pc_2(\C\times\mathcal{Q})\times\Pc_2(\Omega);~P\circ (X,\Lambda)^{-1}=\boldsymbol{\xi}\right\},\\
\mathscr{M}_2&:=\left\{(\boldsymbol{\xi},P)\in\Pc_2(\C\times\mathcal{Q})\times\Pc_2(\Omega);~P\circ Y_0^{-1}=\Law^{\Pb}(\eta)\right\},\\
\mathscr{M}_3&=\left\{(\boldsymbol{\xi},P)\in\Pc_2(\C\times\mathcal{Q})\times\Pc_2(\Omega);~P(Y_0\geq A_0)=1\right\},\\
\mathscr{M}_4&:=\left\{(\boldsymbol{\xi},P)\in\Pc_2(\C\times\mathcal{Q})\times\Pc_2(\Omega);~P\circ (W,A)^{-1}=\hat P\right\},\\
\mathscr{M}_5&:=\left\{(\boldsymbol{\xi},P)\in\Pc_2(\C\times\mathcal{Q})\times\Pc_2(\Omega);~(\mathtt{M}^{\boldsymbol{\xi}}\phi(t))_{t\in[0,T]}~\text{is a $(P,\Fb)$-martingale}~\text{for all}~\phi\in C_b^2(\R\times\R)\right\},
\end{align*}
where we recall that $\mathtt{M}^{\boldsymbol{\xi}}\phi(t)$ is given in \defref{relaxed_control}. Then, it follows from 
\defref{relaxed_control} that $\mathscr{M}=\bigcap_{i=1}^5\mathscr{M}_i$. Therefore, to prove the desired result, we only need to show $\Xi(\mathscr{M}_i)=1$ for all $i=1,\ldots,5$. Given the discussion below \lemref{precompactness}, it is evident that $\Xi(\mathscr{M}_1)=1$. We define the (continuous) restriction mapping $\mathscr{N}$ by \begin{align*}
\mathscr{N}:\Pc_2(\C\times\mathcal{Q})\times\Pc_2(\Omega)\to\Pc_2(\C^W\times\tilde{\Omega}),\quad (\boldsymbol{\xi},P)\mapsto P|_{C^W\times\tilde \Omega}=P\circ (W,A)^{-1}.
\end{align*}
Then, establishing $\Xi(\mathscr{M}_4)=1$ is equivalent to showing $\Xi\circ\mathscr{N}^{-1}=\delta_{\hat P}$. Observe that $(W^i,A^i)$ for $i=1,\ldots,N$ are i.i.d. under $P^N$ and
		\begin{align*}
			\Xi^N\circ \mathscr{N}^{-1}(\cdot)=P^N\left(\frac1N\sum_{i=1}^N\delta_{(W^i,A^i)}\in\cdot\right).    
		\end{align*}
		From the law of large numbers, it follows that $\Xi^N\circ\mathscr{N}^{-1}\to\delta_{\hat P}$ in $\Pc_2(\Pc_2(\C^W\times\tilde\Omega))$ as $N\to\infty$. On the other hand, we have $\Xi^N\to \Xi$ in $\Pc_2(\Pc_2(\C\times\mathcal{Q})\times\Pc_2(\Omega))$ as $N\to\infty$, which implies $\Xi^N\circ\mathscr{N}^{-1}\to\Xi\circ\mathscr{N}^{-1}$  in $\Pc_2(\Pc_2(\tilde\Omega))$ as $N\to\infty$. Consequently, we obtain $\Xi\circ \mathscr{N}^{-1}=\delta_{\hat P}$, and hence $\Xi(\mathscr{M}_4)=1$. By a similar argument, we can also conclude that $\Xi(\mathscr{M}_2)=1$. Note that, it holds that
		\begin{align*}
			\Xi^N(\mathscr{M}_3)=P^N\left(\boldsymbol{\delta}^N(Y_0\geq A_0)=1\right)=P^N\left(\bigcap_{i=1}^N\{Y^i_0\geq A^i_0\}\right)=1,
		\end{align*}
		where we have used the fact that $P^N(Y_0^i\geq A_0^i)=1$ for $i=1,\ldots,N$. By the closedness of $\mathscr{M}_3$, it follows from the Portmaneau theorem that $\Xi(\mathscr{M}_3)\geq\limsup_{N\to\infty}\Xi^N(\mathscr{M}_3)=1$. For the set $\mathscr{M}_5$, we first show that, for any $\phi\in C_b^2(\R\times\R)$, $0\leq s<t\leq T$ and $\F_s$-measurable bounded $\RV$ $h$, \begin{align}\label{pre_martingale}
			\Xi\left(\left\{(\boldsymbol{\xi},P);~\E^P\left[(\mathtt{M}^{\boldsymbol{\xi}}\phi(t)-\mathtt{M}^{\boldsymbol{\xi}}\phi(s))h\right]=0\right\}\right)=1.
		\end{align}
We then have
{\small\begin{align*}
&\E^{\Xi^N}\left[\left|\E^P\left[\left(\mathtt{M}^{\boldsymbol{\xi}}\phi(t)-\mathtt{M}^{\boldsymbol{\xi}}\phi(s)\right)h\right]\right|^2\right]\nonumber\\
=&\E^{P^N}\left[\left|\frac1N\sum_{i=1}^N\left[\left(\phi(Y_t^i,W_t^i)-\phi(Y_s^i,W_s^i)-\int_s^t\int_U\bar{\mathbb{L}}\phi(r,X_r^i,Y_r^i,W_r^i,\xi_r^N,u)\Lambda_r^i(\d u)\d r\right)h(Y^i,\Lambda^i,A^i)\right]\right|^2\right]\\
=&\frac{1}{N^2}\E^{P^N}\left[\sum_{i=1}^N\int_s^t\int_U\left|\nabla\phi(Y_r^i,W_r^i)^{\T}\bar\sigma(r,X_r^i,\xi_r^N,u)\right|^2\Lambda_r^i(\d u)\d r\left|h(Y^i,\Lambda^i,A^i)\right|^2\right],
\end{align*}}which tends to $0$ as $N\to\infty$ in view of \lemref{moment_p_N} and growth condition on $\sigma$ imposed in \assref{ass1}. Thus, by the at most quadratic growth condition of $\left|\E^P\left[\left(\mathtt{M}^{\boldsymbol{\xi}}\phi(t)-\mathtt{M}^{\boldsymbol{\xi}}\phi(s)\right)h\right]\right|^2$ w.r.t. $(\boldsymbol{\xi},P)$ and the convergence $\Xi^N\to\Xi$ in $\Pc_2(\Pc_2(\C\times\mathcal{Q})\times\Pc_2(\Omega))$ as $N\to\infty$, one has
		\begin{align*}
			\E^{\Xi}\left[\left|\E^P\left[\left(\mathtt{M}^{\boldsymbol{\xi}}\phi(t)-\mathtt{M}^{\boldsymbol{\xi}}\phi(s)\right)h\right]\right|^2\right]=0.   
		\end{align*}
		This yields the desired \equref{pre_martingale}. Applying the above argument to a suitably dense countable set of $(s,t,,\phi,h)$, we upgrade \equref{pre_martingale} to $\Xi(\mathscr{M}_5)=1$, which completes the proof.
	\end{proof}
	
	Finally, we turn to the challenging part of the proof, i.e., the task in {\bf Step (iii)}, to establish the optimality. To this end, we first extend the canonical space introduced in Subsection \ref{MFE} by incorporating the canonical space $\Pc_2(\C\times\mathcal{Q})$ of $\boldsymbol{\xi}$.
	Equip $\Omega^1=\Pc_2(\C\times\mathcal{Q})$ with the $2$-Wasserstein metric $d_{\Omega^1}=\mathcal{W}_{2,\C\times\mathcal{Q}}$ and let $\F^1$ be the corresponding Borel $\sigma$-algebra. We also define the filtration $\Fb^1=(\F_t^1)_{t\in [0,T]}$ on $(\Omega^1,\F^1)$ by setting $\F_t^1=\sigma(\xi_s;~s\leq t)$ with $\xi_s$ being the $s$-marginal distribution of $\boldsymbol{\xi}\in\Pc_2(\C\times\mathcal{Q})$.
	
	Now, we set the product space $\bar{\Omega}=\Omega\times\Omega^1$ endowed with the product metric:
	\begin{align*}
		d_{\bar\Omega}(\bar{\omega}^1,\bar{\omega}^2)=d_{\Omega}(\omega^1,\omega^2)+d_{\Omega^1}(\boldsymbol{\xi}^1,\boldsymbol{\xi}^2),\quad\bar{\omega}^i=(\omega^i,\boldsymbol{\xi}^i)\in\bar{\Omega},~i=1,2.    
	\end{align*}
	Define the corresponding product $\sigma$-algebra as $\bar{\F}=\F\otimes\F^1$ and the product filtration as $\bar{\Fb}=(\bar{\F}_t)_{t\in [0,T]}$ with $\bar{\F}_t=\F_t\otimes\F_t^1$, respectively.
	
We next extend the definition of admissible relaxed control in \defref{relaxed_control}. More precisely, the coordinate mapping on $\bar{\Omega}$ is denoted by $((\bar{Y},\bar{\Lambda},\bar{W},\bar{A}),\bar{\boldsymbol{\xi}})$. This means that $\bar{Y}(\bar{\omega})=y$, $ \bar{\Lambda}(\bar{\omega})=q$, $\bar{W}(\bar{\omega})=w$, $\bar{A}(\bar{\omega})=\tilde{\omega}$ and $\bar{\boldsymbol{\xi}}(\bar{\omega})=\boldsymbol{\xi}$ for any $\bar{\omega}=(\omega,\boldsymbol{\xi})=((y,q,w,\tilde{\omega}),\boldsymbol{\xi})\in\bar{\Omega}$.
\begin{definition}\label{extended_relaxed_control}
Let $\Theta\in\Pc_2(\Omega^1)$. We call a probability measure $\bar{P}\in\Pc_2(\bar{\Omega})$ on $(\bar{\Omega},\bar{\F})$ an extended admissible relaxed control (denoted by $\bar{P}\in R_{\rm e}(\Theta)$) if it holds that {\rm(i)} $\bar{P}\circ\bar{Y}_0^{-1}=\Law^{\Pb}(\eta)$; {\rm(ii)} $\bar{P}(\bar{Y}_0\geq\bar{A}_0)=1$; {\rm(iii)}  $\bar{P}\circ(\bar{W},\bar{A})^{-1}=\hat{P}$; {\rm(iv)} $\bar{P}\circ\bar{\boldsymbol{\xi}}^{-1}=\Theta$; {\rm(v)} 
for all $\phi\in C_b^2(\R\times\R)$, the process
\begin{align*}
\bar{\mathtt{M}}^{\Theta}\phi(t):=\phi(\bar{Y}_t,\bar{W}_t)-\int_0^t\int_U\bar{\mathbb{L}}\phi(s,\bar{X}_s,\bar{Y}_s,\bar{W}_s,\bar{\xi}_s,u)\bar{\Lambda}_s(\d u)\d s,~~t\in[0,T]   
\end{align*}
is a $(\bar{P},\bar{\Fb})$-martingale. Here, the generator $\bar{\mathbb{L}}$ is defined in \defref{relaxed_control}, $\bar{\xi}_s$ denotes the $s$-marginal distribution of $\bar{\boldsymbol{\xi}}$ and $\bar{X}(\bar{\omega})=\Gamma(\bar{A}(\bar{\omega}),\bar{Y}(\bar{\omega}))$ for $\bar{\omega}\in\bar{\Omega}$. Furthermore, if there exists an $\bar{\Fb}$-progressively measurable $U$-valued process $\bar{\alpha}=(\bar{\alpha}_t)_{t\in [0,T]}$ on $\bar{\Omega}$ such that $\bar{P}(\bar{\Lambda}_t(\d u)\d t=\delta_{\bar{\alpha}_t}(\d u)\d t)=1$, we say that $\bar{P}$ corresponds to an extended strict control ($\bar{\alpha}$) or we call it an extended strict control rule. Correspondingly, the set of all extended strict controls is denoted by $R_{\rm e}^{\rm s}(\boldsymbol{\xi})$.
\end{definition}
	
We also have the following equivalent characterization of the set $R_{\rm e}(\Theta)$ for extended relaxed control rules. The proof of the following lemma is also standard and omitted.
\begin{lemma}\label{extended_moment_p}
Let	$\Theta\in\Pc_2(\Omega^1)$. Then $\bar P\in\mathcal{R}_{\rm e}(\Theta)$ iff there exists a filtered probability space $(\Omega',\F',\Fb'=(\F'_t)_{t\in[0,T]},P')$ supporting a $\Pc(U)$-valued $\Fb'$-progressively measurable process $\Lambda=(\Lambda_t)_{t\in [0,T]}$, an $\Omega^1$-valued $\RV$ $\boldsymbol{\xi}'$ with $P'\circ (\boldsymbol{\xi}')^{-1}=\Theta$, a scalar $\Fb'$-adapted process $Y^{\Lambda,\boldsymbol{\xi}'}=(Y_t^{\Lambda,\boldsymbol{\xi}'})_{t\in[0,T]}$, a standard scalar $\Fb'$-Brownian motion $W=(W_t)_{t\in [0,T]}$, an $\Fb'$-martingale measure ${\cal M}$ on $U\times[0,T]$ with intensity $\Lambda_t(\d u)\d t$ and a real-valued $\mathbb{F}'$-adapted continuous process $A=(A_t)_{t\in[0,T]}$ such that $\bar P=P'\circ (Y^{\Lambda,\boldsymbol{\xi}'},\Lambda,W,A,\boldsymbol{\xi}')^{-1}$, and it holds that {\rm(i)} $P'\circ (Y_0^{\Lambda',\boldsymbol{\xi}})^{-1}=\Law^{\mathbb{P}}(\eta)$; {\rm(ii)} $P'(Y_0^{\Lambda,\boldsymbol{\xi}'}\geq A_0)=1$; {\rm(iii)} the restriction of $P$ to $\C^W\times\tilde\Omega$, $P|_{\C^W\times\tilde\Omega}$, agrees with $\Pb\circ (W,A)^{-1}$, $\ie$ $P\circ (W,A)^{-1}=\Pb\circ (W,A)^{-1}=\hat P$ ; {\rm(iv)} the dynamics of state process obeys that, $P'$-a.s.
\begin{align*}
\d Y_t^{\Lambda,\boldsymbol{\xi}'}=\int_U\tb(t,X_t^{\Lambda,\boldsymbol{\xi}'},\xi'_t,u)\Lambda_t(\d u)\d t+\int_U\ts(t,X_t^{\Lambda,\boldsymbol{\xi}'},\xi'_t,u){\cal M}(\d u,\d t),~ X_t^{\Lambda,\boldsymbol{\xi}'}=\Gamma(A,Y^{\Lambda,\boldsymbol{\xi}'})_t,
\end{align*}
where $\xi_t'$ denotes the $t$-marginal distribution of $\boldsymbol{\xi}'$. Moreover, there is a constant $C>0$ depending on $(M,\Law^{\mathbb{P}}(\eta),T)$ such that $\E^{P'}[\sup_{t\in[0,T]}|Y_t^{\Lambda,\boldsymbol{\xi}'}|^p]\leq C$ with $M$ being stated in \assref{ass1}.
\end{lemma}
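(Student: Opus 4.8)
The plan is to treat this as the by-now standard equivalence between the extended controlled martingale problem in \defref{extended_relaxed_control} and its weak It\^o formulation driven by a martingale measure, running parallel to \lemref{moment_p}. The only genuinely new feature relative to \lemref{moment_p} is the extra coordinate $\boldsymbol{\xi}'$, whose law is frozen to $\Theta$ and which enters the dynamics \emph{solely} as an $\Fb'$-adapted coefficient through its marginal flow $\xi_t'$ (note that $\xi_t'$ is $\bar{\F}_t$-measurable on the canonical space since $\F_t^1=\sigma(\xi_s;s\le t)$); consequently the adaptedness bookkeeping is unchanged and the proof of \lemref{moment_p} transfers almost verbatim. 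I would organize the argument in the two implications plus the moment bound.

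For the sufficiency direction ($\Leftarrow$), I would start from the given filtered space carrying $(Y^{\Lambda,\boldsymbol{\xi}'},\Lambda,W,A,\boldsymbol{\xi}')$ together with the stated SDE and set $\bar{P}:=P'\circ(Y^{\Lambda,\boldsymbol{\xi}'},\Lambda,W,A,\boldsymbol{\xi}')^{-1}$. Items (i)--(iii) of \defref{extended_relaxed_control} are immediate pushforwards of (i)--(iii) here, and (iv) $\bar{P}\circ\bar{\boldsymbol{\xi}}^{-1}=\Theta$ is exactly $P'\circ(\boldsymbol{\xi}')^{-1}=\Theta$. For the martingale property (v) I would apply It\^o's formula to $\phi(Y_t^{\Lambda,\boldsymbol{\xi}'},W_t)$ for $\phi\in C_b^2(\R\times\R)$: recalling $\bar b=(\tb,0)^{\T}$ and $\bar\sigma=(\ts,1)^{\T}$, the quadratic variations $\d\langle Y^{\Lambda,\boldsymbol{\xi}'}\rangle_t=\int_U\ts^2\Lambda_t(\d u)\d t$, $\d\langle W\rangle_t=\d t$ and $\d\langle Y^{\Lambda,\boldsymbol{\xi}'},W\rangle_t=\int_U\ts\Lambda_t(\d u)\d t$ make the finite-variation part of $\phi(Y_t^{\Lambda,\boldsymbol{\xi}'},W_t)$ coincide exactly with $\int_0^t\int_U\bar{\mathbb{L}}\phi\,\Lambda_s(\d u)\d s$, while the remainder is a stochastic integral against $W$ and $\mathcal{M}$. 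By the moment bound below this integral is square-integrable, hence a true martingale, so $\bar{\mathtt{M}}^{\Theta}\phi$ is a $(\bar{P},\bar{\Fb})$-martingale and $\bar{P}\in R_{\rm e}(\Theta)$.

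For the necessity direction ($\Rightarrow$) I would work on the canonical space $(\bar{\Omega},\bar{\F},\bar{\Fb},\bar{P})$ and read off the semimartingale structure of the coordinates by testing condition (v) against $\phi(y,w)=y,\,w,\,y^2,\,w^2,\,yw$. This identifies $\bar{Y}_t-\int_0^t\int_U\tb\,\bar{\Lambda}_s(\d u)\d s$ as a continuous martingale $N$ with $\langle N\rangle_t=\int_0^t\int_U\ts^2\bar{\Lambda}_s(\d u)\d s$, shows that $\bar{W}$ is a continuous martingale with $\langle\bar{W}\rangle_t=t$ (hence Brownian, consistent with (iii)), and gives $\langle N,\bar{W}\rangle_t=\int_0^t\int_U\ts\,\bar{\Lambda}_s(\d u)\d s$. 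These bracket relations are precisely those of a martingale-measure representation, so I would invoke El Karoui et al. \cite{Karoui1} (as in Lacker \cite{Lacker}) to construct, on a possibly enlarged space, a martingale measure $\mathcal{M}$ with intensity $\bar{\Lambda}_t(\d u)\d t$ such that $\bar{W}_t=\int_0^t\int_U\mathcal{M}(\d u,\d s)$ and $N_t=\int_0^t\int_U\ts\,\mathcal{M}(\d u,\d s)$; setting $\boldsymbol{\xi}'=\bar{\boldsymbol{\xi}}$ and $\bar{X}=\Gamma(\bar{A},\bar{Y})$ then yields the stated SDE with (i)--(iii) inherited from \defref{extended_relaxed_control}.

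The moment estimate follows the route of \lemref{moment_p}: applying the Burkholder--Davis--Gundy inequality to the martingale-measure integral, the linear growth (B3) of \lemref{extension_ass} together with the Lipschitz bound \equref{Lipschitz_Gamma} for the Skorokhod map (which controls $\|X^{\Lambda,\boldsymbol{\xi}'}\|_{\infty}$ by $\|Y^{\Lambda,\boldsymbol{\xi}'}\|_{\infty}$ and $\|A\|_{\infty}$), the integrability of $\eta$ and of $\sup_{t\in[0,T]}|A_t|$, and Gronwall's inequality, yield $\E^{P'}[\sup_{t\in[0,T]}|Y_t^{\Lambda,\boldsymbol{\xi}'}|^p]\le C$ with $C$ depending only on $M,\Law^{\Pb}(\eta),T$ as claimed. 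The one genuinely delicate point is the martingale-measure construction in the necessity direction: a single Brownian integral does \emph{not} suffice because $\bar{\Lambda}$ is measure-valued, so in general $\int_U\ts^2\bar{\Lambda}_s(\d u)\neq(\int_U\ts\,\bar{\Lambda}_s(\d u))^2$, meaning the martingale part of $\bar{Y}$ carries strictly more quadratic variation than the averaged-volatility integral against $\bar{W}$; this extra randomness is exactly what the martingale measure encodes after enlarging the probability space. The randomness and adaptedness of $\xi_t'$ require no further effort, as $\boldsymbol{\xi}'$ is carried unchanged through the representation as an adapted parameter.
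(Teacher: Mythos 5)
The paper omits this proof, stating only that it is classical, and your proposal reconstructs exactly the standard argument the paper is invoking: the equivalence between the controlled martingale problem and the martingale-measure SDE via El Karoui--M\'el\'eard representation (testing against coordinate functions and matching brackets), with the moment bound from BDG, the Lipschitz estimate \equref{Lipschitz_Gamma} for $\Gamma$, and Gronwall. Your observation that the extra coordinate $\boldsymbol{\xi}'$ enters only as an adapted frozen parameter, so the proof of \lemref{moment_p} transfers verbatim, is precisely why the paper treats the result as classical; the proposal is correct and consistent with the paper's intended route.
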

	
For any $\bar{P}\in \Pc_2(\bar{\Omega})$ such that $\bar{P}|_{\Omega^1}=\Theta$,  it can be expressed in the following disintegration form:
\begin{align}\label{disintegration}
\bar{P}(\d\omega,\d\boldsymbol{\xi})=P^{\boldsymbol{\xi}}(\d\omega)\Theta(\d\boldsymbol{\xi}),
\end{align}
where $P^{\boldsymbol{\xi}}\in\Pc_2(\Omega)$ is measurable with respect to $\boldsymbol{\xi}$. 
\begin{lemma}\label{relation}
Let $\bar{P}\in\Pc_2(\bar{\Omega})$ admit the disintegration form \equref{disintegration}. If for $\Theta$-a.s. $\boldsymbol{\xi}\in\Omega^1$, the probability kernel $P^{\boldsymbol{\xi}}\in{\cal P}_2(\Omega)$ given in \equref{disintegration} belongs to $R(\boldsymbol{\xi})$, and hence $\bar{P}\in R_{\rm e}(\Theta)$.
\end{lemma}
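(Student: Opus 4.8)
The plan is to verify that $\bar P$ satisfies all five defining conditions (i)--(v) of \defref{extended_relaxed_control}. Conditions (i)--(iv) are routine consequences of the disintegration \equref{disintegration}: for any bounded measurable test function, integrating first in $\omega$ against $P^{\boldsymbol{\xi}}$ and then in $\boldsymbol{\xi}$ against $\Theta$, and invoking the corresponding property of $P^{\boldsymbol{\xi}}\in R(\boldsymbol{\xi})$ for $\Theta$-$\as$ $\boldsymbol{\xi}$, yields the claim. Indeed, condition (iv), $\bar P\circ\bar{\boldsymbol{\xi}}^{-1}=\Theta$, is immediate from \equref{disintegration} since each $P^{\boldsymbol{\xi}}$ is a probability measure; conditions (iii), $\bar P\circ(\bar W,\bar A)^{-1}=\hat P$, and (i), $\bar P\circ\bar Y_0^{-1}=\Law^{\Pb}(\eta)$, follow because $P^{\boldsymbol{\xi}}\circ(W,A)^{-1}=\hat P$ and $P^{\boldsymbol{\xi}}\circ Y_0^{-1}=\Law^{\Pb}(\eta)$ hold for $\Theta$-$\as$ $\boldsymbol{\xi}$; and (ii) follows from $\bar P(\bar Y_0\geq\bar A_0)=\int_{\Omega^1}P^{\boldsymbol{\xi}}(Y_0\geq A_0)\,\Theta(\d\boldsymbol{\xi})=1$.

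The only substantial point is the martingale property (v). The starting observation is that, for $\bar\omega=(\omega,\boldsymbol{\xi})\in\bar\Omega$, the extended process coincides with the original one evaluated at the frozen measure argument, i.e.
\begin{align*}
\bar{\mathtt{M}}^{\Theta}\phi(t)(\omega,\boldsymbol{\xi})={\tt M}^{\boldsymbol{\xi}}\phi(t)(\omega),\quad t\in[0,T].
\end{align*}
This holds because $\bar Y,\bar W,\bar A,\bar\Lambda$ depend only on the $\Omega$-coordinate $\omega$, because $\bar X=\Gamma(\bar A,\bar Y)$ therefore equals $X=\Gamma(A,Y)$, and because $\bar\xi_s$ is precisely the $s$-marginal $\xi_s$ of the $\Omega^1$-coordinate $\boldsymbol{\xi}$. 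I would first record that $t\mapsto\bar{\mathtt{M}}^{\Theta}\phi(t)$ is $\bar{\Fb}$-adapted (the integrand at time $s$ depends on $\bar X_s,\bar Y_s,\bar W_s$, which are $\F_s$-measurable since $\Gamma$ is non-anticipating, on $\bar\Lambda_s$, which is $\F_s^{\mathcal{Q}}$-measurable, and on $\bar\xi_s$, which is $\F_s^1$-measurable) and $\bar P$-integrable (using that $\phi\in C_b^2$, the growth bound on $(\bar b,\bar\sigma)$ from \lemref{extension_ass}, and the moment estimate of \lemref{extended_moment_p} together with $\bar P\in\Pc_2(\bar\Omega)$).

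To verify the martingale property, fix $0\leq s\leq t\leq T$. Since $\bar{\F}_s=\F_s\otimes\F_s^1$, a monotone class argument reduces the verification to test functions of product form $H(\bar\omega)=h(\omega)k(\boldsymbol{\xi})$ with $h$ bounded $\F_s$-measurable and $k$ bounded $\F_s^1$-measurable. Using the identity above, the disintegration \equref{disintegration}, and Fubini's theorem,
\begin{align*}
\E^{\bar P}\left[\left(\bar{\mathtt{M}}^{\Theta}\phi(t)-\bar{\mathtt{M}}^{\Theta}\phi(s)\right)H\right]=\int_{\Omega^1}k(\boldsymbol{\xi})\,\E^{P^{\boldsymbol{\xi}}}\left[\left({\tt M}^{\boldsymbol{\xi}}\phi(t)-{\tt M}^{\boldsymbol{\xi}}\phi(s)\right)h\right]\Theta(\d\boldsymbol{\xi}).
\end{align*}
For $\Theta$-$\as$ $\boldsymbol{\xi}$ we have $P^{\boldsymbol{\xi}}\in R(\boldsymbol{\xi})$, so $({\tt M}^{\boldsymbol{\xi}}\phi(t))_{t\in[0,T]}$ is a $(P^{\boldsymbol{\xi}},\Fb)$-martingale and, $h$ being bounded $\F_s$-measurable, the inner expectation vanishes; hence the whole integral is zero, which yields (v). I expect the main obstacle to lie exactly here: justifying the reduction to product-form test functions generating $\bar\F_s$ and the application of Fubini, together with the measurability of the map $\boldsymbol{\xi}\mapsto P^{\boldsymbol{\xi}}$ (guaranteed by the disintegration) so that $\boldsymbol{\xi}\mapsto\E^{P^{\boldsymbol{\xi}}}[({\tt M}^{\boldsymbol{\xi}}\phi(t)-{\tt M}^{\boldsymbol{\xi}}\phi(s))h]$ is a well-defined measurable function and the $\Theta$-null set of $\boldsymbol{\xi}$ with $P^{\boldsymbol{\xi}}\notin R(\boldsymbol{\xi})$ is harmless. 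Collecting (i)--(v) then shows $\bar P\in R_{\rm e}(\Theta)$.
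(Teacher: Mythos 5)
Your proof is correct and follows essentially the same route as the paper: verify (i)--(iv) directly from the disintegration, then obtain the martingale property (v) by combining the identity $\bar{\mathtt{M}}^{\Theta}\phi(t)(\omega,\boldsymbol{\xi})={\tt M}^{\boldsymbol{\xi}}\phi(t)(\omega)$ with Fubini and the $(P^{\boldsymbol{\xi}},\Fb)$-martingale property of ${\tt M}^{\boldsymbol{\xi}}\phi$ for $\Theta$-a.e.\ $\boldsymbol{\xi}$. The only cosmetic difference is that the paper works with an arbitrary bounded $\bar{\F}_s$-measurable test variable $\bar h$ and uses that its $\boldsymbol{\xi}$-sections $\bar h(\cdot,\boldsymbol{\xi})$ are $\F_s$-measurable, whereas you reduce to product-form test functions via a monotone class argument; both are valid.
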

	
\begin{proof}
By construction, one can easily verify that (i)-(iv) of \defref{extended_relaxed_control} holds for $\bar P$. It is left to prove the martingale property for $\bar{P}$. Note that, for any $\phi\in C_b^2(\R\times\R)$, $0\leq s<t\leq T$ and $\bar{\F}_s$-measurable bounded $\RV$ $\bar{h}$, it follows from \defref{extended_relaxed_control}-(v) that
\begin{align}\label{equivalence_martingale}
0&=\int_{\Omega^1}\E^{P^{\boldsymbol{\xi}}}\left[\left(\mathtt{M}^{\boldsymbol{\xi}}\phi(t)-\mathtt{M}^{\boldsymbol{\xi}}\phi(s))\bar{h}(\cdot,\boldsymbol{\xi})\right)\right]\Theta(\d\boldsymbol{\xi})\nonumber\\
&=\int_{\Omega^1}\left(\int_{\Omega}\Big(\phi(Y_t(\omega),W_t(\omega))-\phi(Y_s(\omega),W_s(\omega))\right.\nonumber\\
&\quad-\left.\int_s^t\int_U\bar{\mathbb{L}}\phi(r,X_r(\omega),Y_r(\omega),W_r(\omega),\boldsymbol{\xi}_r,u)\Lambda_r(\omega,\d u)\d r\Big)\bar{h}(\omega,\boldsymbol{\xi})P^{\boldsymbol{\xi}}(\d\omega)\right)\Theta(\d\boldsymbol{\xi})\nonumber\\
&=\int_{\Omega^1}\int_{\Omega}\Big(\phi(\bar{Y}_t(\bar{\omega}),\bar{W}_t(\bar\omega))-\phi(\bar{Y}_s(\bar{\omega}),\bar{Y}_s(\bar\omega))\nonumber\\
&\quad-\int_s^t\int_U\bar{\mathbb{L}}\phi(r,\bar{X}_r(\bar{\omega}),\bar{Y}_r(\bar\omega),\bar{W}_r(\bar\omega),\bar{\boldsymbol{\xi}}_r(\bar{\omega}),u)\bar{\Lambda}_r(\bar{\omega},\d u)\d r\Big)\bar{h}(\bar{\omega})P^{\boldsymbol{\xi}}(\d\omega)\Theta(\d\boldsymbol{\xi})\nonumber\\
&=\E^{\bar{P}}\left[\left(\bar{\mathtt{M}}^{\Theta}\phi(t)-\bar{\mathtt{M}}^{\Theta}\phi(s)\right)h\right],
\end{align}
where in the first equality, we have exploited the fact that $\bar{h}(\cdot,\boldsymbol{\xi})$ is $\F_s$-measurable for every $\boldsymbol{\xi}\in\Omega^1$ and that $\mathtt{M}^{\boldsymbol{\xi}}\phi$ is a $(P^{\boldsymbol{\xi}},\Fb)$-martingale for $\Theta$-$\as$ $\boldsymbol{\xi}$. Therefore, we can conclude that $\bar{P}\in R_{\rm e}(\Theta)$ after validating (i)-(v) of \defref{extended_relaxed_control}. 
\end{proof}
	
For $\Theta\in\Pc_2(\Omega^1)$ and $\bar{P}\in R_{\rm e}(\Theta)$, we also define the cost functional $\bar{\mathcal{J}}(\bar{P},\Theta)$ by
\begin{align}\label{extended_cost}
\bar{\mathcal{J}}(\bar{P},\Theta):=\E^{\bar{P}}\left[\Delta(\bar{\boldsymbol{\xi}})\right]
\end{align}
with $\Delta(\cdot)$ being defined in \equref{costDel}.
	The following lemma reveals the relationship between the cost functionals $\mathcal{J}(P,\boldsymbol{\xi})$ and $\bar{\mathcal{J}}(\bar{P},\Theta)$.
	\begin{lemma}\label{relation_value}
		For $\Theta\in\Pc_2(\Omega^1)$, it holds that
		\begin{align*}
			\inf_{\bar{P}\in R_{\rm e}(\Theta)}\bar{\mathcal{J}}(\bar{P},\Theta)\leq\int_{\Omega^1}\inf_{P\in R(\boldsymbol{\xi})}\mathcal{J}(P,\boldsymbol{\xi})\Theta(\d\boldsymbol{\xi}).    
		\end{align*}
	\end{lemma}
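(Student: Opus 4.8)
The plan is to realize the right-hand side as the cost of a single \emph{extended} control rule obtained by gluing together, in a measurable way, optimal relaxed control rules for each fixed $\boldsymbol{\xi}$. Concretely, I would first produce, via measurable selection, a Borel map $\Omega^1\ni\boldsymbol{\xi}\mapsto P^{\boldsymbol{\xi}}\in\Pc_2(\Omega)$ with $P^{\boldsymbol{\xi}}\in R_{\rm opt}(\boldsymbol{\xi})$ for every $\boldsymbol{\xi}$, so that $\mathcal{J}(P^{\boldsymbol{\xi}},\boldsymbol{\xi})=\inf_{P\in R(\boldsymbol{\xi})}\mathcal{J}(P,\boldsymbol{\xi})$. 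By \lemref{compactness}, the correspondence $\boldsymbol{\xi}\mapsto R_{\rm opt}(\boldsymbol{\xi})$ takes nonempty compact (hence closed) values in the Polish space $\Pc_2(\Omega)$, and by \lemref{u.s.c.} it is upper semi-continuous. An upper semi-continuous, compact-valued correspondence into a metric space is weakly measurable, since the lower inverse of any open set is an $F_\sigma$ and hence Borel; the Kuratowski--Ryll-Nardzewski selection theorem (c.f. Aliprantis and Border \cite{Aliprantis}) then supplies the desired measurable selection $\boldsymbol{\xi}\mapsto P^{\boldsymbol{\xi}}$.

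Second, I would define the candidate extended rule $\bar{P}$ through the disintegration $\bar{P}(\d\omega,\d\boldsymbol{\xi}):=P^{\boldsymbol{\xi}}(\d\omega)\Theta(\d\boldsymbol{\xi})$ as in \equref{disintegration}. The measurability of $\boldsymbol{\xi}\mapsto P^{\boldsymbol{\xi}}$ guarantees that this defines a genuine probability measure on $(\bar\Omega,\bar\F)$ with $\bar{P}|_{\Omega^1}=\Theta$. Before invoking \lemref{relation}, I would verify $\bar{P}\in\Pc_2(\bar\Omega)$: the $\Omega^1$-component has finite second moment because $\Theta\in\Pc_2(\Omega^1)$, while for the $\Omega$-component the $L^p$-bound of \lemref{moment_p} controls $\E^{P^{\boldsymbol{\xi}}}[\sup_{t}|Y_t|^p]$ by a constant depending on $\boldsymbol{\xi}$ only through its moments, so that integrating against $\Theta$ keeps the total second moment finite. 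Since $P^{\boldsymbol{\xi}}\in R_{\rm opt}(\boldsymbol{\xi})\subset R(\boldsymbol{\xi})$ for $\Theta$-a.e.\ $\boldsymbol{\xi}$, \lemref{relation} then yields $\bar{P}\in R_{\rm e}(\Theta)$.

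Finally, using the disintegration together with Fubini's theorem, the extended cost factorizes, the key point being that on the fiber $\{\bar{\boldsymbol{\xi}}=\boldsymbol{\xi}\}$ the functional $\Delta(\bar{\boldsymbol{\xi}})$ reduces to $\Delta(\boldsymbol{\xi})$ evaluated under $P^{\boldsymbol{\xi}}$:
\begin{align*}
\bar{\mathcal{J}}(\bar{P},\Theta)=\E^{\bar{P}}[\Delta(\bar{\boldsymbol{\xi}})]=\int_{\Omega^1}\E^{P^{\boldsymbol{\xi}}}[\Delta(\boldsymbol{\xi})]\,\Theta(\d\boldsymbol{\xi})=\int_{\Omega^1}\mathcal{J}(P^{\boldsymbol{\xi}},\boldsymbol{\xi})\,\Theta(\d\boldsymbol{\xi})=\int_{\Omega^1}\inf_{P\in R(\boldsymbol{\xi})}\mathcal{J}(P,\boldsymbol{\xi})\,\Theta(\d\boldsymbol{\xi}).
\end{align*}
Since $\bar{P}\in R_{\rm e}(\Theta)$, taking the infimum over $R_{\rm e}(\Theta)$ on the left gives the claimed inequality. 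The main obstacle is the measurable selection step: one must justify the Borel measurability of the optimal-rule correspondence and confirm that the glued measure lands in $\Pc_2(\bar\Omega)$, after which the factorization is a routine application of Fubini's theorem via \lemref{relation}. If selecting exact minimizers is delicate, an alternative is to choose a measurable $\epsilon$-optimal selection, exploiting the joint continuity of $\mathcal{J}$ from \lemref{u.s.c.} and the closed graph of $\textsf{R}$ from \lemref{closed}, and then let $\epsilon\downarrow0$.
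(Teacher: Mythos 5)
Your proposal is correct and follows essentially the same route as the paper: a measurable selection $\boldsymbol{\xi}\mapsto P^{\boldsymbol{\xi}}\in R_{\rm opt}(\boldsymbol{\xi})$ (the paper obtains it from \lemref{u.s.c.} together with the Stroock--Varadhan measurable selection theorem rather than Kuratowski--Ryll-Nardzewski, but this is an interchangeable technical device), the glued measure $\bar{P}(\d\omega,\d\boldsymbol{\xi})=P^{\boldsymbol{\xi}}(\d\omega)\Theta(\d\boldsymbol{\xi})$, admissibility via \lemref{relation}, and the Fubini factorization of $\bar{\mathcal{J}}$. Your extra check that $\bar{P}\in\Pc_2(\bar{\Omega})$ is a sensible addition that the paper leaves implicit.
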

	
\begin{proof}
From \lemref{u.s.c.}, Proposition 17.11 in Aliprantis and Border \cite{Aliprantis} and Lemma 12.1.8 in Stroock and Varadhan \cite{Stroock}, it follows that the set-valued mapping $\mathsf{R}_{\rm opt}$ is Borel measurable. In view of Theorem 12.1.10 of \cite{Stroock}, there exists a measurable selection $\mathscr{J}:\Pc_2(\C\times\mathcal{Q})\to\Pc_2(\Omega)$ such that $\mathscr{J}(\boldsymbol{\xi})\in R_{\rm opt}(\boldsymbol{\xi})$ for all $\boldsymbol{\xi}\in\Pc_2(\C\times\mathcal{Q})$. Hence, we can define the kernel $P^{\boldsymbol{\xi}}$ by $P^{\boldsymbol{\xi}}=\mathscr{J}(\boldsymbol{\xi})$ for every $\boldsymbol{\xi}\in\Omega^1$, and then construct $\bar{P}$ by $\bar{P}(\d\omega,\d\boldsymbol{\xi})=P^{\boldsymbol{\xi}}(\d\omega)\Theta(\d\boldsymbol{\xi})$. Thus, in light of \lemref{relation}, we have $\bar{P}\in R_{\rm e}(\Theta)$. As a result, it holds that
\begin{align*}
\inf_{\bar{P}\in R_{\rm e}(\Theta)}\bar{\mathcal{J}}(\bar{P},\Theta)\leq\bar{\mathcal{J}}(\bar{P},\Theta)=\int_{\Omega^1}\mathcal{J}(P^{\boldsymbol{\xi}},\boldsymbol{\xi})\Theta(\d\boldsymbol{\xi})=\int_{\Omega^1}\inf_{P\in R(\boldsymbol{\xi})}\mathcal{J}(P,\boldsymbol{\xi})\Theta(\d\boldsymbol{\xi}),
\end{align*}
where we  used the fact that $P^{\boldsymbol{\xi}}=\mathscr{J}(\boldsymbol{\xi})\in R_{\rm opt}(\boldsymbol{\xi})$. Thus, the proof is complete.
\end{proof}
	
Now, we are at the position to give a proof of \thmref{propagation}.
\begin{proof}[Proof of \thmref{propagation}]\label{proof_final}
We are left to show the optimality, i.e.,
\begin{align}\label{optimality}
\Xi\left(\left\{(\boldsymbol{\xi},P)\in{\cal P}_2(\C\times{\cal Q})\times{\cal P}_2(\Omega);~\mathcal{J}(P,\boldsymbol{\xi})=\inf_{Q\in R(\boldsymbol{\xi})}\mathcal{J}(Q,\boldsymbol{\xi})\right\}\right)=1.
\end{align}
We will prove \equref{optimality} in the following equivalent form given by
\begin{align}\label{optimality1}
\E^{\Xi}\left[\mathcal{J}(P,\boldsymbol{\xi})-\inf_{Q\in R(\boldsymbol{\xi})}\mathcal{J}(Q,\boldsymbol{\xi})\right]=0.
\end{align}
Note that the equivalence follows from the fact that $\mathcal{J}(P,\boldsymbol{\xi})-\inf_{Q\in R(\boldsymbol{\xi})}\mathcal{J}(Q,\boldsymbol{\xi})\geq 0$. To proceed with, set $\Theta^N:=\Xi^N\circ\boldsymbol{\xi}^{-1}=P^N\circ(\boldsymbol{\xi}^N)^{-1}$. Then, we have from a similar chattering lemma argument in \cite{Djete}  and \lemref{relation_value} that
\begin{align}
\E^{\Xi^N}\left[\mathcal{J}(P,\boldsymbol{\xi})-\inf_{Q\in R(\boldsymbol{\xi})}\mathcal{J}(Q,\boldsymbol{\xi})\right]\leq \E^{\Xi^N}\left[\mathcal{J}(P,\boldsymbol{\xi})\right]-\inf_{\bar{Q}\in R_{\rm e}^{\rm s}(\Theta^N)}\bar{\mathcal{J}}(\bar{Q},\Theta^N).
\end{align}
Due to the at most quadratic growth of $\mathcal{J}(P,\boldsymbol{\xi})-\inf_{Q\in R(\boldsymbol{\xi})}\mathcal{J}(Q,\boldsymbol{\xi})$ in $(\boldsymbol{\xi},P)$, we only need to show that
\begin{align}\label{goal}
\liminf_{N\to\infty}\left\{\E^{\Xi^N}\left[\mathcal{J}(P,\boldsymbol{\xi})\right]-\inf_{\bar{Q}\in R_{\rm e}^{\rm s}(\Theta^N)}\bar{\mathcal{J}}(\bar{Q},\Theta^N)\right\}\leq 0.
\end{align}
In fact, if \equref{goal} holds, we can derive that
\begin{align*}
\E^{\Xi}\left[\mathcal{J}(P,\boldsymbol{\xi})-\inf_{Q\in R(\boldsymbol{\xi})}\mathcal{J}(Q,\boldsymbol{\xi})\right]&=\lim_{N\to\infty}	\E^{\Xi^N}\left[\mathcal{J}(P,\boldsymbol{\xi})-\inf_{Q\in R(\boldsymbol{\xi})}\mathcal{J}(Q,\boldsymbol{\xi})\right]\\
&\leq 	\liminf_{N\to\infty}\left\{\E^{\Xi^N}\left[\mathcal{J}(P,\boldsymbol{\xi})\right]-\inf_{\bar{Q}\in R_e^{\rm s}(\Theta^N)}\bar{\mathcal{J}}(\bar{Q},\Theta^N)\right\}\leq 0,
\end{align*}
which implies \equref{optimality1}. We next turn to the proof of the inequality \equref{goal}. By definition of $\Xi^{N}$ and $\Theta^N$, it holds that
\begin{align*}
\E^{\Xi^N}\left[\mathcal{J}(P,\boldsymbol{\xi})\right]-\inf_{\bar{Q}\in R_e^{\rm s}(\Theta^N)}\bar{\mathcal{J}}(\bar{Q},\Theta^N)=\E^{P^N}\left[\mathcal{J}(\boldsymbol{\delta}^N,\boldsymbol{\xi}^N)\right]-\inf_{\bar{Q}\in R_e^{\rm s}(\Theta^N)}\bar{\mathcal{J}}(\bar{Q},\Theta^N).
\end{align*}
Take $\bar{P}^N\in R_{\rm e}^{\rm s}(\Theta^N)$ such that
\begin{align*}
\E^{P^N}\left[\mathcal{J}(\boldsymbol{\delta}^N,\boldsymbol{\xi}^N)\right]-\inf_{\bar{Q}\in R_{\rm e}^{\rm s}(\Theta^N)}\bar{\mathcal{J}}(\bar{Q},\Theta^N)<\E^{P^N}\left[\mathcal{J}(\boldsymbol{\delta}^N,\boldsymbol{\xi}^N)\right]-\bar{\mathcal{J}}(\bar{P}^N,\Theta^N)+\frac1N.    
\end{align*}
Then, by \lemref{moment_p_N} and \lemref{extended_moment_p}, there exists  a filtered probability space $(\Omega',\F',\Fb',P')$ supporting $N+1$ $U$-valued $\Fb'$-progressively measurable processes $\alpha^j=(\alpha_t^j)_{t\in[0,T]}$ for $j=1,\ldots,N$, $\beta=(\beta_t)_{t\in[0,T]}$, $N$ i.i.d. $\R$-valued $\F_0'$-measurable $\RV$ $(\eta^j)_{j=1}^N$ with law $\Law^{\Pb}(\eta)$, independent scalar $\Fb'$-Brownian motions $W^j=(W_t^j)_{t\in[0,T]}$ for $j=1,\ldots,N$, and $N$ independent real-valued $\mathbb{F}'$-adapted continuous process $(A^j)_{j=1}^N=((A_t^j)_{t\in[0,T]})_{i=1}^N$ and $((\mathtt{Y}_t^i)_{t\in [0,T]})_{i=1}^N$, $((Y_t^i[\boldsymbol{\alpha}])_{t\in [0,T]})_{i=1}^N$ (strongly) solving the following SDE system:
\begin{align*}
\d\mathtt{Y}_t^i&=b(t,\mathtt{X}_t^i,\rho_t^N,\beta_t)\d t+\sigma(t,\mathtt{X}_t^i,\rho_t^N,\beta_t)\d W_t^i,~~\mathtt{Y}_0^i=\eta^i,\quad i=1,\ldots,N,\\
\d Y_t^i[\boldsymbol{\alpha}]&=b(t,X_t^i[\boldsymbol{\alpha}],\rho_t^N,\alpha^i_t)\d t+\sigma(t,X_t^i[\boldsymbol{\alpha}],\rho_t^N,\alpha^i_t)\d W_t^i,~~Y_0^i[\boldsymbol{\alpha}]=\eta^i,\quad i=1,\ldots,N
\end{align*}
with $X^i=\Gamma(Y^i,A^i)$, $X^i[\boldsymbol{\alpha}]=\Gamma(Y^i[\boldsymbol{\alpha}],A^i)$, $\boldsymbol{\rho}^N=\frac1N\sum_{i=1}^N\delta_{(X^i[\boldsymbol{\alpha}],\delta_{\alpha^i_t}(\d u)\d t)}$ and $\rho_t^N$ being the $t$-marginal of $\boldsymbol{\rho}^N$ such that
\begin{align*}
P^N=P'\circ\left((Y^i[\boldsymbol{\alpha}],\delta_{\alpha^i_t}(\d u)\d t,W^i,A^i)_{i=1}^N\right)^{-1},\quad \bar{P}^N=P'\circ \left({\tt Y}^1,\delta_{\beta_t}(\d u)\d t,W^1,A^1,\mathscr{R}(\boldsymbol{\rho}^N)\right)^{-1}.    
\end{align*}
It can be easily verified that $\bar{P}^N=P'\circ (Y^i,\delta_{\beta_t}(\d u)\d t,W^i,A^i,\mathscr{R}(\boldsymbol{\rho}^N))^{-1}$ for $i=1,\ldots,N$. Let $(Y_t^j[\boldsymbol{\alpha}^{-i},\beta])_{i,j=1}^N$ for $t\in[0,T]$ solve the following SDE systems, for $i=1,\ldots,N$,
\begin{align*}
\d Y_t^i[\boldsymbol{\alpha}^{-i},\beta]&=b(t,X_t^i[\boldsymbol{\alpha}^{-i},\beta],\theta_t^{i,N},\beta_t)\d t+\sigma(t,X_t^i[\boldsymbol{\alpha}^{-i},\beta],\theta_t^{i,N},\beta_t)\d W_t^i,\\
\d Y_t^j[\boldsymbol{\alpha}^{-i},\beta]&=b(t,X_t^j[\boldsymbol{\alpha}^{-i},\beta],\theta_t^{i,N},\alpha_t^j)\d t+\sigma(t,X_t^j[\boldsymbol{\alpha}^{-i},\beta],\theta_t^{i,N},\alpha^j_t)\d W_t^j,
\end{align*}
with $Y_0^i[\boldsymbol{\alpha}^{-i},\beta]=\eta^i$, $Y_0^j[\boldsymbol{\alpha}^{-i},\beta]=\eta^j$, $\forall j\neq i$, $X^j[\boldsymbol{\alpha}^{-i},\beta]=\Gamma(Y^j[\boldsymbol{\alpha}^{-i},\beta],A^j)$ for $j=1,\ldots,N$, and
\begin{align*}
\boldsymbol{\theta}^{i,N}=\frac1N\left(\sum_{j\neq i}\delta_{(X^j[\boldsymbol{\alpha}^{-i},\beta],\alpha^j)}+\delta_{(X^i[\boldsymbol{\alpha}^{-i},\beta],\beta)}\right),   
\end{align*}
where $\theta_t^{i,N}$ denotes the $t$-marginal distribution of $\boldsymbol{\theta}^{i,N}$. We also set that
\begin{align*}
P^{i,N}&=P'\circ \bigg( \left(Y^1[\boldsymbol{\alpha}^{-i},\beta],\delta_{\alpha_t^1}(\d u)\d t,W^1,A^1\right),\dots,\left(Y^i[\boldsymbol{\alpha}^{-i},\beta],\delta_{\beta_t}(\d u)\d t,W^i,A^i\right),\dots,\nonumber\\
&\qquad\qquad\ldots,\left(Y^N[\boldsymbol{\alpha}^{-i},\beta],\delta_{\alpha_t^N}(\d u)\d t,W^N,A^N\right) \bigg)^{-1}.    
\end{align*}
Then, by construction, we have $P^{i,N}\in R_{i,N}^{\rm s}(P^N)$,  where we recall that the set $R_{i,N}^{\rm s}(P^N)$ is given in \defref{RsiN} for $i=1,\ldots,N$. Since $P^N$ forms an $\boldsymbol{\epsilon}^N$-Nash equilibrium (recall \defref{NE_weak}), it holds that
\begin{align}\label{NE_inequa}
\mathcal{J}_i(P^N)\leq\mathcal{J}_i(P^{i,N})+\epsilon^{i,N}.
\end{align}
By applying Ito's rule to $|Y_t^j[\boldsymbol{\alpha}]-Y_t^j[\boldsymbol{\alpha}^{-i},\beta]|^2$ for $j\neq i$, and using BDG inequality to derive that, there is a constant $C>0$ independent of $(i,N)$ such that
\begin{align}\label{difference_estimate}
&\E^{P'}\left[\sup_{s\in[0,t]}\left|Y_s^j[\boldsymbol{\alpha}]-Y_s^j[\boldsymbol{\alpha}^{-i},\beta]\right|^2\right]\nonumber\\
&\qquad\leq C\E^{P'}\left[\int_0^t\left|Y_s^j[\boldsymbol{\alpha}]-Y_s^j[\boldsymbol{\alpha}^{-i},\beta]\right|^2\d s+\int_0^t\mathcal{W}_{2,\R\times U}(\rho_s^N,\theta_s^{i,N})^2\d s\right],
\end{align}
where we have taken advantage of the Lipschitz property of $(b,\sigma)$ in \assref{ass3}  and the mapping $\Gamma:\mathcal{D}\to\C$ in \equref{Lipschitz_Gamma}. By definition, we have
\begin{align}\label{mean_measure_difference}
&\E^{P'}\left[\sup_{s\in [0,t]}\mathcal{W}_{2,\R\times U}\left(\rho_s^N,\theta_s^{i,N}\right)^2\right]\nonumber\\
&\quad\leq \frac1N\E^{P'}\left[\sup_{s\in [0,t]}\left(\sum_{j\neq i}\left|Y_s^j[\boldsymbol{\alpha}]-Y_s^j[\boldsymbol{\alpha}^{-i},\beta]\right|^2+\left|Y_s^i[\boldsymbol{\alpha}]-Y_s^i[\boldsymbol{\alpha}^{-i},\beta]\right|^2+|\alpha_s^i-\beta_s|^2\right)\right].
\end{align}
The compactness of the policy space $U$ with basic moment estimation yields that, for some constant $C>0$ independent of $(i,N)$,
\begin{align}\label{moment_i}
\E^{P'}\left[\sup_{t\in [0,T]}\left|Y_s^i[\boldsymbol{\alpha}]-Y_s^i[\boldsymbol{\alpha}^{-i},\beta]\right|^2+\sup_{t\in [0,T]}|\alpha_t^i-\beta_t|^2\right]\leq C.
\end{align}
Summing \equref{difference_estimate} over $j\neq i$ and dividing by $N$, we deduce by Gronwall's inequality that
\begin{align}
\lim_{N\to\infty}\max_{i=1,\ldots,N}\frac1N\sum_{j\neq i}\E^{P'}\left[\sup_{t\in [0,T]}\left|Y_t^j[\boldsymbol{\alpha}]-Y_t^j[\boldsymbol{\alpha}^{-i},\beta]\right|^2\right]=0.
\end{align}
Consequently, by \equref{mean_measure_difference}, it holds that 
		\begin{align}\label{mean_measure_convergence}
			\lim_{N\to\infty}\max_{i=1,\ldots,N}\E^{P'}\left[\sup_{t\in [0,T]}\mathcal{W}_{2,\R\times U}(\rho_t^N,\theta_t^{i,N})^2\right]=0.
		\end{align}
        Note that by definition, we may easily verify that\begin{align}\label{CB_metric}
            \mathcal{W}_{2,\C\times\mathcal{B}}(\boldsymbol{\rho}^N,\boldsymbol{\theta}^{i,N})\leq (T\vee 1)\sup_{t\in [0,T]}\mathcal{W}_{2,\R\times U}(\rho_t^N,\theta^{i,N}),
        \end{align}
		which yields that $\lim_{N\to\infty}\max_{i=1,\ldots,N}\E^{P'}\left[\mathcal{W}_{2,\C\times\mathcal{B}}(\boldsymbol{\rho}^N,\boldsymbol{\theta}^{i,N})^2\right]=0$. Moreover, by using \lemref{extension_continuous}, it holds that 
		\begin{align}\label{measure_convergence_mean}
			\lim_{N\to\infty}\max_{i=1,\ldots,N}\E^{P'}\left[\mathcal{W}_{2,\C\times\mathcal{Q}}(\mathscr{R}(\boldsymbol{\rho}^N),\mathscr{R}(\boldsymbol{\theta}^{i,N}))^2\right]=0.
		\end{align}
		Applying Ito's formula to $|Y_t^i[\boldsymbol{\alpha}^{-i},\beta]-\mathtt{Y}_t^i|^2$, and using BDG inequality, we can similarly conclude that
		\begin{align*}
			\E^{P'}\left[\sup_{s\in[0,t]}\left|Y_s^i[\boldsymbol{\alpha}^{-i},\beta]-\mathtt{Y}_s^i\right|^2\right]\leq C\E^{P'}\left[\int_0^t\left|Y_s^i[\boldsymbol{\alpha}^{-i},\beta]-\mathtt{Y}_s^i\right|^2\d s+\int_0^t\mathcal{W}_{2,\R\times U}(\rho_s^N,\theta_s^{i,N})^2\d s\right],
		\end{align*}
		where $C>0$ is a constant independent of $(i,N)$. Using Gronwall's inequality to the above inequality and exploiting \equref{mean_measure_difference}, we can then derive that
		\begin{align}\label{moment_convergence}
			\lim_{N\to\infty}\max_{i=1,\ldots,N}\E^{P'}\left[\sup_{t\in[0,T]}\left|Y_t^i[\boldsymbol{\alpha}^{-i},\beta]-\mathtt{Y}_t^i\right|^2\right]=0.
		\end{align} 
		Moreover, if we set $\bar{P}^{i,N}=P'\circ\left(Y^i[\boldsymbol{\alpha}^{-i},\beta],\delta_{\beta_t}(\d u)\d t,W^i,A^i,\mathscr{R}(\boldsymbol{\theta}^{i,N})\right)^{-1}$, it holds from  \equref{measure_convergence_mean} and \equref{moment_convergence} that
		\begin{align}\label{extended_measure_convergence}
			\lim_{N\to\infty}\max_{i=1,\ldots,N}\mathcal{W}_{2,\bar{\Omega}}(\bar{P}^{i,N},\bar{P}^N)=0.
		\end{align}
		As a result of \equref{extended_measure_convergence} and the proof in \lemref{compactness}, we have
		\begin{align}\label{value_convergence}
			\lim_{N\to\infty}\max_{i=1,\ldots,N}\left|\bar{\mathcal{J}}\left(\bar{P}^{i,N},P'\circ(\mathscr{R}(\boldsymbol{\theta}^{i,N}))^{-1}\right)-\bar{\mathcal{J}}(\bar{P}^N,\Theta^N)\right|=0.
		\end{align}
		For $i=1,\ldots,N$, it holds that
		\begin{align}\label{value_equivalence}
			\bar{\mathcal{J}}\left(\bar{P}^{i,N},P'\circ(\mathscr{R}(\boldsymbol{\theta}^{i,N}))^{-1}\right)=\mathcal{J}_i(P^{i,N}).
		\end{align}
		Then, by combining \equref{NE_inequa}, \equref{value_convergence} and \equref{value_equivalence}, and noting that
		\begin{align*}
			&\E^{P^N}\left[\mathcal{J}(\boldsymbol{\delta}^N,\boldsymbol{\xi}^N)\right]-\bar{\mathcal{J}}(\bar{P}^N,\Theta^N)=\frac1N\sum_{i=1}^N\mathcal{J}_i(P^N)-\bar{\mathcal{J}}(\bar{P}^N,\Theta^N)\nonumber\\
			&\quad\leq \frac1N\sum_{i=1}^N\left(\mathcal{J}_i(P^N)-\mathcal{J}_i(P^{i,N})\right)+\frac1N\sum_{i=1}^N\left|\bar{\mathcal{J}}\left(\bar{P}^{i,N},P'\circ(\mathscr{R}(\boldsymbol{\theta}^{i,N}))^{-1}\right)-\bar{\mathcal{J}}(\bar{P}^N,\Theta^N)\right|\\
			&\quad\leq\frac1N\sum_{i=1}^N\epsilon^{i,N}+\max_{i=1,\ldots,N}\left|\bar{\mathcal{J}}\left(\bar{P}^{i,N},P'\circ(\mathscr{R}(\boldsymbol{\theta}^{i,N}))^{-1}\right)-\bar{\mathcal{J}}(\bar{P}^N,\Theta^N)\right|,
		\end{align*}
		and we can finally conclude that
		\begin{align*}
			&\liminf_{N\to\infty}\left(\E^{\Xi^N}\left[\mathcal{J}(P,\boldsymbol{\xi})\right]-\inf_{\bar{Q}\in R_{\rm e}^{\rm s}(\Theta^N)}\bar{\mathcal{J}}(\bar{Q},\Theta^N)\right)\leq\liminf_{N\to\infty}\left(\E^{P^N}\left[\mathcal{J}(\boldsymbol{\delta}^N,\boldsymbol{\xi}^N)\right]-\bar{\mathcal{J}}(\bar{P}^N,\Theta^N)+\frac1N\right)\\
			&\quad\leq\lim_{N\to\infty}\left(\frac1N\sum_{i=1}^N\epsilon^{i,N}+\max_{i=1,\ldots,N}\left|\bar{\mathcal{J}}\left(\bar{P}^{i,N},P'\circ(\mathscr{R}(\boldsymbol{\theta}^{i,N}))^{-1}\right)-\bar{\mathcal{J}}(\bar{P}^N,\Theta^N)\right|+\frac1N\right)=0,
		\end{align*}
		which confirms the desired result \equref{goal}. The proof is thus complete.
	\end{proof}

	\subsubsection{Proof of \thmref{convergence_theorem}}\label{proof:thm3}
	
\begin{proof}[Proof of \thmref{convergence_theorem}]
Let $\psi:[0,T]\times\R\to U$ be the corresponding measurable mapping to $(\boldsymbol{\rho}^*,P^*)$ stated in \defref{markovian}, and $(\Omega',\F',\Fb',P')$ be a filtered probability space supporting $N$ i.i.d. $\R^2$-valued continuous $\Fb'$-adapted processes $(W^i,A^i)~,i=1,\dots,N$ with the same law as $\hat P$ and $W^i$ being standard $\Fb'$-Brownian motion for every $i=1,\dots,N$ and i.i.d. real-valued $\F_0'$-measurable $\RV$s $(\eta^1,\ldots,\eta^N)$ with the same law as $\eta$ under $\Pb$. Moreover, let $\mathtt{Y}^i=(\mathtt{Y}^i_t)_{t\in [0,T]}$ be the unique strong solution to the SDE, for $i=1,\ldots,N$,
\begin{align}\label{copy_MKV}
\d \mathtt{Y}^i_t=b(t,\mathtt{X}^i_t,\rho_t^*,\alpha_t^i)\d t+\sigma(t,\mathtt{X}^i_t,\rho_t^*,\alpha_t^i)\d W_t^i,\quad\mathtt{Y}_0^i=\eta^i
\end{align}
with $\mathtt{X}^i_t=\Gamma(A^i,\mathtt{Y}^i)_t$, $\alpha_t^i=\psi(t,\mathtt{X}^i_t)$ for $t\in[0,T]$ and $\boldsymbol{\rho}^*=P'\circ (X,\alpha)^{-1}$. Then, we introduce that $\hat{\boldsymbol{\rho}}^N:=\frac1N\sum_{i=1}^N\delta_{(\mathtt{X}^i,\alpha^i)}$ and $\hat{\rho}^N_t:=\frac1N\sum_{i=1}^N\delta_{(\mathtt{X}^i_t,\alpha^i_t)}$. By the classical theory of McKean-Vlasov SDE (see Djete et. al. \cite{Djete}), it follows that $\mathtt{Y}^i=(\mathtt{Y}_t^i)_{t\in[0,T]}$ for $i=1,\ldots,N$ are i.i.d., and hence by the law of large numbers, we have  $\hat{\boldsymbol{\rho}}^N$ converges weakly to $\boldsymbol{\rho}^*$ as $N\to\infty$. 
		
The finite moment estimates in \lemref{moment_p} can upgrade this convergence to $\Pc_2(\C\times \mathcal{B})$ (c.f. Proposition 5.2 in Lacker \cite{Lacker1}). That is, it holds that
\begin{align}\label{LLN}
\lim_{N\to\infty}\mathcal{W}_{2,\C\times\mathcal{B}}(\boldsymbol{\rho}^*,\hat{\boldsymbol{\rho}}^N)=0.
\end{align}  
Moreover, we have $P^*=P'\circ (\mathtt{Y}^1,\alpha^1,W^1,A^1)^{-1}$, and $P'$-$\as$,
\begin{align}\label{LLN_strong}
\lim_{N\to\infty}\mathcal{W}_{2,\Omega}\left(\frac1N\sum_{i=1}^N\delta_{(\mathtt{Y}^i,\delta_{\alpha^i_t}(\d u)\d t,W^i,A^i)},P^*\right)=0.
\end{align} 
Let $\mathtt{Y}^{i,N}=(\mathtt{Y}_t^{i,N})_{t\in[0,T]}$ be the unique solution to the following SDE, for $i=1,\ldots,N$,
\begin{align*}	\d\mathtt{Y}_t^{i,N}=b(t,\mathtt{X}_t^{i,N},\rho_t^N,\alpha_t^i)\d t+\sigma(t,\mathtt{X}_t^{i,N},\rho_t^N,\alpha_t^i)\d W_t^i,\quad \mathtt{Y}_0^{i,N}=\eta^i  
\end{align*}
with $\mathtt{X}^{i,N}_t:=\Gamma(A^i,\mathtt{Y}^{i,N})_t$ and $\rho_t^N:=\frac{1}{N}\sum_{i=1}^N\delta_{(\mathtt{X}^{i,N}_t,\alpha^i_t)}$ for $t\in[0,T]$. By applying It\^{o} lemma to $|\mathtt{Y}^i_t-\mathtt{Y}_t^{i,N}|^2$ and BDG inequality, one can derive that
\begin{align}\label{Y_estimation}	\E^{P'}\left[\sup_{s\in[0,t]}\left|\mathtt{Y}_s^i-\mathtt{Y}_s^{i,N}\right|^2\right]\leq C\E^{P'}\left[\int_0^t\left|\mathtt{Y}_s^i-\mathtt{Y}_s^{i,N}\right|^2\d s+\int_0^t\mathcal{W}_{2,\R\times U}(\rho_s^*,\rho_s^N)^2\d s\right],
\end{align}
where $C>0$ is a constant independent of $(i,N)$ and we have exploited the Lipschitz property of $b,\sigma$ imposed in \assref{ass3}  and the mapping $\Gamma:\mathcal{D}\to\C$ defined by \equref{Lipschitz_Gamma}. By Gronwall's inequality and the triangular inequality, we have
		\begin{align}\label{triangular}
			\mathcal{W}_{2,\R\times U}(\rho_s^*,\rho_s^N)^2&\leq 2\mathcal{W}_{2,\R\times U}(\rho_s^*,\hat{\rho}_s^N)^2+2\mathcal{W}_{2,\R\times U}(\hat{\rho}_s^N,\rho_s^N)^2\nonumber\\
			&\leq  2\mathcal{W}_{2,\R\times U}(\rho_s^*,\hat{\rho}_s^N)^2+\frac2N\sum_{i=1}^N|(\mathtt{X}_s^{i,N},\alpha^i_s)-(\mathtt{X}_s^i,\alpha^i_s)|^2\nonumber\\
			&\leq 2\mathcal{W}_{2,\R\times U}(\rho_s^*,\hat{\rho}_s^N)^2+\frac2N\sum_{i=1}^N|\mathtt{Y}_s^{i,N}-\mathtt{Y}_s^i|^2.
		\end{align}
		By averaging \equref{Y_estimation} from $i=1$ to $N$, and applying Gronwall's inequality, it holds that
		\begin{align*}
			\frac1N\sum_{i=1}^N\E^{P'}\left[\sup_{s\in[0,t]}\left|\mathtt{Y}_s^i-\mathtt{Y}_s^{i,N}\right|^2\right]\leq C\E^{P'}\left[\int_0^t\mathcal{W}_{2,\R\times U}(\rho_s^*,\rho_s^N)^2\d s\right]
		\end{align*}
		with $C>0$ being a generic constant independent of $(i,N)$. Hence, we have from \lemref{CB_property}, \equref{LLN} and \equref{triangular} that
		\begin{align}\label{Y_convergence_pre}
			\lim_{N\to\infty}\frac1N\sum_{i=1}^N\E^{P'}\left[\sup_{t\in[0,T]}\left|\mathtt{Y}^i_t-\mathtt{Y}_t^{i,N}\right|^2\right]=0.
		\end{align}
		As a direct consequence of \equref{LLN}, \equref{triangular} and \equref{Y_convergence_pre}, we get that
		\begin{align}\label{mean_convergence}
			\lim_{N\to\infty}\E^{P'}\left[\mathcal{W}_{2,\C\times\mathcal{B}}(\boldsymbol{\rho}^*,\boldsymbol{\rho}^N)\right]=0.
		\end{align}
Inserting \equref{mean_convergence} into \equref{Y_estimation}, applying \lemref{CB_property} and Gronwall's inequality, we arrive at
\begin{align}\label{Y_convergence}
\lim_{N\to\infty}\E^{P'}\left[\sup_{t\in[0,T]}\left|\mathtt{Y}^i_t-\mathtt{Y}_t^{i,N}\right|^2\right]=0.
\end{align}
We then set $P^N:=P'\circ((\mathtt{Y}^{i,N},\delta_{\alpha_t^i}(\d u)\d t,W^i,A^i)_{i=1}^N)^{-1}$, and thus $P^N\in R_N^{\rm s}$ by construction. It follows from \equref{Y_convergence} that, for $i=1,\ldots,N$,
\begin{align}\label{P_convergence}
\lim_{N\to\infty}\mathcal{W}_{2,\Omega}\left(P^N\circ(Y^i,\Lambda^i,W^i,A^i)^{-1},P^*\right)=0.
\end{align}
On the other hand, we have by the triangular inequality that
{\small\begin{align*}
&\quad\E^{P^N}\left[\mathcal{W}_{2,\Omega}\left(\frac1N\sum_{i=1}^N\delta_{(Y^i,\delta_{\alpha_t^{i,N}}(\d u)\d t,W^i,A^i)},P^*\right)\right]=\E^{P'}\left[\mathcal{W}_{2,\Omega}\left(\frac1N\sum_{i=1}^N\delta_{(\mathtt{Y}^{i,N},\delta_{\alpha_t^{i}}(\d u)\d t,W^i,A^i)},P^*\right)\right]\\
&\qquad\leq \E^{P'}\left[\mathcal{W}_{2,\Omega}\left(\frac1N\sum_{i=1}^N\delta_{\left(\mathtt{Y}^{i,N},\delta_{\alpha_t^{i}}(\d u)\d t,W^i,A^i\right)},\frac1N\sum_{i=1}^N\delta_{\left(\mathtt{Y}^i,\delta_{\alpha_t^i}(\d u)\d t,W^i,A^i\right)}\right)\right.\\
&\qquad\quad\left.+\mathcal{W}_{2,\Omega}\left(P^*,\frac1N\sum_{i=1}^N\delta_{\left(\mathtt{Y}^i,\delta_{\alpha_t^i}(\d u)\d t,W^i,A^i\right)}\right)\right]=:I_1+I_2.
\end{align*}}By definition, we have $I_1\leq\frac1N\sum_{i=1}^N\E^{P'}\left[\|\mathtt{Y}^i-\mathtt{Y}^{i,N}\|_{\infty}^2\right]$, which converges to $0$ as $N\to\infty$ by using \equref{Y_convergence}. Applying \equref{LLN_strong} and the dominated convergence theorem, we obtain that $I_2$ converges to $0$ as $N\to\infty$. Thus, it holds that
		\begin{align*}
			\lim_{N\to\infty}\E^{P^N}\left[\mathcal{W}_{2,\Omega}\left(\frac1N\sum_{i=1}^N\delta_{(Y^i,\Lambda_t^i(\d u)\d t,W^i, A^i)},P^*\right)\right]=0.   
		\end{align*}
		The rest is to show that, there exists some $\boldsymbol{\epsilon}^N\in\R_+^N$ for $N\geq 1$ satisfying $\lim\limits_{N\to\infty}\max\limits_{i=1,\dots,N}\epsilon^{i,N}=0$, such that $P^N$ is an $\boldsymbol{\epsilon}^N$-Nash equilibrium.  To this end, let us set\begin{align}
\label{epsiloniN}
\epsilon^{i,N}:=\sup_{Q^N\in R^{\rm s}_{i,N}(P^N)}\left(\mathcal{J}_i(P^N)-\mathcal{J}_i(Q^N)\right).
\end{align} 
By construction, we have $\epsilon^{i,N}\geq 0$, and hence it only suffices to show that $\lim\limits_{N\to\infty}\max\limits_{i=1,\dots,N}\epsilon^{i,N}=0$. Note that, for any $N\geq1$, there exists some $Q^N\in R_{i,N}^{\rm s}(P^N)$ such that 
\begin{align}\label{Q_construction}
\mathcal{J}_i(P^N)-\mathcal{J}_i(Q^N)>\epsilon^{i,N}-2^{-N}.
\end{align} 
Without loss of generality, we assume that the filtered probability space with $(W^i,A^i,\eta^i,\mathtt{\alpha}^i)_{i=1}^N$ is the basic stochastic element corresponding to $(P^N,Q^N)$ introduced in \defref{RsiN} (we can similarly define $\mathtt{Y}^i$ on a different space and so is $\alpha^i$) and there exists a $\beta=(\beta_t)_{t\in [0,T]}$ being a $U$-valued $\Fb'$-progressively measurable process such that 
\begin{align*}
Q^N=P'\circ\left((\mathscr{Z}^{j,N})_{j=1}^N,(\delta_{\boldsymbol{\alpha}_t^{-i}}(\d u)\d t,\delta_{\beta_t}(\d u)\d t),(W^j)_{j=1}^N,(A^j)_{j=1}^N\right)^{-1},    
\end{align*}
where $(\mathscr{Z}^{j,N})_{j=1}^N=((\mathscr{Z}_t^{j,N})_{t\in [0,T]})_{j=1}^N$ is the unique solution to the system of SDEs:
		\begin{align*}
			\mathscr{Z}_t^{j,N}&=b(t,\mathscr{X}_t^{j,N},\theta^N_t,\alpha^j_t)\d t+\sigma(t,\mathscr{X}_t^{j,N},\theta^N_t,\alpha^j_t)\d W_t^j,~\mathscr{Z}_0^j=\eta^j,~j\neq i,\\
			\mathscr{Z}_t^{i,N}&=b(t,\mathscr{X}_t^{i,N},\theta^N_t,\beta_t)\d t+\sigma(t,\mathscr{X}_t^{i,N},\theta^N_t,\beta_t)\d W_t^i,~\mathscr{Z}_0^i=\eta^i
		\end{align*}
		with $\mathscr{X}_t^{j,N}=\Gamma(A^j,\mathscr{Y}^{j,N})_t$ for $j=1,\ldots,N$ and
		\begin{align*}
			\boldsymbol{\theta}^N:=\frac1N\left(\sum_{j\neq i}\delta_{(\mathscr{X}^{j,N},\alpha^j)}+\delta_{(\mathscr{X}^{i,N},\beta)}\right),\quad \theta_t^N=\frac1N\left(\sum_{j\neq i}\delta_{(\mathscr{X}^{j,N}_t,\alpha^j_t)}+\delta_{(\mathscr{X}^{i,N}_t,\beta_t)}\right).    
		\end{align*}
		Applying It\^{o} formula to $|\mathscr{Z}_t^{j,N}-\mathtt{Y}^{j,N}|^2$ for $j\neq i$ and using BDG inequality, we have
		\begin{align}\label{ZY}
			\E^{P'}\left[\sup_{s\in[0,t]}\left|\mathscr{Z}_s^{j,N}-\mathtt{Y}^j_s\right|^2\right]\leq C\E^{P'}\left[\int_0^t\left|\mathtt{Y}_s^j-\mathscr{Z}_s^{j,N}\right|^2\d s+\int_0^t\mathcal{W}_{2,\R\times U}(\rho_s^*,\theta_s^N)^2\d s\right],
		\end{align} 
		where $C>0$ is a constant independent of $(j,N)$ and we have taken advantage of the Lipschitz property of $b,\sigma$ imposed in \assref{ass3} and the mapping $\Gamma:\mathcal{D}\to\C$ in \equref{Lipschitz_Gamma}. Moreover, we have
		\begin{align}\label{difference}
			\E^{P'}\left[\mathcal{W}_2\left(\hat{\rho}_t^N,\theta_t^N\right)^2\right]&\leq\frac1N\E^{P'}\left[\sum_{j\neq i}\left|\mathscr{Z}_t^{j,N}-\mathtt{Y}_t^j\right|^2+\left|\mathscr{Z}_t^{i,N}-\mathtt{Y}_t^i\right|^2+\left|\alpha_t^i-\beta_t\right|^2\right].
		\end{align}
		A standard estimate (similar as \lemref{moment_p_N}), together with the growth condition provided in \assref{ass1}, results in the existence of a constant $C>0$ independent of $(i,N,t)$ such that
		\begin{align*}
			\E^{P'}\left[\sup_{t\in [0,T]}\left(\left|\mathscr{Z}_t^{i,N}-\mathtt{Y}_t^i\right|^2+\left|\alpha_t^i-\beta_t\right|^2+\mathcal{W}_2\left(\rho_t^*,\hat{\rho}_t^N\right)^2\right)\right]\leq C.   
		\end{align*}
		Consequently, the estimate \equref{difference} turns to be, for all $t\in[0,T]$,
		\begin{align}\label{difference1}
			\E^{P'}\left[\mathcal{W}_2\left(\hat{\rho}_s^N,\theta_s^N\right)^2\right]\leq\frac1N\E^{P'}\left[\sum_{j\neq i}\left|\mathscr{Z}_s^{j,N}-\mathtt{Y}_s^j\right|^2\right]+\frac{C}{N}.
		\end{align}
		Then, the estimate \equref{difference1}, together with \equref{triangular}, yields that, for all $t\in[0,T]$,
		\begin{align}\label{difference2}
			\E^{P'}\left[\mathcal{W}_2\left(\rho_t^*,\theta_t^N\right)^2\right]\leq\frac2N\E^{P'}\left[\sum_{j\neq i}\left|\mathscr{Z}_t^{j,N}-\mathtt{Y}_t^j\right|^2\right]+\frac{2C}{N}+	2\E^{P'}\left[\mathcal{W}_2\left(\hat{\rho}_t^N,\rho_t^*\right)^2\right].
		\end{align}
		Combining \equref{LLN} and \equref{difference2}, summing \equref{ZY} over $j\neq i$, and then dividing by $N$, we conclude from the Gronwall's inequality that
		\begin{align}\label{Z_convergence}
			\lim_{N\to\infty}\frac1N\sum_{j\neq i}\E^{P'}\left[\sup_{t\in[0,T]}\left|\mathscr{Z}_t^{j,N}-\mathtt{Y}^j_t\right|^2\right]=0.
		\end{align}
		Furthermore, as a result of \equref{Z_convergence}, it holds that
		\begin{align}\label{measure_convergence}
			\lim_{N\to\infty}\E^{P'}\left[\mathcal{W}_{2,\C\times\mathcal{B}}\left(\boldsymbol{\rho}^*,\boldsymbol{\theta}^N\right)\right]=0.
		\end{align}
		Let $\mathscr{Y}^i=(\mathscr{Y}_t^i)_{t\in [0,T]}$ be the unique solution to the following SDE:
		\begin{align*}
			\d\mathscr{Y}_t^i=b(t,\Gamma(A^i,\mathscr{Y}^i)_t,\rho_t^*,\beta_t)\d t+\sigma(t,\Gamma(A^i,\mathscr{Y}^i)_t,\rho_t^*,\beta_t)\d W_t^i,~ \mathscr{Y}_0^i=\eta^i.    
		\end{align*}
Then, it follows from It\^{o}'s lemma to $|\mathscr{Y}_t^i-\mathscr{Z}_t^i|^2$ and BDG inequality that
\begin{align*}	\E^{P'}\left[\sup_{s\in[0,t]}\left|\mathscr{Z}_s^{i,N}-\mathscr{Y}^i_s\right|^2\right]\leq C\E^{P'}\left[\int_0^t\left|\mathscr{Y}_s^i-\mathscr{Z}_s^{i,N}\right|^2\d s+\int_0^t\mathcal{W}_{2,\R\times U}\left(\rho_s^*,\theta_s^N\right)^2\d s\right].   
\end{align*}
We derive from \equref{measure_convergence} that
\begin{align}\label{YZ_convergence}
\lim_{N\to\infty}\E^{P'}\left[\sup_{s\in[0,t]}\left|\mathscr{Z}_s^{i,N}-\mathscr{Y}^i_s\right|^2\right]=0.
\end{align}
Set $Q:=P'\circ(\mathscr{Y}^i,\delta_{\beta_t}(\d u)\d t,W^i,A^i)$. Thus, $Q\in R^{\rm s}(\boldsymbol{\xi}^*)$ with $\boldsymbol{\xi}^*=\mathscr{R}(\boldsymbol{\rho}^*)$. Then, it can be proved in the same fashion of \equref{convergence} that
\begin{align}\label{Q_convergence}	\lim_{N\to\infty}\mathcal{W}_{2,\Omega}\left(Q^N\circ (Y^i,\Lambda^i,W^i,A^i)^{-1},Q\right)=0.
\end{align}
Note also that (recall \equref{Delta_i}): 
\begin{align*}	\mathcal{J}_i(Q^N)&=\E^{Q^N}\left[\Delta_i(\mathscr{R}(\boldsymbol{\theta}^N))\right]=\E^{Q^N}\left[\Delta_i(\boldsymbol{\xi}^*)\right]+\E^{Q^N}\left[\Delta_i(\mathscr{R}(\boldsymbol{\theta}^N))-\Delta_i(\boldsymbol{\xi}^*)\right]=I_1^N+I_2^N.
\end{align*}
It follows from \equref{Q_convergence} and and the proof in \lemref{compactness} that $\lim_{N\to\infty}I_1^N=\E^Q\left[\Delta(\boldsymbol{\xi}^*)\right]=\mathcal{J}(\boldsymbol{\xi}^*,Q)$. In view of the uniform convergence in (A1) of \assref{ass1} and \equref{measure_convergence}, we obtain that $\lim_{N\to\infty}I_2^N=0$. As a result, it holds that
\begin{align}\label{value_convergence1}
\lim_{N\to\infty}\mathcal{J}_i(Q^N)=\mathcal{J}(\boldsymbol{\xi}^*,Q).
\end{align}
Similarly, we can also show that
\begin{align}\label{value_convergence2}
\lim_{N\to\infty}\mathcal{J}_i(P^N)=\mathcal{J}(\boldsymbol{\xi}^*,P).
\end{align}  
In light of the optimality of $P^*$, we have $\mathcal{J}(\boldsymbol{\xi}^*,P^*)\leq\mathcal{J}(\boldsymbol{\xi}^*,Q)$. It then follows that\begin{align*}
			\epsilon^{i,N}-2^{-N}&<\mathcal{J}_i(P^N)-\mathcal{J}_i(Q^N)\\
			&=(\mathcal{J}_i(P^N)-\mathcal{J}(\boldsymbol{\xi}^*,P^*))-(\mathcal{J}_i(Q^N)-\mathcal{J}(\boldsymbol{\xi}^*,Q))+(\mathcal{J}(\boldsymbol{\xi}^*,P^*)-\mathcal{J}(\boldsymbol{\xi}^*,Q))\\
			&\leq |\mathcal{J}_i(P^N)-\mathcal{J}(\boldsymbol{\xi}^*,P^*)|+|\mathcal{J}_i(Q^N)-\mathcal{J}(\boldsymbol{\xi}^*,Q)|,
		\end{align*}
		which indicates that $\lim_{N\to\infty}\epsilon^{i,N}=0$ in view of \equref{value_convergence1}, \equref{value_convergence2} and the construction of $Q^N$ in \equref{Q_construction}. Note that the convergence is uniform in $i$, and hence $\lim_{N\to\infty}\max_{i=1,\dots,N}\epsilon^{i,N}=0$. The proof is thus complete.
	\end{proof}
	
\subsection{Proofs of auxiliary results}\label{appendix}

In this subsection, we collect proofs of auxiliary results in the previous sections.
\begin{proof}[Proof of \lemref{CB_property}]
Let $(X^1,\alpha^1)$ and $(X^2,\alpha^2)$ be $\C\times\mathcal{B}$-valued $\RV$s such that $\Law(X^1,\alpha^1)=\boldsymbol{\rho}^1$,  $\Law(X^2,\alpha^2)=\boldsymbol{\rho}^2$ and $\mathcal{W}_{2,\C\times\mathcal{B}}(\boldsymbol{\rho}^1,\boldsymbol{\rho}^2)^2=\E\left[d_{\C\times\mathcal{B}}((X^1,\alpha^1),(X^2,\alpha^2))^2\right]$. Note that $\Law(X_t^1,\alpha_t^1)=\rho_t^1$ and $\Law(X_t^2,\alpha_t^2)=\rho_t^2$. Then, we have 
\begin{align*}
&\int_0^T\mathcal{W}_{2,\R\times U}(\rho_t^1,\rho_t^2)^2\d t\leq \int_0^T\E\left[\left|X_t^1-X_t^2\right|^2+\left|\alpha_t^1-\alpha_t^2\right|^2\right]\d t\nonumber\\
&\quad\leq \int_0^T\E\left[\left\|X^1-X^2\right\|_{\infty}^2\right]\d t+\E\left[\int_0^T\left|\alpha_t^1-\alpha_t^2\right|^2\d t\right]=\E\left[T\left\|X^1-X^2\right\|_{\infty}^2+d_{\mathcal{B}}(\alpha^1,\alpha^2)^2\right]\\
&\quad\leq C\E\left[d_{\C\times\mathcal{B}}((X^1,\alpha^1),(X^2,\alpha^2))^2\right]=C\mathcal{W}_{2,\C\times\mathcal{B}}(\boldsymbol{\rho}^1,\boldsymbol{\rho}^2)^2
\end{align*}
		with $C=T\vee 1$. Above, we also applied Fubini's theorem in the second inequality. Thus, the desired result follows.
	\end{proof}

\begin{proof}[Proof of \lemref{Lipschitz_P}]
Let $(X_i,q_i)$ for $i=1,2$ be two random variables taking values in $\R\times\Pc(U)$ on some probability space $(\Omega',\F',\Pb')$ such that $\Law^{P'}(X_i,q_i)=\xi_i$ for $i=1,2$ and
\begin{align*}
\mathcal{W}_{2,\R\times\Pc(U)}(\xi_1,\xi_2)^2=\E^{P'}\left[d_{\R\times\Pc(U)}((X_1,q_1),(X_2,q_2))^2\right],   
\end{align*}
In terms of definition $\rho_i:=\mathscr{P}(\xi_i)$ for $i=1,2$, we have $\rho_i|_{U}(\d u)=\int_{\R}\int_{\mathcal{M}(U)}q(\d u)\xi_i(\d x,\d q)=\E^{P'}[q_i(\d u)]$ for $i=1,2$. We also note that
\begin{align*}
\mathcal{W}^2_{2,\R\times U}(\rho_1,\rho_2)=\inf_{\substack{\Law^{\mathbb{P}}(Y_1,\alpha_1)=\rho_1\\\Law^{\mathbb{P}}(Y_2,\alpha_2)=\rho_2}}
\E\left[|Y_1-Y_2|^2+|\alpha_1-\alpha_2|^2\right]=\mathcal{W}^2_{2,\R}(\rho_1|_{\R},\rho_2|_{\R})+\mathcal{W}^2_{2,U}(\rho_1|_{U},\rho_2|_U).    
\end{align*}

		Let us claim that
		\begin{align}\label{commute}
			\mathcal{W}^2_{2,U}\left(\E^{P'}[q_1(\d u)],~\E^{P'}[q_2(\d u)]\right)\leq\E^{P'}\left[\mathcal{W}^2_{2,U}(q_1,q_2)\right].
		\end{align}
		Indeed, using the Kantorovich's duality, we deduce that
		\begin{align*}
			&\mathcal{W}^2_{2,U}\left(\E^{P'}[q_1(\d u)],~\E^{P'}[q_2(\d u)]\right)=\sup_{\substack{f(x)+g(y)\leq |x-y|^2}}\int_Uf(u)\E^{P'}[q_1(\d u)]+\int_Ug(u)\E^{P'}[q_2(\d u)]\\
			&\qquad=\sup_{\substack{f(x)+g(y)\leq |x-y|^2}}\int_Uf(u)\int_{\Omega'}q_1(\omega',\d u)P'(\d \omega)+\int_Ug(u)\int_{\Omega'}q_2(\omega',\d u)P'(\d \omega)\\
			&\qquad=\sup_{\substack{f(x)+g(y)\leq |x-y|^2}}\int_{\Omega'}\left(\int_Uf(u)q_1(\omega',\d u)+\int_Ug(u)q_2(\omega',\d u)\right)P'(\d\omega)\\
			&\qquad\leq \int_{\Omega'}\sup_{\substack{f(x)+g(y)\leq |x-y|^2}}\left(\int_Uf(u)q_1(\omega',\d u)+\int_Ug(u)q_2(\omega',\d u)\right)P'(\d\omega)\\
			&\qquad=\E^{P'}\left[\mathcal{W}^2_{2,U}(q_1,q_2)\right],
		\end{align*}
		where we have applied Fubini' theorem in the third equality. Given the result \equref{commute}, we then derive from the fact $\rho_i|_{\R}=\Law^{P'}(X_i)$ for $i=1,2$ that
		\begin{align*}
			\mathcal{W}^2_{2,\R\times U}(\rho_1,\rho_2)\leq\E^{P'}\left[|X_1-X_2|^2+\mathcal{W}^2_{2,U}(q_1,q_2)\right]\leq \E\left[d^2_{\R\times\Pc(U)}((X_1,q_1),(X_2,q_2))\right].  
		\end{align*}
		This implies the desired result. 
	\end{proof}

	\begin{proof}[Proof of \lemref{extension_continuous}] 
		Choose $(X,\alpha),(Y,\beta)\in\C\times\mathcal{B}$ such that $\Law(X,\alpha)=\boldsymbol{\rho}^1,\Law(Y,\beta)=\boldsymbol{\rho}^2$ and \[
		\mathcal{W}_{2,\C\times\mathcal{B}}(\boldsymbol{\rho}^1,\boldsymbol{\rho}^2)^2=\E\left[\|X-Y\|_{\infty}^2+d_{\mathcal{B}}(\alpha,\beta)^2\right].
		\]
		Then, by Kantorovich duality, it holds that
		\begin{align*}
			\mathcal{W}_{2,\C\times\mathcal{Q}}(\mathscr{R}(\boldsymbol{\rho}^1),\mathscr{R}(\boldsymbol{\rho}^2))^2&=\mathcal{W}_{2,\C\times\mathcal{Q}}(\Psi(X,\alpha),\Psi(Y,\beta))^2\\
			&\leq \E\left[\|X-Y\|^2+d_{\mathcal{Q}}(\delta_{\alpha_t}(\d u)\d t,\delta_{\beta_t}(\d u)\d t)^2\right]\\
			&= \E\left[\|X-Y\|_{\infty}^2+\sup_{(h_1,h_2):h_1(u)+h_2(v)\leq|u-v|^2}\int_0^T(h_1(\alpha_t)+h_2(\beta_t))\d t\right]\\
			&\leq\E\left[\|X-Y\|_{\infty}^2+\int_0^T|\alpha_t-\beta_t|^2\d t\right]=\mathcal{W}_{2,\C\times\mathcal{B}}(\boldsymbol{\rho}^1,\boldsymbol{\rho}^2)^2.
		\end{align*}
		The proof is thus complete.
	\end{proof}

\begin{proof}[Proof of \propref{measurable}]
Let $\mu_t$ be the first marginal law of $\xi_t$ for $t\in [0,T]$. In light of \assref{ass2}, for $(t,\omega)\in [0,T]\times\Omega$, it holds that
\begin{align*}
&\int_U\tb(t,X_t(\omega),\xi_t,u)\Lambda_t(\omega,\d u)=\int_Ub_0(t,X_t(\omega),\mu_t,u)\Lambda_t(\omega,\d u)+\lambda_1\int_{\R\times U}b_0(t,x,\mu_t,u)\mathscr{P}(\xi_t)(\d x,\d u)\\
&\quad=\int_Ub_0(t,X_t(\omega),\mu_t,u)\Lambda_t(\omega,\d u)+\lambda_1\int_{\R\times \Pc(U)}\int_Ub_0(t,x,\mu_t,u)q(\d u)\xi_t(\d x,\d q),
\end{align*}
where we have used the fact that $\mathscr{P}(\xi_t)|_{\R}=\xi_t|_{\R}=\mu_t$. The similar result also holds for the extensions $\ts$ and $\tf$, respectively.
		
Let us introduce the set $K(t,\omega):=K(t,X_t(\omega),\xi_t)$ for $(t,\omega)\in[0,T]\times\Omega$. Note that $K(t,\omega)$ is convex and closed, we have
\begin{align*}
			&\left(\int_Ub_0(t,X_t(\omega),\mu_t,u)q_t(\omega,\d u),\int_U\sigma_0^2(t,X_t(\omega),\mu_t,u)q_t(\omega,\d u),\int_Uf_0(t,X_t(\omega),\mu_t,u)q_t(\omega,\d u)\right)\nonumber\\
			&\qquad\in K(t,\omega),
		\end{align*}
		which is equivalent to the set
		\begin{align*}
			U(t,\omega):=\Big\{u\in U;~ &\int_Ub_0(t,X_t(\omega),\mu_t,u)q_t(\omega,\d u)=b_0(t,X_t(\omega),\mu_t,u),\\	 		&\int_U\sigma_0^2(t,X_t(\omega),\mu_t,u)q_t(\omega,\d u)=\sigma_0^2(t,X_t(\omega),\mu_t,u),\\
			&\int_Uf_0(t,X_t(\omega),\mu_t,u)q_t(\omega,\d u)\geq f_0(t,X_t(\omega),\mu_t,u)\Big\}\neq\varnothing.
		\end{align*}
As the policy space $U$ is compact, there exists a measurable selection $\alpha:[0,T]\times\Omega\to U$ such that $\alpha_t(\omega)\in U(t,\omega)$ for $(t,\omega)\in [0,T]\times\Omega$, and  $\alpha$ is $\mathcal{B}([0,t])\otimes\mathcal{F}_t/\mathcal{B}(U)$ measurable according to Leese \cite{Leese}. Now, let us set $Q=P\circ (Y,\delta_{\alpha_t}(\d u)\d t, W,A)^{-1}\in\Pc_2(\Omega)$ and we will show that $Q\in R(\boldsymbol{\xi})$. To this end, it suffices to prove (iv) of \defref{relaxed_control}. By the definition and \assref{ass2}, we have 
\begin{align*}
&M^{\boldsymbol{\xi}}\phi(t,\omega)=\phi(Y_t(\omega),W_t(\omega))-\int_0^t\int_U\bar{\mathcal{L}}\phi(s,X_s(\omega),Y_s(\omega),W_s(\omega),\xi_s,u)\Lambda_s(\omega,\d u)\d s\\
&\quad =\phi(Y_t(\omega),W_t(\omega))-\int_0^t\int_U\hat b_0(s,X_s(\omega),\mu_s,u)^{\T}\nabla\phi(Y_s(\omega),W_s(\omega))\Lambda_t(\omega,\d u)\d s\\
&\qquad-\int_0^t\int_U\frac12\tr\left(\hat\sigma_0\hat\sigma_0^{\T}(s,X_s(\omega),\mu_s,u)\nabla^2\phi(Y_s(\omega),W_s(\omega))\right)\Lambda_t(\omega,\d u)\d s\\
&\qquad-\int_0^t\int_{\R\times\Pc(U)}\int_U\lambda_1 \hat b_0(s,x,\mu_s,u)^{\T}\nabla\phi(Y_s(\omega),W_s(\omega))q(\d u)\xi_s(\d x,\d q)\d s\\
&\qquad-\int_0^t\int_{\R\times \Pc(U)}\int_U\frac{\lambda_2}{2}\tr\left(\hat\sigma_0\hat\sigma_0^2(s,x,\mu_s,u)\nabla^2\phi(Y_s(\omega),W_s(\omega))\right)q(\d u)\xi_s(\d x,\d q)\d s\\
&\quad =\phi(Y_t(\omega),W_t(\omega))-\int_0^t\hat b_0(s,X_s(\omega),\mu_s,\alpha_s)^{\T}\nabla\phi(Y_s(\omega),W_s(\omega))\d s\\
&\qquad-\int_0^t\frac12\tr\left(\hat\sigma_0\hat\sigma_0^{\T}(s,X_s(\omega),\mu_s,\alpha_s)\nabla^2\phi(Y_s(\omega),W_s(\omega))\right)\d s\\
&\qquad-\int_0^t\int_{\R\times\Pc(U)}\int_U\lambda_1 \hat b_0(s,x,\mu_s,u)^{\T}\nabla\phi(Y_s(\omega),W_s(\omega))q(\d u)\xi_s(\d x,\d q)\d s\\
&\qquad-\int_0^t\int_{\R\times \Pc(U)}\int_U\frac{\lambda_2}{2}\tr\left(\hat\sigma_0\hat\sigma_0^2(s,x,\mu_s,u)\nabla^2\phi(Y_s(\omega),W_s(\omega))\right)q(\d u)\xi_s(\d x,\d q)\d s\\
&\quad:=N^{\boldsymbol{\xi}}\phi(t,\omega),
\end{align*}
where, for $(t,x,\mu,u)\in [0,T]\times\R\times\Pc(\R)\times U$,
        \begin{align*}
            \hat b_0(t,x,\mu,u)=\begin{pmatrix}
                b_0(t,x,\mu,u)\\
                0
            \end{pmatrix},~~
            \hat\sigma_0(t,x,\mu,u)=\begin{pmatrix}
                \sigma_0(t,x,\mu,u)\\
                1
            \end{pmatrix},
        \end{align*} 
        and in the third equality we have exploited the fact that $\alpha_t(\omega)\in U(t,\omega)$. 
		Because $P\in R(\boldsymbol{\xi})$, $N^{\boldsymbol{\xi}}$ is then a $(P,\Fb)$-martingale. For any $0\leq s<t\leq T$ and $\F_s$-measurable $\RV$ $H$, we have
		\begin{align*}
			\E^Q[(M^{\boldsymbol{\xi}}\phi(t)-M^{\boldsymbol{\xi}}\phi(s))H]=\E^P[(N^{\boldsymbol{\xi}}\phi(t)-N^{\boldsymbol{\xi}}\phi(s))\mathcal{H}]=0,  
		\end{align*}
where $\mathcal{H}(\omega):=H(Y(\omega),\delta_{\alpha_t}(\omega)(\d u)\d t, A(\omega))$ is also $\F_s$-measurable due to the progressive measurability of $\alpha$. Hence, we conclude that $Q\in R(\boldsymbol{\xi})$. Moreover, it holds that
\begin{align*}
\mathcal{J}(Q;\boldsymbol{\xi})&=\E^Q\left[\int_0^T\int_U\tf(t,X_t,\xi_t,u)\Lambda_t(\d u)\d t+\int_0^Tc(t,X_t)\d R^A_t+g(X_T,\mu_T)\right]\\
&=\E^Q\left[\int_0^T\int_Uf_0(t,X_t,\mu_t,u)\Lambda_t(\d u)\d t+\int_0^Tc(t,X_t)\d R^A_t+g(X_T,\mu_T)\right]\\
&\quad+\lambda_3\int_0^T\int_{\R\times\Pc(U)}\int_Uf_0(t,x,\mu_t,u)q(\d u)\xi_t(\d x,\d q)\d t\\
&=\E^P\left[\int_0^T\int_Uf_0(t,X_t,\mu_t,u)\delta_{\alpha_t}(\d u)\d t+\int_0^Tc(t,X_t)\d R^A_t+g(X_T,\mu_T)\right]\\
&\quad+\lambda_3\int_0^T\int_{\R\times\Pc(U)}\int_Uf_0(t,x,\mu_t,u)q(\d u)\xi_t(\d x,\d q)\d t\\
&\leq\E^P\left[\int_0^T\int_U\tf(t,X_t,\xi_t,u)\Lambda_t(\d u)\d t+\int_0^Tc(t,X_t)\d R^A_t+g(X_T,\mu_T)\right]=\mathcal{J}(P;\boldsymbol{\xi}).
\end{align*}
Next, we turn to the confirm the second assertion. Let $\alpha=(\alpha_t)_{t\in [0,T]}$ be the measurable selection constructed above. Set $\boldsymbol{\rho}^*=P^*\circ (X,\alpha)^{-1}$ and $Q^*=P^*\circ (Y,\delta_{\alpha_t}(\d u)\d t, W, A)^{-1}$. It is easy to see that $\boldsymbol{\rho}^*|_{\C}=\boldsymbol{\xi}^*|_{\C}$ for $t\in [0,T]$ as $\boldsymbol{\xi}^*=P^*\circ(X,\Lambda)^{-1}$.
Then, we claim 
\begin{align}\label{equality}
\mathcal{J}(P;\boldsymbol{\xi}^*)=\mathcal{J}(P;\tilde{\boldsymbol{\xi}}),\quad \forall P\in\Pc_2(\Omega)
\end{align}
with $\tilde{\boldsymbol{\xi}}=\mathscr{R}(\boldsymbol{\rho}^*)$ for $t\in [0,T]$.
We first note that
\begin{align*}
&\int_{\R\times \Pc(U)}\int_Ub_0(t,x,\mu_t^*,u)q(\d u)\xi_t^*(\d x,\d q)=\E^{P^*}\left[\int_Ub_0(t,X_t,\mu_t^*,u)\Lambda_t(\d u)\right]=\E^{P^*}\left[b_0(t,X_t,\mu_t^*,\alpha_t)\right]\\
&\qquad=\int_{\R\times U}b_0(t,x,\mu_t^*,u)\rho_t^*(\d x,\d u)=\int_{\R\times \Pc(U)}\int_Ub_0(t,x,\mu_t^*,u)q(\d u)\tilde{\xi}_t(\d x,\d q),
\end{align*}
and similar results hold for $\sigma_0^2$ and $f_0$. Consequently, we have
\begin{align*}
\mathcal{J}(P;\boldsymbol{\xi}^*)&=\E^P\left[\int_0^T\int_Uf_0(t,X_t,\mu_t^*,u)\Lambda_t(\d u)\d t+\int_0^Tc(t,X_t)\d R^A_t+g(X_T,\mu_T)\right]\\
&\qquad+\lambda_3\int_0^T\int_{\R\times\Pc(U)}\int_Uf_0(t,x,\mu_t^*,u)q(\d u)\xi_t^*(\d x,\d q)\d t\\
&=\E^P\left[\int_0^T\int_Uf_0(t,X_t,\mu_t^*,u)\Lambda_t(\d u)\d t+\int_0^Tc(t,X_t)\d R^A_t+g(X_T,\mu_T)\right]\\
&\qquad+\lambda_3\int_0^T\int_{\R\times\Pc(U)}\int_Uf_0(t,x,\mu_t^*,u)q(\d u)\tilde{\xi}_t(\d x,\d q)\d t\nonumber\\
&=\mathcal{J}(P;\tilde{\boldsymbol{\xi}}).
\end{align*}
We next check that $(\boldsymbol{\rho}^*,Q^*)$ is indeed an (SW)-MFE. Similar to the proof of \equref{equality}, one can verify that, for all $\phi\in C_b^2(\R)$,  $M^{\boldsymbol{\xi}^*}\phi(t,\omega)=M^{\tilde{\boldsymbol{\xi}}}\phi(t,\omega)$ for $(t,\omega)\in [0,T]\times\Omega$. Therefore, we obtain $R(\boldsymbol{\xi}^*)=R(\tilde{\boldsymbol{\xi}})$ and $R^{\rm s}(\boldsymbol{\xi}^*)=R^{\rm s}(\tilde{\boldsymbol{\xi}})$, thus $Q^*\in R^{\rm s}(\tilde{\boldsymbol{\xi}})$, which verifies item (i) of \defref{strict_MFE}. Using the construction of $Q^*$ and $\boldsymbol{\rho}^*$, we can validate item (iii) of \defref{strict_MFE}. Finally, item (ii) of \defref{strict_MFE} follows from that
{\small\begin{align}\label{equivalence}
\mathcal{J}(Q^*,\tilde{\boldsymbol{\xi}})=\mathcal{J}(Q^*,\boldsymbol{\xi}^*)=\mathcal{J}(P^*,\boldsymbol{\xi}^*)&=\inf_{P\in R(\boldsymbol{\xi}^*)}\mathcal{J}(P,\boldsymbol{\xi}^*)
=\inf_{Q\in R(\tilde{\boldsymbol{\xi}})}\mathcal{J}(Q,\tilde{\boldsymbol{\xi}})\leq \inf_{Q\in R^{\rm s}(\tilde{\boldsymbol{\xi}})}\mathcal{J}(Q,\tilde{\boldsymbol{\xi}}),
\end{align}}where we have used \equref{equality} and the second equality can be easily checked. Hence, $Q^*$ is optimal among $R^{\rm s}(\tilde{\boldsymbol{\xi}})$ as $Q^*\in R^{\rm s}(\tilde{\boldsymbol{\xi}})$. Thus, the proof of the lemma is complete.
\end{proof}
	
\begin{proof}[Proof of \corref{existence_Markovian}]
We can follow and modify some arguments of Corollary 3.8 in Lacker \cite{Lacker} to give a proof in our context. In light of \thmref{existence_RMFE}, there exists an (R)-MFE $(\boldsymbol{\xi}^*,P^*)$. By theorem 2.5 of Karoui \cite{Karoui}, there exists $m\in\mathbb{N}$ and a measurable function $\bar{\sigma}:[0,T]\times\R\times\Pc_2(\R\times U)\times\Pc(U)$ such that $\bar{\sigma}$ is jointly continuous, uniformly continuous in $\rho\in\Pc_2(\R\times U)$ with respect to $(t,x,q)\in[0,T]\times\R\times\Pc(U)$ and $\bar{\sigma}^2(t,x,\rho,q)=\int_U\sigma^2(t,x,\rho,u)q(\d u)$ with $\bar{\sigma}(t,x,\rho,\delta_u)=\sigma(t,x,\rho,u)$. Moreover, we can find  a filtered probability space $(\Omega',\F',\Fb',P')$ supporting a scalar standard $\Fb'$-Brownian motion $W^*=(W_t^*)_{t\in [0,T]}$, a continuous $\Fb'$-adapted real valued process $A^*=(A^*_t)_{t\in [0,T]}$, a $\Pc(U)$-valued $\Fb'$-progressively measurable process $\Lambda^*=(\Lambda^*_t)_{t\in [0,T]}$ and an $\F_0'$-measurable $\R$-valued $\RV$ $\eta^*$ with $\Law^{P'}(\eta^*)=\Law^{\Pb}(\eta)$ such that $P^*=P'\circ(Y^*,\Lambda_t^*(\d u)\d t,W,^*,A^*)^{-1}$, where $Y^*=(Y_t^*)_{t\in [0,T]}$ is the unique strong solution to the SDE:
		\begin{align*}
			\d Y_t^*=\int_U\tb(t,X_t^*,\xi_t^*,u)\Lambda_t^*(\d u)\d t+\tilde{\bar{\sigma}}(t,X_t^*,\xi_t^*,\Lambda_t^*)\d W_t^*,~~Y_0^*=\eta^*    
		\end{align*}
with $\tilde{\bar{\sigma}}$ being the extension of $\bar{\sigma}$ and $X_t^*=\Gamma(A^*,Y^*)_t$.  Same as Corollary 3.8 in Lacker \cite{Lacker}, there exists a measurable function $q:[0,T]\times\R\mapsto\Pc(U)$ such that $q(t,X_t^*)(\d u)=\E^{P'}\left[\Lambda_t^*(\d u)\middle|X_t^*\right]$, ${\tt m}\times P'\mbox{-}\as$. 
In particular, it holds that, $\d t\times\d P'$-$\as$,
\begin{align}\label{invariance}
\int_U\tb(t,X_t^*,\xi_t^*,u)q(t,X_t^*)(\d u)&=\E^{P'}\left[\int_U\tb(t,X_t^*,\xi_t^*,u)\Lambda_t^*(\d u)\middle|X_t^*\right]\nonumber,\\ 
\tilde{\bar{\sigma}}^2(t,X_t^*,\xi_t^*,q(t,X_t^*))&=\E^{P'}\left[\tilde{\bar{\sigma}}^2(t,X_t^*,\xi_t^*,\Lambda_t^*)\middle|X_t^*\right]=\E^{P'}\left[\int_U\ts^2(t,X_t^*,\xi_t^*,u)\Lambda_t^*(\d u)\middle|X_t^*\right]\nonumber,\\
\int_U\tf(t,X_t^*,\xi_t^*,u)q(t,X_t^*)(\d u)&=\E^{P'}\left[\int_U\tf(t,X_t^*,\xi_t^*,u)\Lambda_t^*(\d u)\middle|X_t^*\right].
\end{align}
Applying the mimicking theorem in Corollary 3.7 of Brunick and Shreve \cite{Brunick}, we find another probability space $(\Omega^1,\F^1,\Fb^1,P^1)$ supporting a scalar standard $\Fb^1$-Brownian motion $W^1=(W^1_t)_{t\in [0,T]}$, a continuous $\Fb^1$-adapted real valued process $A^1=(A^1_t)_{t\in [0,T]}$, and an $\F_0^1$-measurable $\R$-valued $\RV$ $\eta^1$ with $\Law^{P'}(\eta^1)=\Law^{\Pb}(\eta)$ such that $\Law^{P^1}(X_t^1)=\Law^{P'}(X_t^*)=\Law^{P^*}(X_t)$ for all $t\in [0,T]$, where $X_t^1=\Gamma(A^1,Y^1)_t$ and $Y^1=(Y_t^1)_{t\in [0,T]}$ is the unique strong solution to the following SDE:
		\begin{align*}
			\d Y_t^1=\int_U\tb(t,X_t^1,\xi_t^*,u)q(t,X_t^1)(\d u)\d t+\tilde{\bar{\sigma}}(t,X_t^1,\xi_t^*,q(t,X_t^1))\d W_t,\quad Y_0^1=\eta^1.    
		\end{align*}
		Set $\boldsymbol{\xi}^1=P\circ (X^1,q(t,X_t^1)(\d u)\d t)^{-1}$ and $Q^*=P^1\circ(Y^1,q(t,X_t^1)(\d u)\d t,A^1)^{-1}$. By (B1) of \assref{ass2}, \equref{equivalence} and \equref{invariance}, we can follow the similar proof as the one for \lemref{measurable} that $Q^*\in R(\boldsymbol{\xi}^1)$ and $\mathcal{J}(Q^*;\boldsymbol{\xi}^1)=\mathcal{J}(P^*,\boldsymbol{\xi}^*)=\inf_{Q\in R(\boldsymbol{\xi}^1)}\mathcal{J}(Q,\boldsymbol{\xi}^1)$. Hence, $(\boldsymbol{\xi}^1,Q^*)$ is an (R)-MFE. The rest of proof follows from the measurable selection argument in \lemref{measurable} and Corollary 3.8 of \cite{Lacker}.
	\end{proof}

\ \\
\noindent\textbf{Acknowledgements} L. Bo and J. Wang are supported by National Natural Science of Foundation of China (No. 12471451), Natural Science Basic Research Program of Shaanxi (No. 2023-JC-JQ-05), Shaanxi Fundamental Science Research Project for Mathematics and Physics (No. 23JSZ010). X. Yu and J. Wang are supported by the Hong Kong RGC General Research Fund (GRF) under grant no. 15306523 and grant no. 15211524 and the Hong Kong Polytechnic University research grant under no. P0045654.	

\ \\

\end{document}